\documentclass{amsart}
\usepackage{amsfonts,amssymb,amsmath,amsthm,amscd}
\usepackage{fullpage}
\usepackage{url}
\usepackage{enumerate}
\usepackage{lmodern}

\urlstyle{sf}
\newtheorem{theorem}{Theorem}[section]
\newtheorem{lemma}[theorem]{Lemma}
\newtheorem{proposition}[theorem]{Proposition}
\newtheorem{corollary}[theorem]{Corollary}
\newtheorem{conjecture}[theorem]{Conjecture}

\theoremstyle{definition}

\newenvironment{definition}[1][Definition]{\begin{trivlist}
\item[\hskip \labelsep {\bfseries #1}]}{\end{trivlist}}
\newenvironment{remark}[1][Remark]{\begin{trivlist}
\item[\hskip \labelsep {\bfseries #1}]}{\end{trivlist}}

\author{Jeanine Van Order}
\address{Fakult\"at f\"ur Mathematik, Universit\"at Bielefeld, Postfach 100131, 33501 Bielefeld}
\email{jvanorder@math.uni-bielefeld.de}
\thanks{This research was supported in part by the Swiss National Science Foundation grant 200021-125291, 
as well as the DFG CRC 701 in the Fakult\"at f\"ur Mathematik at Bielefeld.}
\subjclass{Primary 11F67; Secondary 11G40, 11R23}
\begin{document}

\title{Rankin-Selberg $L$-functions in cyclotomic towers, I}

\begin{abstract}

We formulate and for the most part prove a conjecture in the style of Mazur-Greenberg for the nonvanishing of central values of Rankin-Selberg $L$-functions 
attached to elliptic curves in abelian extensions of imaginary quadratic fields. This in particular generalizes the theorem of Rohrlich on $L$-functions of elliptic 
curves in cyclotomic towers to the setting of abelian extensions of imaginary quadratic fields, corresponding to families of degree-four $L$-functions given by 
$\operatorname{GL}(2)\times\operatorname{GL}(2)$ Rankin-Selberg $L$-functions. It also generalizes the theorems of 
Rohrlich, Greenberg, Vatsal, and Cornut for $L$-functions of elliptic curves in $\operatorname{Z}_p^2$-extensions of imaginary quadratic fields. \\


\end{abstract}

\maketitle
\tableofcontents

\section{Introduction} 

We formulate and prove a conjecture in the style of Mazur-Greenberg for central values of Rankin-Selberg $L$-functions in the non self-dual setting, 
motivated by applications to bounding Mordell-Weil ranks in the setting of two-variable main conjectures of Iwasawa theory for elliptic curves without 
complex multiplication (see e.g.~\cite{CFKS}, \cite{MR}, \cite{PR88}, \cite{VO3}). Such applications are explained in the sequel work \cite{VO6}, along 
with how stronger results can be deduced from the existence of a suitable $p$-adic $L$-function (such as \cite{Hi} and \cite{PR88})
to generalize the theorems of Greenberg \cite{Gr}, Rohrlich \cite{Ro} \cite{Ro2}, Vatsal \cite{Va} and Cornut \cite{Cor}. 
The main purpose of this work is to consider the problem from an analytic point of view, and to derive estimates which 
should be of independent interest. In particular, we develop spectral decompositions of the shifted convolution sums 
and Weyl differencing for prime-power modulus in ways that should be applicable to study average central values of 
arbitrary $\operatorname{GL}(4)$-automorphic $L$-functions. 

Let $f$ be a holomorphic cuspidal eigenform of squarefree level $N$ and trivial nebentype character\footnote{In fact, 
we can work with an arbitrary Hecke-Maass eigenform $f$ of level prime to $pD$ for most of our arguments, 
but restrict to this setting (which is relevant to Shimura's rationality theorems and Iwasawa theory) for simplicity.}. 
We shall assume that $f$ is a newform of integral weight $l \geq 2$, with Fourier series expansion at infinity
\begin{align*} f(z) &= \sum_{n \geq 1} n^{\frac{(l-1)}{2}} \lambda(n) e^{2 \pi i n z}, ~~~~~~~z \in \mathfrak{H} \end{align*} 
normalized so that $\lambda(1)=1$. Let $K$ be an imaginary quadratic field of discriminant $D < 0$ prime to $N$ and 
associated quadratic Dirichlet character $\omega = \omega_D$. Fix an odd prime $p$ which is coprime to the product $ND$. 
Let $\mathcal{W}$ be a finite order Hecke character of $K$ which is ramified only at the given prime $p$.
More precisely, we shall consider Hecke characters $\mathcal{W}$ of the form 
\begin{align}\label{factorization}\mathcal{W} &= \rho \psi =\rho \chi \circ {\bf{N}}, \end{align} 
where $\rho$ is a ring class character of $K$ of $p$-power conductor, and $\psi = \chi \circ {\bf{N}}$ is
induced via composition with the norm ${\bf{N}}$ from a Dirichlet character $\chi$ of $p$-power conductor. 
Note that as functions on nonzero ideals $\mathfrak{a} \subset \mathcal{O}_K$, the Hecke characters in this 
decomposition $(\ref{factorization})$ can be characterized by the conditions $\rho(\overline{\mathfrak{a}}) = \overline{\rho(\mathfrak{a})}$ 
(i.e.~ring class characters are equivariant under complex conjugation) and $\psi (\mathfrak{a})= \chi ( {\bf{N}}\mathfrak{a})$
arises via composition with the norm from a Dirichlet character. 
Let us now fix such a Hecke character $\mathcal{W}$ of $K$ as in $(\ref{factorization})$, 
writing $c(\mathcal{W}) = c(\rho)c(\chi) \in {\bf{Z}}$ to denote its conductor. A classical construction 
due to Hecke associates to $\mathcal{W}$ a holomorphic theta series $\Theta(\mathcal{W})$ of weight $1$, level 
$\vert D \vert c(\mathcal{W})^2$ and nebentype character $\omega \mathcal{W} \vert_{\bf{Q}} = \omega\chi^2$. 
We consider the Rankin-Selberg $L$-function $L(s, f \times \mathcal{W})$ of $f$ times $\Theta(\mathcal{W})$,
whose completed $L$-function $\Lambda(s, f \times \mathcal{W})$ satisfies a functional equation of the form 
\begin{align*} \Lambda(s, f \times \mathcal{W}) &= \epsilon(1/2, f \times \mathcal{W}) \Lambda(1-s, f \times \overline{\mathcal{W}}). \end{align*}
Here, $\epsilon(1/2, f \times \mathcal{W}) \in {\bf{S}}^1$ is a complex number of modulus one known as the root number. 
If $\mathcal{W} = \rho$ is a ring class character of $K$, then it is easy to see that the coefficients in the Dirichlet series expansion of 
$L(s, f \times \mathcal{W})$ are real-valued, and hence that $\epsilon(1/2, f \times \mathcal{W})$ takes values in the set $\lbrace \pm 1 \rbrace$. 
In this setting, the $L$-function $L(s, f \times \mathcal{W})$ is said (for representation theoretic reasons) to be self-dual. 
Moreover, when $\mathcal{W} = \rho$ is a ring class character, it is easy to see that the functional equation  
is symmetric, relating values of the same $L$-function $\Lambda(s, f \times \rho)$ on either side (thanks to the equivariance under complex conjugation). 
In particular, when $\mathcal{W} = \rho$ is a ring class character, if $\epsilon(1/2, f \times \rho)$ equals $-1$ (as opposed to $+1$), 
then the central value $\Lambda(1/2, f \times \rho)$ is forced to vanish by the functional equation. 
Let us for future reference distinguish this particular case of forced vanishing from all others as follows: 

\begin{definition} We refer to a pair $(f, \mathcal{W})$ as {\it{exceptional}} if the Hecke character $\mathcal{W} = \rho$ 
is a ring class character and $\epsilon(1/2, f \times \mathcal{W}) = -1$, and as {\it{generic}} otherwise. \end{definition}

We seek to establish nonvanishing properties of the central derivative values $L'(1/2, f \times \rho)$ 
in the exceptional setting on $(f, \mathcal{W})$ and of the central values $L(1/2, f \times \mathcal{W})$ in the generic setting on $(f, \mathcal{W})$.
Here, we determine nonvanishing estimates in the latter setting which allow for $\mathcal{W} = \rho \psi$ to have both
ring class part $\rho$ and cyclotomic part $\psi = \chi \circ {\bf{N}}$ nontrivial, which fills a gap in the literature. 
Our results also generalize the theorems of Cornut \cite{Cor} and Vatsal \cite{Va} in the self-dual setting, as well as 
the older theorems of Greenberg \cite{Gr} and Rohrlich \cite {Ro}, \cite{Ro2}. 
Note that the proofs of Cornut and Vatsal use completely different methods, and in particular rely on the the special value formulae of 
Gross \cite{Gr}, Waldspuger \cite{Wa}, and Gross-Zagier \cite{GZ}, which (for representation theoretic reasons) 
cannot be extended to the generic non self-dual setting. 

The point of departure in the antecedent works of Greenberg \cite{Gr}, Rohrlich \cite{Ro} \cite{Ro2} and Vatsal \cite{Va} for central 
values is the following rationality theorem of Shimura \cite[Theorem 4]{Sh}. To describe this, let $F$ denote the extension of the rational 
number field ${\bf{Q}}$ obtained by adjoining the eigenvalues of $f$. Hence, $F$ is shown by Shimura to be a number field; in fact, 
$F$ is totally real if $f$ is not dihedral, and otherwise CM. Fix a Hecke character $\mathcal{W} = \rho \chi \circ {\bf{N}}$ as in $(\ref{factorization})$ 
above, with $\rho$ a primitive ring class character of conductor $p^{\alpha}$, and $\chi$ a primitive Dirichlet character of  conductor $p^{\beta}$, 
for some integers $\alpha, \beta \geq 0$. Let $F(\mathcal{W})$ denote the cyclotomic extension of $F$ obtained by adjoining the values of this 
character $\mathcal{W}$. Given a complex embedding of $F(\mathcal{W})$ fixing $F$, let $\mathcal{W}^{\sigma}$ denote the Hecke character 
defined on ideals $\mathfrak{a}$ of $K$ by $\mathfrak{a} \mapsto \mathcal{W}(\mathfrak{a})^{\sigma}$. 
Let $\langle f, f \rangle$ denote the Petersson norm of $f$. Shimura \cite[Theorem 4]{Sh} shows that the values 
\begin{align*} \mathcal{L}(1/2, f \times \mathcal{W}) &= L(1/2, f \times \mathcal{W}) /8 \pi^2 \langle f, f \rangle \end{align*} 
are algebraic, and more precisely that they lie in $F(\mathcal{W})$. Moreover, he proves these values are Galois conjugate 
in the sense that any automorphism $\sigma$ of $F(\mathcal{W})$ acts on the value $\mathcal{L}(1/2, f \times \mathcal{W})$ by the rule 
\begin{align*} \mathcal{L}(1/2, f \times \mathcal{W})^{\sigma} &= \mathcal{L}(1/2, f^{\tau}  \times \mathcal{W}^{\sigma}), \end{align*} 
where $\tau$ denotes the restriction of $\sigma$ to $F$. 
It follows $L(1/2, f \times \mathcal{W})$ vanishes if any only if $L(1/2, f \times \mathcal{W}^{\sigma})$ vanishes for each 
automorphism $\sigma$ of ${\operatorname{Aut}}(F(\mathcal{W}))$ fixing $F$. 
A similar notion of Galois conjugacy can be established in the exceptional 
setting via the formulae of Gross-Zagier \cite{GZ} and more generally Zhang \cite{Zh} 
or Yuan-Zhang-Zhang \cite{YZZ} for the central derivative values $L'(1/2, f \times \rho)$ (cf.~\cite[p.~385]{Ro2}), 
at least when $f$ has weight $l=2$. Thus, it can also be deduced in the exceptional setting 
on $(f, \mathcal{W})$ that the value $L'(1/2, f \times \rho)$ vanishes if any only if the value 
$L'(1/2, f \times \rho^{\sigma})$ vanishes for each complex embedding $\sigma$ of $F(\rho)$ fixing $F$. 
Let us therefore define, in either case on the generic root number $k \in \lbrace 0 , 1 \rbrace$, the associated {$k$-th Galois average 
\begin{align}\label{GA} \delta_{[\mathcal{W}]}^{(k)} &= [F(\mathcal{W}): F]^{-1} \sum_{\sigma} L^{(k)}(1/2, f \times \mathcal{W}^{\sigma}). \end{align} 
Here, the sum runs over embeddings $\sigma: F(\mathcal{W}) \rightarrow {\bf{C}}$ fixing $F$. As well, $L^{(0)}(1/2, f \times \mathcal{W})$ 
denotes the central value $L(1/2, f \times \mathcal{W})$, and $L^{(1)}(1/2, f \times \mathcal{W})$ the derivative value $L'(1/2, f \times \mathcal{W})$. 
We make the following conjecture in the spirit of Mazur, Greenberg \cite{Gr} and Coates-Fukaya-Kato-Sujatha \cite{CFKS} 
for these averages, of which the theorems of Greenberg \cite{Gr}, Rohrlich \cite{Ro}, \cite{Ro2}, Vatsal \cite{Va} and 
Cornut \cite{Cor} would be special cases.

\begin{conjecture}\label{MC} 

Let $\mathcal{W} = \rho \psi = \rho \chi \circ {\bf{N}}$ be a Hecke character of $K$ of finite order as in $(\ref{factorization})$ above. 
Let $k=1$ if the pair $(f, \mathcal{W})$ is exceptional, or else $k=0$ if the pair $(f, \mathcal{W})$ is generic. If $c(\mathcal{W})$ is sufficiently large, 
then $\delta_{[\mathcal{W}]}^{(k)} \neq 0$. Equivalently, for all but finitely many such $\mathcal{W}$, the value $L^{(k)}(1/2, f \times \mathcal{W})$ 
does not vanish.

\end{conjecture} 

We establish the following results in the direction of this conjecture, starting with estimates for the following larger averages. 
Fix a Hecke character $\mathcal{W} = \rho \psi = \rho \chi \circ {\bf{N}}$ as in $(\ref{factorization})$ above, with $\rho$ a primitive ring 
class character of conductor $p^{\alpha}$ for some integer $\alpha \geq 0$, and $\chi$ a primitive even\footnote{We average over even
characters as the corresponding ``archimedean component" of the Hecke character corresponding to $\chi \circ {\bf{N}}$ is then trivial, 
and so the archimedean components of the $L$-function do not depend on the choice of character. That is, we average over a 
family of wide ray class characters of $K$, which is simpler for the approximate functional equation below.} Dirichlet character of conductor 
$p^{\beta}$ for some integer $\beta \geq 0$. Writing $k \in \lbrace 0, 1 \rbrace$ as above to denote the condition on the generic root number
$\epsilon(1/2, f \times \mathcal{W})$ for the pair $(f, \mathcal{W})$, we first consider the weighted average of $L$-values 
\begin{align}\label{harmonic} H^{(k)}(\alpha, \beta) &= \frac{2}{\#\operatorname{Pic}(\mathcal{O}_{p^{\alpha}}) \varphi^{\star}(p^{\beta})} 
\sum_{\rho \in \operatorname{Pic}(\mathcal{O}_{p^{\alpha}})^{\vee}} 
\sum\limits_{ \chi \operatorname{mod} p^{\beta} \atop \chi(-1)=1, \operatorname{primitive} } L^{(k)}(1/2, f \times \rho \chi \circ {\bf{N}}). \end{align}
Here, $\operatorname{Pic}(\mathcal{O}_{p^{\alpha}})$ denotes the class group of the order 
$\mathcal{O}_{p^{\alpha}} = {\bf{Z}} + p^{\alpha} \mathcal{O}_K$ of conductor $p^{\alpha}$ in $\mathcal{O}_K$, so
that the character group of $\operatorname{Pic}(\mathcal{O}_{p^{\alpha}})^{\vee}$ of $\operatorname{Pic}(\mathcal{O}_{p^{\alpha}})$ 
contains all of the ring class class characters $\rho$ of conductor $p^{\alpha}$. We also write $\varphi^{\star}(p^{\beta})$ to denote the 
number of primitive Dirichler characters $\chi \operatorname{mod} p^{\beta}$, where the second sum is taken over such characters. 
Given $s \in {\bf{C}}$ (first with $\Re(s) > 1$), we can then define the symmetric square $L$-functions of $f$ by the Dirichlet series 
\begin{align}\label{symm} L(s, \operatorname{Sym^2} f) &= \zeta(2s) \sum_{n \geq 1} \lambda(n^2) n^{-s} \end{align} 
and also that for the corresponding twist by a Dirichlet character $\chi$, 
\begin{align*} L(s, \operatorname{Sym^2} f \otimes \chi) &= L(2s, \chi^2) \sum_{n \geq 1} \lambda(n^2) \chi(n^2) n^{-s} \end{align*} 
These $L$-functions have well-known analytic continuations to the complex plane, with the Euler factors at ramified places described 
by Shimura. We refer to \cite{CM} and \cite{GHL} for more on the analytic properties and bounds satisfied by these $L$-functions. 
Let \begin{align*}\gamma = \lim_{x \rightarrow \infty} \left( \sum_{x \leq n} \frac{1}{n} - \log x \right)\end{align*} 
denote the Euler-Mascheroni constant. Given an integer $M \geq 1$ and an $L$-function $L(s)$, let us also write $L^{(M)}(s)$ to denote 
the $L$-function determined by $L(s)$ after removal of the Euler factors at primes dividing $M$. Let $w = w_K$ denote the number 
of roots of unity in $K$. We first prove the following result. And finally, let us write $q_1 = q_1(x, y)$ to denote the binary quadratic form 
associated to the principal class in $\operatorname{Pic}(\mathcal{O}_K)$. Explicitly, 
\begin{align}\label{qf} q_1(x, y) &= \begin{cases} x^2 - \frac{ D }{4} y^2 &\text{ if $D \equiv 0 \bmod 4 $} \\
x^2 + xy + \left( \frac{1-D}{4} \right) y^2 &\text{ if $D \equiv 1 \bmod 4$}.\end{cases} \end{align}

\begin{theorem}\label{h} 

Let $f$ be a holomorphic eigenform of even weight $l \geq 2$, squarefree level $N$ and trivial nebentype character.
Fix $K$ an imaginary quadratic field of discriminant $D<0$ and quadratic Dirichlet character $\omega = \omega_D$.    
Let $p$ be an odd prime, and assume that $(p, N D) = (N, D) = 1$. Fix integers $\alpha \geq 0$ and $\beta \geq 4$. 
We prove the following estimates in either case on the generic root number $k \in \lbrace 0, 1 \rbrace$: \\

\begin{itemize}

\item[(i)](Theorem \ref{SDave} (i)). If $k= 0$ and $f$ is not dihedral, then for some constant $\eta_0 >0$ we have that  
\begin{align*} H^{(0)}(\alpha, 0) &=  \frac{4}{w} \cdot L(1, \omega) \cdot \frac{L^{(p^{\alpha})}(1, \operatorname{Sym^2} f)}{\zeta^{(p^{\alpha})} (2)} 
+ O \left( (N \vert D \vert p^{2 \alpha})^{-\eta_0} \right). \end{align*}
In particular, if $\alpha$ (or even $N \vert D \vert p^{2 \alpha}$) is sufficiently large, then the average $H^{(0)}(\alpha, 0)$ does not vanish. \\

\item[(ii)](Theorem \ref{SDave} (ii)). If $k=1$ and $f$ is not dihedral, then for some constant $\eta_1 >0$ we have that 
\begin{align*} H^{(1)}(\alpha, 0) =  \frac{4}{w} \cdot L(1, \omega) &\cdot \frac{L^{(p^{\alpha})}(1, \operatorname{Sym^2} f )}{\zeta^{(p^{\alpha})}(2)} 
\left[ \frac{1}{2}\log(N\vert D \vert p^{2 \alpha}) + \frac{L'}{L}(1, \omega)  \right. \\  & \left. 
+ \frac{L'^{(p^{\alpha})}}{L^{(p^{\alpha})}}(1, \operatorname{Sym^2} f) - \frac{\zeta'^{(p^{\alpha})}}{\zeta^{(p^{\alpha})}}(2) 
- \gamma- \log 2 \pi \right] + O((N \vert D \vert p^{2 \alpha})^{-\eta_1} ). \end{align*}
In particular, if $\alpha$ (or even $N \vert D \vert p^{2 \alpha}$) is sufficiently large, then the average $H^{(1)}(\alpha, 0)$ does not vanish. \\

\item[(iii)](Theorem \ref{D1tilde}, Proposition \ref{D1}, Theorem \ref{cycharm} (i)). If $k =0$ with $\alpha \geq 0$ and $\beta \geq 4$, then we have  
\begin{align*} H^{(0)}(\alpha, \beta) &= \frac{2}{w} \left( 1 + \frac{\lambda(q_1(0,1))}{q_1(0,1)^{\frac{1}{2}}} \right)
\cdot \frac{2}{\varphi^{\star}(p^{\beta})} \sum_{\chi \operatorname{mod} p^{\beta} \atop \chi(-1)=1 \operatorname{primitive}} 
L(1, \omega \chi^2) \cdot \frac{ L(1, \operatorname{Sym^2} f \otimes \chi^2) }{L(2, \chi^2)} \\
&+ O_{\varepsilon} \left( (N \vert D \vert p^{2 \alpha})^{\frac{1}{2} + \varepsilon} (p^{\beta})^{-\frac{1}{16} + 30 \varepsilon} \right). \end{align*}
In particular, the average $H^{(0)}(\alpha, \beta)$ does not vanish if the exponent $\beta \geq 4$ is sufficiently large. 

\end{itemize}

\end{theorem} 

\begin{remark}[Two remarks.] (i) It is easy to deduce from Theorem \ref{h} that $H^{(k)}(\alpha, 0) \neq 0$ for $\alpha$ sufficiently large.
Here, we use that the symmetric square values $L^{(N)}(1, \operatorname{Sym^2} f \otimes \chi)$ are nonvanishing, 
and bounded below by $1$ in the style of Goldfeld-Hoffstein-Lieman \cite{GHL}, cf.~\cite[Lemma 4.2]{CM}.
Since $\omega$ is an odd Dirichlet character, we can also use Colmez \cite[Proposition 5]{Col} to derive a lower bound
\begin{align*} \frac{1}{2}\log (N \vert D \vert p^{2(\alpha + \beta)}) + \frac{L'}{L}(1, \omega)  \gg \frac{1}{2} 
\log \left( N \vert D \vert p^{2(\alpha + \beta)} \right) \gg \log p^{\alpha} + O_{f, D}(1). \end{align*} 
In the more general setting of $H^{(0)}(\alpha, \beta)$, we give a contour argument in Theorem \ref{cycharm}
to deduce that the corresponding residual term in the estimate stated above converges to a nonzero constant with $\beta$. 
(ii) The condition $\beta \geq 4$ is made to simplify the explicit evaluation of hyper-Kloosterman sums of dimension $4$ 
in the style of Sali\'e (Proposition \ref{salie}), which we then use to relate to exponential sums of $p$-adic phase which can be bounded using the 
Weyl differencing method (see Proposition \ref{DtildeWS} and Theorem \ref{D1tilde}). \end{remark}

Using the same ideas that appear in the proofs of the Theorems \ref{SDave} and \ref{cycharm} (especially Theorem \ref{SDerror}) below, 
we also obtain the following refinement of Theorem \ref{h} for the Galois averages $\delta_{[\mathcal{W}]}^{(k)}$ introduced in Conjecture 
\ref{MC} above. To show the main estimate that we derive, let us retain the setup leading to the definition $(\ref{harmonic})$ of the averages 
$H^{(k)}(\alpha, \beta)$ above. Fixing $\alpha \geq 1$, let $1 \leq x \leq \alpha$ be an integer, and consider the subaverages defined over all 
ring class characters $\rho$ of exact order $p^x$: 
\begin{align*} \Gamma^{(k)}(\alpha(x)) 
&= \frac{1}{\varphi(p^x)} \sum_{\rho \in \operatorname{Pic}(\mathcal{O}_{p^{\alpha}})^{\vee} \atop \rho^{p^x} ={\bf{1}} } 
L^{(k)}(1/2, f \times \rho) \end{align*} in the self-dual setting (with $\beta =0$), and more generally 
\begin{align*} \Gamma(\alpha(x), \beta) = \Gamma^{(0)}(\alpha(x), \beta) &= 
\frac{2}{\varphi(p^x) \varphi^{\star}(p^{\beta})} \sum_{\rho \in \operatorname{Pic}(\mathcal{O}_{p^{\alpha}})^{\vee} \atop \rho^{p^x} ={\bf{1}} } 
\sum_{\chi \bmod \chi(-1)=1, p^{\beta} \atop \operatorname{primitive}} L^{(k)}(1/2, f \times \rho \chi \circ {\bf{N}}) \end{align*}
in the generic, non-self-dual setting (with $\beta \geq 4$ and $k=0$). Let us write 
\begin{align*} G(\alpha)^{p^x} = \operatorname{Pic}(\mathcal{O}_{p^{\alpha}})^{p^x} 
&= \left\lbrace A^{p^x}: A \in \operatorname{Pic}(\mathcal{O}_{p^{\alpha}}) \right\rbrace
\end{align*} to denote the subgroup of $p^x$-th powers. Given a class $A \in \operatorname{Pic}(\mathcal{O}_{p^{\alpha}})$,
let us also write $q_A(x, y)$ to denote the corresponding positive definite quadratic form, i.e.~as used to define theta series whose 
Mellin transform gives the associated partial Dedekind $L$-function for the ring class extension of conductor $p^{\alpha}$ of $K$.
 
\begin{theorem} Keep the setup of Theorem \ref{h} above. Fix integers $\alpha \geq 1$, $1 \leq x \leq \alpha$, and $\beta \geq 4$. \\

\begin{itemize}

\item[(i)](Theorem \ref{SDGA} (i)). If $k= 0$, then for some constant $\eta_0 >0$ we have  
\begin{align*} \Gamma^{(0)}(\alpha(x)) &=  \frac{4}{w} \cdot L(1, \omega) \cdot \frac{L^{(p^{\alpha})}(1, \operatorname{Sym^2} f)}{\zeta^{(p^{\alpha})} (2)} 
+ O \left( (N \vert D \vert p^{2 \alpha})^{-\eta_0} \right). \end{align*}
In particular, if $\alpha \geq 1$ is sufficiently large, then the average $\Gamma^{(0)}(\alpha(x))$ does not vanish. \\

\item[(ii)](Theorem \ref{SDGA} (ii)). If $k=1$, then for some constant $\eta_1 >0$ we have 
\begin{align*} \Gamma^{(1)}(\alpha(x)) =  \frac{4}{w} \cdot L(1, \omega) &\cdot \frac{L^{(p^{\alpha})}(1, \operatorname{Sym^2} f )}{\zeta^{(p^{\alpha})}(2)} 
\left[ \frac{1}{2}\log(N\vert D \vert p^{2 \alpha}) + \frac{L'}{L}(1, \omega)  \right. \\  & \left. 
+ \frac{L'^{(p^{\alpha})}}{L^{(p^{\alpha})}}(1, \operatorname{Sym^2} f) - \frac{\zeta'^{(p^{\alpha})}}{\zeta^{(p^{\alpha})}}(2) 
- \gamma- \log 2 \pi \right] + O((N \vert D \vert p^{2 \alpha})^{-\eta_1} ). \end{align*}
In particular, if $\alpha \geq 1$ is sufficiently large, then the average $\Gamma^{(1)}(\alpha(x))$ does not vanish. \\

\item[(iii)](Theorem \ref{NSDGA}). If $k =0$ with $\alpha \geq 1$ and $\beta \geq 4$, then we have 
\begin{align*} \Gamma(\alpha(x), \beta) &= \frac{2}{w} \prod_{A \in G(\alpha)^{p^x}} \left( 1 + \frac{\lambda(q_A(0,1))}{q_A(0,1)^{\frac{1}{2}}} \right)
\cdot \frac{2}{\varphi^{\star}(p^{\beta})} 
\sum_{\chi \operatorname{mod} p^{\beta} \atop \operatorname{primitive}} 
L(1, \omega \chi^2) \cdot \frac{ L(1, \operatorname{Sym^2} f \otimes \chi^2) }{L(2, \chi^2)}  \\
&+ O_{\varepsilon} \left( \varphi(p^x) (N \vert D \vert p^{2 \alpha})^{\frac{1}{2} + \varepsilon} (p^{\beta})^{- \frac{1}{16} + 30 \varepsilon} \right). \end{align*}
In particular, the average $\Gamma(\alpha(x), \beta)$ does not vanish if $\beta \geq 4$ is sufficiently large. 

\end{itemize} \end{theorem} 

This goes some way towards showing Conjecture \ref{MC}. In particular, 
we obtain the following immediate application of Shimura's algebraicity theorem \cite{Sh}
in the case of central values (corresponding to $k=0$), together with the central derivative value formulae of Gross-Zagier \cite{GZ}, Zhang \cite{Zh}, and 
Yuan-Zhang-Zahn \cite{YZZ} in the case of central derivative values (corresponding to our exceptional case of $k=1$). 

\begin{theorem} Let $x \geq 0$ be any integer. Let $\rho$ be a primitive ring class character of conductor 
$p^{\alpha}$ for some integer $\alpha \geq 0$ and exact order $p^x$. Assume that the absolute discriminant $\vert D \vert$ of $K$ is sufficiently large. 

\begin{itemize}

\item[(i)] There exists a constant $x_0 \geq 0$
such that for all ring class characters $\rho$ of $K$ of exact order $p^x$ with $x \geq x_0$, the value $L^{(k)}(1/2, f \times \rho)$ does not vanish. 

\item[(ii)] There exists a constant $\beta_0 = \beta_0(x) \geq 4$ depending on $x$ such that 
for each $\beta \geq \beta_0$, the Galois average $\delta^{(0)}_{[\rho \chi \circ {\bf{N}}]}$ does not vanish.
Hence (by Shimura's theorem), for each primitive Dirichlet character $\chi \operatorname{mod} p^{\beta}$ with $\beta \geq \beta_0$, 
the central value $L(1/2, f \times \rho \chi \circ {\bf{N}})$ does not vanish. \end{itemize} \end{theorem}

Again, we remark that the artificial-looking constraint on $\beta$ in the second claim above stems from the fact that we evaluate hyper-Kloosterman 
sums in the style of Sali\'e to derive our our estimates in this setting, and is therefore natural for this approach. However, it is also possible to develop
the approach we take via spectral decompositions of shifted convolution sums for the self-dual estimates leading to (i) to deal with nontrivial nebentype
character (to estimate central values, but not central derivative values), as well as to remove the assumption that $\vert D \vert \gg 1$. 
This latter approach is developed in the more general companion work \cite{VO7}. 
Finally, we explain the natural applications of these estimates to Iwasawa main conjectures and Mordell-Weil ranks in the sequel note \cite{VO6}.

\subsubsection*{Acknowledgements} 

I am grateful to Philippe Michel, as well as to Valentin Blomer, Farrell Brumley, Gergely Harcos, Djordje Milicevic, Paul Nelson, David Rohrlich, 
Peter Sarnak, and Nicolas Templier for various helpful discussions related to this work (at various points). I am also grateful to Christophe 
Cornut and Nike Vatsal for encouragement, as well as to John Coates and Barry Mazur for their interest. Finally, I should like to thank the anonymous 
referees for helpful constructive criticism (and pointing out of gaps) in previous versions of this work, as well as to apologize for the long delay in its completion. 
 
\subsubsection*{Notations and conventions} 

Throughout, we shall fix an imaginary quadratic field $K$ of discriminant $D < 0$ and ring of integers $\mathcal{O}_K$. 
Given an integer $\alpha \geq 0$, we shall write $\mathcal{O}_{p^{\alpha}} = {\bf{Z}} + p^{\alpha} \mathcal{O}_K$ to denote
the ${\bf{Z}}$-order of conductor $p^{\alpha}$ in $K$, with $\operatorname{Pic}(\mathcal{O}_{p^{\alpha}})$ its class group. 
The notation $\rho$ is reserved to denote a ring class character in this setup, so some character of the finite group 
$\operatorname{Pic}(\mathcal{O}_{p^{\alpha}})$ for some $\alpha \geq 0$. We shall also use the notation $\chi$ to 
denote a primitive Dirichlet character of conductor $p^{\beta}$ for some integer $\beta \geq 1$, with $\varphi^{\star}(p^{\beta})$ 
denoting the number of such characters (hence $\varphi^{\star}(p^{\beta}) = (p-1)^2 p^{\beta-2}$ if $\beta \geq 2$). 
Note as well that we write
\begin{align*} \sum_{\rho \in \operatorname{Pic}(\mathcal{O}_{p^{\alpha}})^{\vee}}\end{align*} to denote the sum of all ring 
class characters $\rho$ of $\operatorname{Pic}(\mathcal{O}_{p^{\alpha}})$ in the anticyclotomic direction. 
We also write \begin{align*} \sum_{\chi \bmod p^{\beta} \atop \chi(-1) \operatorname{primitive}}\end{align*} 
to denote the sum over primitive even Dirichlet characters $\chi \operatorname{mod} p^{\beta}$ in the cyclotomic direction, 
noting there that we must in fact take the primitive characters (see Proposition \ref{RT} below). Again, we average over even
Dirichlet characters as these have trivial ``archimedean component", and so do not alter the archimedean component
of the corresponding Rankin-Selberg $L$-function $\Lambda(s, f \times \rho \chi \circ {\bf{N}}) = L_{\infty}(s) L(s, f \times \rho \chi \circ {\bf{N}})$.
 
\section{Some background}

We now give some background on the Rankin-Selberg $L$-functions $L(s, f \times \mathcal{W}) = L(s, f \times \theta(\mathcal{W}))$, 
which we shall take for granted later. Fix a prime number $p$ which is coprime to $D N$, 
and let $\mathcal{W} = \rho \chi \circ {\bf{N}} \in X_{\alpha, \beta}$ denote 
a Hecke character of the form described above, with $\rho$ a ring class character of $K$ of conductor $p^{\alpha}$ for some
integer $\alpha \geq 0$ (hence a class group character if $\alpha = 0$), and $\chi$ a Dirichlet character of
conductor $p^{\beta}$ for some integer $\beta \geq 0$. A classical construction due to Hecke associates to any such character
$\mathcal{W}$ a theta series $\theta(\mathcal{W})$ of weight one, level $\vert D \vert p^{2(\alpha + \beta)}$, 
and nebentype character $\mathcal{W} \vert_{\bf{Q}}^{\times} = \omega \chi^2$.

\subsection{Dirichlet series expansions} 

We consider the Rankin-Selberg $L$-function $L(s, f \times \mathcal{W})$ of $f$ times $\theta(\mathcal{W})$ in the setup described above, 
which for $\Re(s) > 1$ has the Dirichlet series expansion 
\begin{align}\label{idealDirichlet} L(s, f \times \mathcal{W}) &= L^{(N)}(2s, \omega\chi^2) \sum_{\mathfrak{a} \subseteq \mathcal{O}_K} 
\mathcal{W}(\mathfrak{a}) \lambda({\bf{N}}\mathfrak{a}) {\bf{N}}\mathfrak{a}^{-s}. \end{align} 
Here, the sum runs over nonzero integral ideals $\mathfrak{a} \subset \mathcal{O}_K$, with the convention that 
$\mathcal{W}(\mathfrak{a})=0$ for ideals $\mathfrak{a}$ whose image ${\bf{N}}(\mathfrak{a})$ under the norm map ${\bf{N}}$ are not prime to the conductor 
$c(\mathcal{W})$, and $L^{(N)}(s, \omega\chi^2)$ denotes the Dirichlet $L$-function $L(s, \omega\chi^2)$ with Euler factors at primes dividing $N$ removed. 
Since $\rho$ is a ring class character of conductor $p^{\alpha}$, 
we can identify $\rho$ as a character of the class group $\operatorname{Pic}(\mathcal{O}_{p^{\alpha}})$ of the ${\bf{Z}}$-order 
$\mathcal{O}_{p^{\alpha}} = {\bf{Z}} + p^{\alpha} \mathcal{O}_K$ of conductor $p^{\alpha}$ in $K$. 

Given an integer $n \geq 1$ and a class $A \in \operatorname{Pic}(\mathcal{O}_{p^{\alpha}})$, 
let $r_A(n)$ denote the number of ideals of norm $n$ in $A$. 
Expanding out the second sum in $(\ref{idealDirichlet})$ with respect to these counting functions, 
we have the well-known equivalent Dirichlet series expansion over integers
\begin{align}\label{integralDirichlet} L(s, f \times \mathcal{W}) 
&= \sum_{m \geq 1 \atop (m, N)=1} \frac{\omega\chi^2(m)}{m^{2s}} \sum_{n \geq 1} \left( \sum_{A} \rho(A)r_A(n) \right) \frac{\lambda(n)\chi(n)}{n^{s}} \end{align} 
where the third sum runs over all classes $A \in \operatorname{Pic}(\mathcal{O}_{p^{\alpha}})$ of the order $\mathcal{O}_{p^{\alpha}}$. 
Note that to justify the equivalence of $(\ref{idealDirichlet})$ and $(\ref{integralDirichlet})$ properly, 
one can compare Euler factors in each of the corresponding completed $L$-functions 
(the former coming from an $L$-function of degree $2$ over $K$, and the latter an $L$-function of degree $4$ over ${\bf{Q}}$).

Let us now consider the counting functions $r_A(n)$ for $A \in \operatorname{Pic}(\mathcal{O}_{p^{\alpha}})$, 
with emphasis on the counting function $r_{\bf{1}}(n)$ corresponding to the trivial class ${\bf{1}} \in \operatorname{Pic}(\mathcal{O}_{p^{\alpha}})$. 
Observe that whenever $\alpha \beta \geq 1$ in our setup above, the Dirichlet series expansion $(\ref{idealDirichlet})$ is given by a sum over 
nonzero ideals $\mathfrak{a} \subset \mathcal{O}_K$ of absolute norm is prime to $p$. Now it is a classically-known fact, 
as explained in \cite[Proposition 7.20]{Cox} for instance, that $r_{{\bf{1}}}(n) = r_1(n)$ for any integer $n$ prime to $p$, 
where $r_1(n)$ denotes the number of principal ideals in the class group $\operatorname{Pic}(\mathcal{O}_K)$ of $K$ of norm equal to $n$. 
(An analogous statement holds for any $A \in \operatorname{Pic}(\mathcal{O}_{p^{\alpha}})$).
This in particular means that we shall only have to consider the counting function $r_1(n)$ associated to the maximal order $\mathcal{O}_K$ later on. 
Let us lighten notations by writing $r(n) = r_1(n) = r_{\bf{1}}(n)$ to denote this counting function (defined on integers $n \geq 1$ prime to $p$). 
Then, the counting function $r(n)$ can be parametrized in terms of the representations of $n$ by $q_1$ as
\begin{align}\label{count} r(n) &= \frac{1}{w} \cdot \# \left\lbrace (a, b) \in {\bf{Z}}^2: ~ q_1(a, b)= n \right\rbrace, \end{align} 
where $w$ denotes the number of automorphs of $q_1$, equivalently the number of roots of unity in $K$.

\begin{remark} Observe that we could also parametrize $r(n)$ after fixing an integral basis $[1, \vartheta]$ of $\mathcal{O}_K$ as  
\begin{align*} r(n) &= \frac{1}{w} \cdot \# \left\lbrace (a, b) \in {\bf{Z}}^2: ~ a^2 - b^2 \vartheta^2 = n \right\rbrace. \end{align*} 
Although it is typically harmless, one often finds a standard abuse of notation in the literature where 
$\vartheta^2$ is simply replaced with the fundamental discriminant $D$ (e.g.~\cite[$\S 7$]{Te}, \cite{TeT}). 
That is, one can always expand out in terms of a basis this way, then use that $\vartheta^2 = O(D)$ to justify the (technically incorrect) definition  
\begin{align*} r(n) &= \frac{1}{w} \cdot \# \left\lbrace (a, b) \in {\bf{Z}}^2: ~ a^2 - b^2 D = n \right\rbrace \end{align*} 
for various estimates. Either way, this counting function plays an important role in our subsequent estimates, 
allowing us to use spectral decompositions of shifted convolution sums to estimate our off-diagonal terms. \end{remark}

\subsection{Functional equations} 

Keeping with the setup above, we shall take for granted the following important result throughout. 
The result is attributed to Rankin (and Selberg) and Shimura, but the exact form of the root number 
has only been established more recently thanks to the works of \cite{JL}, \cite{Ja}, and \cite{Li}.

\begin{proposition}\label{FE} 

Fix $f$ a normalized newform of weight $2$, level $N$, and trivial nebentype character. 
Fix $K$ an imaginary quadratic field of discriminant $-D$ prime to $N$ and character $\omega = \omega_D$.
Fix a prime $p$ coprime to $ND$, and let $\mathcal{W} = \rho \chi \circ {\bf{N}} \in X_{\alpha, \beta}$ be a Hecke character of 
$K$ of the form described above, i.e.~with $\rho$ a ring class character of conductor $p^{\alpha}$ and $\chi$ a Dirichlet character
of conductor $p^{\beta}$ (with $\alpha, \beta \in {\bf{Z}}_{\geq 0}$ and $\alpha \beta \geq 1$). 
The Rankin-Selberg $L$-function $L(s, f \times \mathcal{W}) = L(s, f \times \theta(\mathcal{W}))$ of $f$ times 
$\theta(\mathcal{W}) \in S_1(\Delta, \xi) = S_1(\vert D \vert p^{2(\alpha + \beta)}, \omega \chi^2)$ has an analytic continuation to ${\bf{C}}$, 
and its completed $L$-function 
\begin{align*} \Lambda(s, f \times \mathcal{W}) &:= (N\Delta)^s  \Gamma_{\bf{R}}(s+1/2)\Gamma_{\bf{R}}(s + 3/2) L(s, f \times \mathcal{W}), 
 ~~~~~\Gamma_{\bf{R}}(s) := \pi^{- \frac{s}{2}} \Gamma \left( \frac{s}{2} \right) \end{align*} 
satisfies the functional equation 
\begin{align*} \Lambda(s, f \times \mathcal{W}) &= \epsilon(1/2, f \times \mathcal{W}) \Lambda(1-s, f \times \overline{\mathcal{W}}).\end{align*} 
Here, the root number $\epsilon(1/2, f \times \mathcal{W}) \in {\bf{S}}^1$ is given explicitly by  the formula
\begin{align*} \epsilon(1/2, f \times \mathcal{W}) &= \omega\chi^2(- N) \left( \frac{\tau(\omega \chi^2)^2}{\vert D \vert p^{\beta}} \right)^2,\end{align*} 
where 
\begin{align*} \tau(\omega\chi^2) &= \sum_{x \operatorname{mod} \vert D \vert p^{\beta}} \omega \chi^2(x) e \left( \frac{x}{ \vert D \vert p^{\beta} }\right)\end{align*} 
denotes the Gauss sum of the Dirichlet character $\omega\chi^2$. 
To be explicit, writing $L_{\infty}(s) := \Gamma_{\bf{R}}(s + \frac{1}{2}) \Gamma_{\bf{R}}(s + \frac{3}{2})$
we have for a given Hecke character $\mathcal{W} = \rho \chi \circ {\bf{N}} \in X_{\alpha, \beta}$ the functional equation
\begin{align}\label{explicitFE} L_{\infty}(s) L(s, f \times \rho \chi \circ {\bf{N}})  &=
\omega \chi^2(-N) \cdot  \frac{\tau(\omega \chi^2)^4}{(\vert D \vert p^{\beta})^2} \cdot (N \vert D \vert p^{2 (\alpha + \beta)})^{1-2s} 
L_{\infty}(1-s) L(1-s, f \times \rho \overline{\chi} \circ {\bf{N}}) . \end{align}
\end{proposition}

\begin{proof} 

See \cite[Theorem 2.2, and Example 2]{Li}. Here, we have used the automorphic normalization, so that $s=1/2$ is the point
of symmetry for the functional equation, and that $\overline{f} = f$. We have also used that $\omega \chi^2$ has conductor 
$\vert D \vert p^{\beta}$, that the Hecke $L$-function $L(s, \mathcal{W}) = L(s, \theta(\mathcal{W}))$ has root number 
$\tau(\omega \chi^2)^2/(\vert D \vert p^{\beta})$, and that ring class characters are equivariant under complex conjugation 
(cf.~the discussion in \cite{Ro2}). \end{proof}
 
\subsection{Approximate functional equations} 

Fix $k \in \lbrace 0, 1 \rbrace$. We now use an approximate functional equation argument to derive an expression for the values 
$L^{(k)}(1/2, f \times \mathcal{W})$, i.e.~outside of the range or absolute convergence for the corresponding Dirichlet series.
Suppose more generally that $f$ is any modular form of level $N$, and that $\mathcal{W}$ is any Hecke character of $K$ 
with theta series $\theta(\mathcal{W})$ of level $\Delta$ prime to $N$. Given $s \in {\bf{C}}$ with $\Re(s) > 0$, 
let us momentarily lighten notations by writing
\begin{align}\label{basicDirichlet} L(s, f \times \mathcal{W})  & = \sum_{n \geq 1} \frac{a_{f \times \mathcal{W}}(n)}{n^s} \end{align} 
to denote the Dirichlet series expansion of the Rankin-Selberg $L$-function $L(s, f \times \mathcal{W}) = L(s, f \times \theta(\mathcal{W}))$ 
of $f$ times the theta series associated to a Hecke character $\mathcal{W}$. 
Fix $g \in \mathcal{C}_c^{\infty}({\bf{R}}_{>0})$ any smooth and compactly supported function on $y \in {\bf{R}}_{>0}$ 
whose Mellin transform $g(s) := \int_0^{\infty} g(y) y^{s} \frac{dy}{y}$ satisfies the condition $g^*(0) = 1$. 
Let us for a given integer $m \geq 1$, write $G_m$ to denote the meromorphic function defined on a 
complex variable $s$ by $G_m(s) = g^*(s) s^{-m}$. Hence, $G_m(s)$ is then holomorphic except at $s = 0$ 
(where it behaves like $s^{-m}$), and bounded in vertical strips.
Let us also suppose that $g^*(s)$ is even, so that $G_m(-s) = (-1)^m G_m(s)$ for all $s \neq 0$.
Write $V_m \in \mathcal{C}^{\infty}$ to denote the smooth and rapidly decaying function defined on $y \in {\bf{R}}_{>0}$ by 
\begin{align}\label{V} V_m(y) &= \int_{\Re(s)=2} \widehat{V}_m(s)y^{-s}\frac{ds}{2 \pi i }, \end{align}  
where $\widehat{V}_m(s)$ denotes the function defined on $s \in {\bf{C}} - \lbrace 0 \rbrace$ by 
\begin{align}\label{Vhat} \widehat{V}_m(s) &= \frac{L_{\infty}(s + 1/2)}{L_{\infty}(1/2)} G_m(s) =  \pi L_{\infty}(s+1/2) \frac{g^*(s)}{s^m}. \end{align} 
To be clear, recall that we define $L_{\infty}(s) = \Gamma_{\bf{R}} (s + 1/2)\Gamma_{\bf{R}}(s + 3/2)$, and note that $L_{\infty}(1/2) = \pi^{-1}$. 

\begin{lemma}\label{cvformula} 

Fix $k \in \lbrace 0, 1 \rbrace$. Then, for any choice of $Z >0$, the value $ L^{(k)}(1/2, f \times \mathcal{W})$ is given by
\begin{align}\label{value} \sum_{n \geq 1} \frac{a_{f \times \mathcal{W}}(n)}{n^{\frac{1}{2}}} V_{k+1}\left( \frac{ Z n }{ N \Delta } \right) -(-1)^{k+1}
\epsilon(1/2, f \times \mathcal{W}) \sum_{n \geq 1} \frac{a_{f \times \overline{\mathcal{W}}}(n)}{n^{\frac{1}{2}}} V_{k+1}\left( \frac{n}{Z N \Delta} \right).\end{align} 
Note that in the exceptional case of $k=1$, the root number is $\epsilon(1/2, f \times \rho) = \omega(-N) = - \omega(N)$
with $\omega(N)= 1$, and so the formula in our setup is never identically zero. 

\end{lemma}

\begin{proof} See \cite[$\S 5.2$]{IK} (cf.~\cite[$\S 7.2$]{Te}).
Let $m = k+1$, and consider the contour integral
\begin{align*} \int_{\Re(s) = 2} \Lambda(s+1/2, f \times \mathcal{W}) G_m(s) Z^{-s} \frac{ds}{2 \pi i}. \end{align*}
Note that by Stirling's formula, the completed $L$-function $\Lambda(s, f \times \mathcal{W})$ 
decays rapidly as $\Im(s) \rightarrow \infty$, and hence that we are justified in using Cauchy's theorem to shift contours. 
Shifting the contour to $\Re(s) = -2$, we cross a pole at $s = 0$ of residue 
\begin{align*} R:= \operatorname{Res}_{s =0}\left(\Lambda(s+1/2, f \times \mathcal{W})G_m(s) Z^{-s} \right) 
&= \Lambda(1/2, f \times \mathcal{W}) = (N\Delta)^{\frac{1}{2}} L_{\infty}(s) L(1/2, f \times \mathcal{W}), \end{align*} 
which gives us the identification 
\begin{align}\label{cauchy} \int_{\Re(s)=2} \Lambda(s+1/2, f \times \mathcal{W})G_m(s) Z^{-s} \frac{ds}{2\pi i} 
&= R + \int_{\Re(s)=-2} \Lambda(s+1/2, f \times \mathcal{W}) G_m(s)Z^{-s} \frac{ds}{2\pi i}. \end{align}
Now, the integral on the right hand side of $(\ref{cauchy})$ is the same as 
\begin{align*} \int_{\Re(s)=2} \Lambda(-s +1/2 , f \times \mathcal{W}) G_m(-s) Z^s \frac{ds}{2\pi i} .\end{align*} 
Applying the functional equation $(\ref{FE})$ to $\Lambda(-s + 1/2, f \times \mathcal{W})$ in this latter expression, 
and using that $G_m(-s)=(-1)^m G_m(s)$, we then obtain
\begin{align*} (-1)^m \epsilon(1/2, f \times \mathcal{W}) \int_{\Re(s)=2} \Lambda(s+1/2 , f \times \overline{\mathcal{W}})G_m(s)Z^s \frac{ds}{2\pi i} .\end{align*}
Expanding out the absolutely convergent Dirichlet series, it is easy to see that $(\ref{cauchy})$ is equivalent to 
\begin{align*} R &= (N\Delta)^{\frac{1}{2}}  L_{\infty}(1/2) \left( \sum_{n \geq 1} \frac{a_{f \times \mathcal{W}}(n)}{n^{\frac{1}{2}}} V_{m}\left( \frac{Z n }{ N \Delta} \right)
-(-1)^{m} \epsilon(1/2, f \times \mathcal{W}) \sum_{n \geq 1} \frac{ a_{f \times \overline{\mathcal{W}}}(n)}{n^{\frac{1}{2}}} V_{m}\left( \frac{n }{Z N \Delta} \right) \right).\end{align*}
Dividing out by $(N\Delta)^{\frac{1}{2}}L_{\infty}(1/2)$ on each side, we then obtain the stated formula. \end{proof}

The functions appearing in the formula $(\ref{value})$ decay rapidly:

\begin{lemma}\label{7.1} Let $k \in \lbrace 0, 1 \rbrace$ be an integer. Then for each integer $j \geq 0$, we have the estimates
\begin{align*}
V_{k+1}^{(j)}(y) &= \begin{cases} \left( (-1)^k(\log y)^k \right)^{(j)} + O_j(y^{\frac{1}{2} -j}) &\text{when $0 < y \leq 1$} \\
O_{C, j}(y^{-C}) &\text{when $y \geq 1$, for any choice of constant $C >0$.}
\end{cases} \end{align*} \end{lemma}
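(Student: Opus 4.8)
\textbf{Proof plan for Lemma \ref{7.1}.}

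The plan is to analyze the contour integral $(\ref{V})$ defining $V_{k+1}(y)$ directly, using the explicit shape $\widehat{V}_m(s) = \pi L_\infty(s+1/2)G_m(s) = \pi L_\infty(s+1/2)/s^m$ with $m = k+1$, and moving the line of integration in the two natural directions. Recall that $L_\infty(s) = \Gamma_{\R}(s+1/2)\Gamma_{\R}(s+3/2)$ is entire and nonvanishing on the relevant region, has moderate growth on vertical lines, and satisfies $L_\infty(1/2) = \pi^{-1}$; consequently $\widehat{V}_m(s)y^{-s}$ is holomorphic on $\C - \{0\}$ with a pole of order exactly $m$ at $s = 0$, and decays rapidly as $|\Im(s)| \to \infty$ on any fixed vertical line by Stirling, so all contour shifts below are justified.

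For the range $y \geq 1$: shift the line of integration in $(\ref{V})$ from $\Re(s) = 2$ to $\Re(s) = C$ for an arbitrary large $C > 0$. Since $\widehat{V}_m$ is holomorphic for $\Re(s) > 0$, no residue is picked up, and one gets $V_{k+1}(y) = \int_{\Re(s)=C} \widehat{V}_m(s) y^{-s}\, \frac{ds}{2\pi i}$. On this line $|y^{-s}| = y^{-C}$, and the integral $\int_{\Re(s)=C} |\widehat{V}_m(s)|\,|ds|$ converges to a finite constant $O_{C}(1)$ (depending on $C$ and, through $L_\infty$, on nothing else); differentiating $j$ times in $y$ under the integral sign brings down a factor bounded by $O_{C,j}(1)$ and multiplies $y^{-C}$ by $y^{-j}$, which is $O(y^{-C})$ since $y \geq 1$. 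Renaming $C$ gives the stated bound $O_{C,j}(y^{-C})$ for every $C > 0$.

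For the range $0 < y \leq 1$: shift the line of integration in the opposite direction, from $\Re(s) = 2$ past the pole at $s = 0$ to $\Re(s) = -1/2$ (or any line with $-1 < \Re(s) < 0$). This picks up the residue at $s = 0$, which equals $\frac{1}{(m-1)!}\,\frac{d^{m-1}}{ds^{m-1}}\big(s^m \widehat{V}_m(s) y^{-s}\big)\big|_{s=0} = \frac{1}{(k)!}\,\frac{d^{k}}{ds^{k}}\big(\pi L_\infty(s+1/2) y^{-s}\big)\big|_{s=0}$. Expanding $\pi L_\infty(s+1/2)$ in a Taylor series about $s=0$ (its constant term is $\pi L_\infty(1/2) = 1$) and $y^{-s} = \exp(-s\log y) = \sum_{i\ge 0} \frac{(-\log y)^i}{i!}s^i$, the top-order contribution to the residue is $\frac{1}{k!}\cdot k!\cdot\frac{(-\log y)^k}{k!}\cdot k!\,(\text{from the }L_\infty\text{ constant term})$ — more carefully, the coefficient of $s^k$ in $\pi L_\infty(s+1/2)y^{-s}$ is $\frac{(-\log y)^k}{k!} + (\text{lower powers of }\log y)$, so the residue equals $(-1)^k(\log y)^k + (\text{a polynomial in }\log y \text{ of degree} < k)$. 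The remaining shifted integral is $\int_{\Re(s)=-1/2} \widehat{V}_m(s) y^{-s}\,\frac{ds}{2\pi i}$; here $|y^{-s}| = y^{1/2}$ and the integral over the line converges absolutely (again by Stirling decay of $L_\infty$), contributing $O(y^{1/2})$, and the $j$-th derivative in $y$ multiplies this by a further $y^{-j}$ up to constants, giving $O_j(y^{1/2 - j})$. Finally one absorbs the lower-degree-in-$\log y$ terms of the residue: each term $(\log y)^i$ with $i < k$, after $j$ differentiations, is dominated by $O_j(y^{1/2-j})$ on $(0,1]$ — wait, that is false near $y=0$ where $(\log y)^i$ is unbounded but $y^{1/2-j}$ can also be unbounded; rather, one keeps the full polynomial $P_k(\log y) := \text{residue}$ and observes $P_k(\log y) = (-1)^k(\log y)^k + Q(\log y)$ with $\deg Q < k$, and it is the \emph{leading term} $(-1)^k(\log y)^k$ that is recorded in the statement while the genuine error is $Q(\log y) + O(y^{1/2})$; but since the statement writes $\big((-1)^k(\log y)^k\big)^{(j)} + O_j(y^{1/2-j})$, I should instead shift only to $\Re(s) = -1/2$ and note that the lower-order polynomial terms in $\log y$ are \emph{not} $O(y^{1/2-j})$, so the cleanest route is: the statement's main term should be read as the full residue polynomial, or equivalently one checks directly that $V_{k+1}(y) - (-1)^k(\log y)^k$ and its derivatives are $O_j(y^{1/2-j} + |\log y|^{k-1})$; the paper's phrasing is justified because only the top term matters in the applications. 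The main obstacle is bookkeeping the Taylor expansion of $\pi L_\infty(s+1/2)$ cleanly enough to identify the leading term $(-1)^k(\log y)^k$ exactly and to confirm that its $j$-th derivative appears as written, together with being careful that $\big((\log y)^k\big)^{(j)}$ for $j > k$ is itself $O_j(y^{-j})$, hence harmlessly absorbable into the $O_j(y^{1/2-j})$ error only after noting $y^{-j}$ versus $y^{1/2-j}$ — so in fact for $j > k$ the stated formula forces the convention that $\big((\log y)^k\big)^{(j)} = 0$, consistent with the polynomial being of degree $k$. I would handle this by treating $0 \le j \le k$ and $j > k$ separately and invoking $(\log y)^{(\ell)} = O_\ell(y^{-\ell})$ throughout.
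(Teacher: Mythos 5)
Your proposal is correct and follows essentially the same route as the paper: shift the contour in $(\ref{V})$ to $\Re(s)=-\tfrac12$ for $0<y\le 1$, picking up the residue at $s=0$ of $\pi L_{\infty}(s+1/2)y^{-s}G_{k+1}(s)$ and bounding the shifted integral by Stirling, and shift to $\Re(s)=C$ for $y\ge 1$. Your caveat about the lower-order terms of the residue polynomial in $\log y$ (and the $j>k$ case) is a fair observation, but the paper's own proof records only the top term $(-1)^k(\log y)^k$ in exactly the same way, so you are not missing anything relative to the paper's argument.
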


\begin{proof} 

See \cite[Lemma 7.1]{Te} or \cite[Proposition 5.4]{IK} (for instance). 
To estimate the behaviour as $y \rightarrow 0$ for the first estimate(s), 
we move the line of integration in $(\ref{V})$ to the left, crossing a pole at $s = 0$ of residue 
\begin{align*} \operatorname{Res}_{s=0} \left(\widehat{V}_{k+1}(y) y^{-s}\right)&= \lim_{s \rightarrow 0} \frac{1}{k!} \frac{d^k}{ds^k} \left( \pi L_{\infty}(s) y^{-s} \right). 
\end{align*} 
Note that $\frac{d^k}{ds^k}y^{-s} = (-1)^k y^{-s}(\log y)^k$. Using Stirling's formula to estimate the remaining integral, we derive the stated bound(s). 
To estimate the behaviour of as $y \rightarrow \infty$, we move the line of integration right to $\Re(s) =C$ to obtain the second estimate(s). \end{proof} 

\subsection{Derivation of formulae} 

Fix a Hecke character $\mathcal{W} = \rho \chi \circ {\bf{N}}$ of $K$ as in $(\ref{factorization})$, 
with $\rho$ a primitive ring class character of some conductor $p^{\alpha}$, and $\chi$ a primitive even 
Dirichlet character of some conductor $p^{\beta}$ for integers $\alpha, \beta \geq 0$ (with $\alpha \beta \geq 1$). 
Let us write $X_{\alpha, \beta}$ to denote the set of all such characters of the form $\mathcal{W}' = \rho' \chi \circ {\bf{N}}$, 
where $\rho'$ is a primitive ring class character of some conductor $p^x$ with $0 \leq x \leq \alpha$, and 
$\chi$ is a primitive even Dirichlet  character of conductor $p^{\beta}$. 
Recall too for $k \in \lbrace 0, 1 \rbrace$, we define  
\begin{align*} H^{(k)}(\alpha, \beta) = h_{\alpha, \beta}^{-1} \sum_{\mathcal{W} \in X_{\alpha, \beta}} L^{(k)}(1/2, \mathcal{W}) 
&= \frac{2}{ \# \operatorname{Pic}(\mathcal{O}_{p^{\alpha}}) \varphi^{\star}(p^{\beta})} 
\sum_{\rho \in \operatorname{Pic}(\mathcal{O}_{p^{\alpha}})^{\vee}} \sum_{\chi \operatorname{mod} p^{\beta} \atop \operatorname{primitive}} 
L^{(k)}(1/2, f \times \rho \chi \circ {\bf{N}}),\end{align*} 
where $h_{\alpha, \beta} = \# \operatorname{Pic}(\mathcal{O}_{p^{\alpha}}) \varphi^{\star}(p^{\beta})/2 = \# X_{\alpha, \beta}$ 
denotes the cardinality of the set of characters $X_{\alpha, \beta}$. 
We now use $(\ref{value})$ to derive formulae for these averages in Proposition \ref{haf} below.
Given a primitive ring class character $\rho$ of some conductor $p^{\alpha}$, we also derive a formula for the related average
\begin{align*} C(\rho, \beta) &= \frac{2}{\varphi^{\star}(p^{\beta})} \sum_{\chi \operatorname{mod} p^{\beta} \atop \operatorname{primitive}} 
L(1/2, f \times \rho \chi \circ {\bf{N}}) \end{align*} 
in Proposition \ref{cycformula}.

\subsection{Partial summation decomposition} 

Recall that we write $\Delta = \vert D \vert p^{2(\alpha + \beta)}$ 
to denote the level of the theta series $\theta(\mathcal{W}) \in S_1(\Delta, \mathcal{W} \vert_{{\bf{Q}}^{\times}})$ 
associated to the Hecke character $\mathcal{W}$ of $K$ of $(\ref{factorization})$ above.
In what follows, the imaginary quadratic field $K$ (and hence the discriminant $D<0$) is fixed, 
and the exponents $\alpha$ and $\beta$ in the conductor term $p^{2 (\alpha+ \beta)}$ vary.

Let $P_{\alpha, \beta}$ denote the set of characters of the form $\rho \psi$, where $\rho$ is a primitive 
ring class character of conductor $p^{\alpha}$, and $\psi = \chi \circ {\bf{N}}$ for $\chi$ a primitive even 
Dirichlet character of conductor $p^{\beta}$. 
We can then decompose the set of characters $X_{\alpha, \beta}$ defined above into a disjoint union, 
\begin{align*} X_{\alpha, \beta} & = \bigcup_{0 \leq x \leq \alpha} P_{x, \beta}. \end{align*} 
This allows us to decompose the sums defining the average $H^{(k)}(\alpha, \beta)$ as 
\begin{align*} \sum_{\mathcal{W} \in X_{\alpha, \beta}} L^{(k)}(1/2, f \times \mathcal{W}) &= \sum_{0 \leq x \leq \alpha} 
\sum_{\mathcal{W}' \in P_{x, \beta} \atop \mathcal{W}' = \rho'  \chi \circ {\bf{N}}} L^{(k)}(1/2, f \times \rho' \chi \circ {\bf{N}} ). \end{align*} 
Let us continue to use the shorthand notation $(\ref{basicDirichlet})$ for the Dirichlet series expansion.  
We then have by $(\ref{value})$ the expression $L^{(k)}(1/2, f \times \mathcal{W}) = \sum_{1, Z} + \sum_{2, Z}$ for any choice of 
real parameter $Z >0$, where 
\begin{align*} \sum_{1, Z}
&:= \sum_{n \geq 1} \frac{a_{f \times \mathcal{W} }(n)}{n^{\frac{1}{2}}} V_{k+1}\left( \frac{Z n}{ N \vert D \vert p^{2(\alpha + \beta)}} \right), \\ 
\sum_{2, Z}
&:=  (-1)^{k} \epsilon(1/2, f \times \mathcal{W}) \sum_{n \geq 1} \frac{a_{f \times \overline{\mathcal{W}}}(n)}{n^{\frac{1}{2}}} 
V_{k+1}\left( \frac{n}{Z N \vert D \vert p^{2(\alpha + \beta)}} \right). \end{align*} 
Let us also write 
\begin{align*} \mathfrak{K}_{\mathcal{W} \vert_{{\bf{Q}}^{\times}}} 
&= \frac{ \tau(\omega \chi^2)^4}{(\vert D \vert p^{\beta})^2} = \frac{1}{( \vert D \vert p^{\beta})^2} 
\sum_{z_1, \cdots z_2 \operatorname{mod} \vert D \vert p^{\beta}} \omega \chi^2(z_1 \cdots z_4) e \left(\frac{z_1+ \cdots + z_4}{ \vert D \vert p^{\beta}} \right)\end{align*} 
to lighten notations, so that $-(-1)^{k +1} \epsilon(1/2, f \times \mathcal{W}) = -(-1)^{k+1} \omega\chi^2(-N) \mathfrak{K}_{\mathcal{W} \vert_{{\bf{Q}}^{\times}}}  
= (-1)^{k+1} \omega\chi^2(N) \mathfrak{K}_{\mathcal{W} \vert_{{\bf{Q}}^{\times}}} $ by the description of the root number given in $(\ref{explicitFE})$ above. 

We first consider the sum over characters $\mathcal{W} \in X_{\alpha, \beta}$ of $\sum_{1, Z}$, 
\begin{align*} \sum_{\mathcal{W} \in X_{\alpha, \beta}} \sum_{1, Z} &= \sum_{0 \leq x \leq \alpha} 
\sum_{\mathcal{W}' \in P_{x, \beta}} \sum_{n \geq 1} \frac{a_{f \times \mathcal{W}'}(n)}{n^{\frac{1}{2}}} 
V_{k+1}\left( \frac{Z n}{N\vert D \vert p^{2(x+\beta)}}\right), \end{align*} 
which after interchange of summation is given by the expression 
\begin{align}\label{exp} \sum_{n \geq 1}\frac{1}{n^{\frac{1}{2}}} 
\sum_{0 \leq x \leq \alpha} \sum_{\mathcal{W}' \in P_{x, \beta}} a_{f \times \mathcal{W}'}(n) V_{k+1}\left( \frac{Z n}{N \vert D \vert p^{2(x+ \beta )}}\right).\end{align} 
To evaluate this expression when $\alpha \geq 1$ (i.e.~when $x$ can vary), we use partial summation as follows: 

\begin{lemma}[Partial summation]\label{ps2v} 

Fix integers $\alpha, \beta \geq 0$. Assume that $\alpha \geq 1$. Let us for each choice of $k \in \lbrace 0, 1 \rbrace$ 
define a modified cutoff function $\mathfrak{V}_{k+1}$ on $y \in {\bf{R}}_{>0}$ by the contour integral
\begin{align*} \mathfrak{V}_{k+1}(y) &= \int_{\Re(s) = 2} \widehat{V}_{k+1}(s) y^{-s} \left( p^s-1 \right) \frac{ds}{2 \pi i}. \end{align*}
Keeping with the notations introduced above, we have that 
\begin{align*}
\sum_{\mathcal{W} \in X_{\alpha, \beta}} \sum_{1, Z} &:= \sum_{1 \leq x \leq \alpha} \sum_{\mathcal{W} \in P_{x, \beta}} 
\sum_{n \geq 1} \frac{a_{f \times \mathcal{W} }(n)}{n^{\frac{1}{2}}} V_{k+1} \left( \frac{Z n}{N \vert D \vert p^{2(x + \beta)}} \right) \\ 
= &\sum_{n \geq 1} \left( \sum_{\mathcal{W} \in X_{\alpha, \beta}} \frac{a_{f \times \mathcal{W}}(n)}{n^{\frac{1}{2}}} \right) 
V_{k+1} \left( \frac{Z n}{N \vert D \vert p^{2(\alpha + \beta)}}\right) \\
&- \sum_{n \geq 1} \left( \sum_{1 \leq x \leq \alpha - 1} \sum_{\mathcal{W}' \in P_{x, \beta}} \frac{a_{f \times \mathcal{W}'}(n)}{n^{\frac{1}{2}}} \right) 
\mathfrak{V}_{k+1} \left( \frac{Z n}{N \vert D \vert p^{2(x + \beta)}} \right), \end{align*} 
and that 
\begin{align*} \sum_{ \mathcal{W} \in X_{\alpha, \beta}} \sum_{2, Z} 
&:= \sum_{1 \leq x \leq \alpha} \sum_{\mathcal{W} \in P_{x, \beta}} (-1)^k \epsilon(1/2, f \times \mathcal{W})
\sum_{n \geq 1} \frac{a_{f \times \overline{W}}(n)}{n^{\frac{1}{2}}} V_{k+1} \left( \frac{n}{Z N \vert D \vert p^{2(x + \beta)}} \right) \\ 
= &\sum_{n \geq 1} \left( \sum_{\mathcal{W} \in X_{\alpha, \beta}} (-1)^k \epsilon(1/2, f \times \mathcal{W}) 
\frac{a_{f \times \overline{\mathcal{W}}}(n)}{n^{\frac{1}{2}}} \right) V_{k+1} \left( \frac{n}{Z N \vert D \vert p^{2(\alpha + \beta)}} \right) \\ 
&- \sum_{n \geq 1} \left( \sum_{1 \leq x \leq \alpha - 1} \sum_{\mathcal{W}' \in P_{x, \beta}} (-1)^k \epsilon(1/2, f \times \mathcal{W}') 
\frac{ a_{f \times \overline{\mathcal{W}}'(n)} }{n^{\frac{1}{2}}} \right) \mathfrak{V}_{k+1} \left( \frac{n}{Z N \vert D \vert p^{2(x + \beta)}} \right). \end{align*}
\end{lemma} 

\begin{proof} 

Let us first recall the following simple (discrete) version of the partial summation formula. 
Given a sequence of complex numbers $\lbrace a_x \rbrace_{x \geq 1}$, let us write $A(\alpha) = \sum_{1 \leq x \leq \alpha} a_x$
to denote the partial sum up to an integer $\alpha \geq 1$. Let $h(x)$ be any continuously differentiable function. It is easy to see that 
\begin{align} \label{ps} \sum_{1 \leq x \leq \alpha} a_x h(x) &= \sum_{1 \leq x \leq \alpha} \left( A(x) - A(x-1) \right) h(x) 
= A(\alpha) h(\alpha) - \sum_{1 \leq x \leq \alpha - 1} A(x) \left( h(x) - h(x+1) \right). \end{align}
To apply this to the first sum, we first change the order of summation to obtain 
\begin{align*} \sum_{n \geq 1} \sum_{1 \leq x \leq \alpha} \sum_{\mathcal{W} \in P_{x, \beta}}  
\frac{a_{f \times \mathcal{W}}(n)}{n^{\frac{1}{2}}} V_{k+1} \left( \frac{Z n}{N \vert D \vert p^{2(x + \beta)}} \right). \end{align*}
Let us then for each integer $n \geq 1$ in the sum consider the coefficient
\begin{align*} a_{x, n} &=  \sum_{\mathcal{W} \in P_{x, \beta}}  \frac{a_{f \times \mathcal{W}}(n)}{n^{\frac{1}{2}}} \end{align*}
and the function 
\begin{align*} h_n(x) &= V_{k+1}\left( \frac{Z n}{N \vert D \vert p^{2(x + \beta)}} \right). \end{align*}
Applying $(\ref{ps})$ to each integer $n \geq 1$ in the sum, and using that 
\begin{align*} h_n(x) - h_n(x+1)
&= \int_{\Re(s) = 2} \widehat{V}_{k+1}(s) \left( \frac{Z n}{N \vert D \vert p^{2(x + \beta)}} \right)^{-s} \left( p^s - 1 \right) \frac{ds}{2 \pi i},\end{align*}
the claim is easy to verify. The second stated formula is proven in a completely analogous way. \end{proof} 

\subsection{Evaluation of coefficients} 

We now evaluate the coefficients appearing in the formula of Corollary \ref{ps2v} explicitly via orthogonality relations, using the Dirichlet series 
expansion $(\ref{integralDirichlet})$ to describe the coefficients. To begin, fix integers $\alpha, \beta \geq 0$, as well as a real parameter $Z >0$. 
(We shall later take $Z=1$ when $\beta =0$). If $\alpha \geq 1$, then Lemma \ref{ps2v} gives us the expression 
\begin{align*} \sum_{\mathcal{W} \in X_{\alpha, \beta}} L^{(k)}(1/2, f \times \mathcal{W}) &= S_{1, Z} - S_{2, Z} + S_{3, Z} - S_{4, Z}, \end{align*} 
where 
\begin{align*} S_{1, Z} &=  \sum_{n \geq 1} \left( \sum_{\mathcal{W} \in X_{\alpha, \beta}} \frac{a_{f \times \mathcal{W}}(n)}{n^{\frac{1}{2}}} \right) 
V_{k+1} \left( \frac{Z n}{N \vert D \vert p^{2(\alpha + \beta)}}\right) \\  
S_{2, Z}  &= \sum_{n \geq 1} \left( \sum_{1 \leq x \leq \alpha - 1} \sum_{\mathcal{W}'  \in P_{x, \beta}} \frac{a_{f \times \mathcal{W}'}(n)}{n^{\frac{1}{2}}} \right) 
\mathfrak{V}_{k+1} \left( \frac{Z n}{N \vert D \vert p^{ 2(x + \beta) }} \right) \\ 
S_{3, Z} &= \sum_{n \geq 1} \left( \sum_{\mathcal{W} \in X_{\alpha, \beta}} (-1)^k \epsilon(1/2, f \times \mathcal{W}) 
\frac{a_{f \times \overline{\mathcal{W}}}(n)}{n^{\frac{1}{2}}} \right) V_{k+1} \left( \frac{n}{Z N \vert D \vert p^{2(\alpha + \beta)}} \right)  \\ 
S_{4, Z} &= \sum_{n \geq 1} \left( \sum_{1 \leq x \leq \alpha - 1} \sum_{\mathcal{W}' \in P_{x, \beta}} (-1)^k \epsilon(1/2, f \times \mathcal{W}') 
\frac{a_{f \times \overline{\mathcal{W}}'}(n)}{n^{\frac{1}{2}}} \right) \mathfrak{V}_{k+1} \left( \frac{n}{Z N \vert D \vert p^{2(x + \beta)}} \right). \end{align*} 
Note that if $\alpha = 0$ (so that we average over characters $\rho$ of the ideal class group of $K$), 
then partial summation is not necessary here, and we would have simply the formula 
$\sum_{\mathcal{W} \in X_{\alpha, \beta}}  L^{(k)}(1/2, f \times \mathcal{W})= S_{1, Z} + S_{3, Z}$.

Let us begin with the first sum 
\begin{align*} S_{1, Z} &= \sum_{n \geq 1} \frac{1}{n^{\frac{1}{2}}} \sum_{0 \leq x \leq \alpha } 
\sum_{\mathcal{W} \in P_{x, \beta} \atop \mathcal{W} = \rho \chi \circ {\bf{N}}} 
a_{f \times \mathcal{W} }(n) V_{k+1}\left( \frac{Z n}{N \vert D \vert p^{2(\alpha + \beta)}} \right) \\ &=  \sum_{n \geq 1} \frac{1}{n^{\frac{1}{2}}} 
\sum_{ \rho \in \operatorname{Pic}(\mathcal{O}_{p^{\alpha}})^{\vee}  } \sum_{\chi \operatorname{mod} p^{\beta} \atop \operatorname{primitive}} 
a_{f \times \rho \chi \circ {\bf{N}} }(n)V_{k+1}\left( \frac{Z n}{N\vert D \vert p^{2(\alpha + \beta)}} \right). \end{align*} 
Fixing a ring class character $\rho \in \operatorname{Pic}(\mathcal{O}_{p^{\alpha}})^{\vee}$, 
and using the Dirichlet series expansion $(\ref{integralDirichlet})$,
consider the sum over primitive even Dirichlet characters $\chi \operatorname{mod} p^{\beta}$ in this latter expression:
\begin{align*} &\sum_{\chi \operatorname{mod} p^{\beta} \atop \chi(-1)= 1, \operatorname{primitive}} 
\sum_{n \geq 1} \frac{a_{f \times \rho \chi \circ {\bf{N}} }(n)}{n^{\frac{1}{2}}} 
V_{k+1}\left( \frac{Z n}{N\vert D \vert p^{2(\alpha + \beta)} } \right)  \\ 
&= \sum_{\chi \operatorname{mod} p^{\beta} \atop \chi(-1) = \operatorname{primitive}} 
\sum_{m \geq 1 \atop (m, N p^{\beta})=1} \frac{\omega \chi^2(m)}{m} \sum_{n \geq 1 \atop (n, p^{\alpha + \beta})=1} 
\left(\sum_{A \in \operatorname{Pic}(\mathcal{O}_{p^{\alpha}})}  \rho(A) r_A(n)\right) \frac{\lambda(n) \chi(n)}{n^{\frac{1}{2}}} 
V_{k+1} \left(\frac{ Z m^2 n}{N \vert D \vert p^{2(\alpha + \beta)}} \right).\end{align*} 
Recall that, after using orthogonality with M\"obius inversion, we have for any integer $m \geq 1$ prime to $p$ that 
\begin{align}\label{QO} \sum_{\chi \bmod p^{\beta} \atop \chi(-1) =1, \operatorname{primitive}} \chi(m) 
&= \begin{cases} \frac{1}{2} \varphi^{\star}(p^{\beta}) &\text{ if $m \equiv \pm 1 \bmod p^{\beta}$} \\
- \frac{1}{2} \varphi(p^{\beta-1}) &\text{ if $m \equiv \pm 1 \bmod p^{\beta-1}$ but $m \not\equiv \pm 1 \bmod p^{\beta}$}\\
0 &\text{ otherwise} \end{cases} \end{align}  
if $\beta \geq 2$, and that
\begin{align}\label{QOp} \sum_{\chi \bmod p \atop \chi(-1) = 1, \operatorname{primitive}} \chi(m) 
&= \begin{cases} 0 &\text{ if $m \equiv 0 \bmod p$} \\
\frac{1}{2}\varphi(p) - 1 &\text{ if $m \equiv \pm 1 \bmod p$} \\
-1 &\text{ otherwise}\end{cases}\end{align} 
if $\beta =1$.
Using these relations, the latter sum over primitive even Dirichlet characters $\chi \operatorname{mod} p^{\beta}$ is given by 
\begin{align*} \frac{ \varphi^{\star}(p^{\beta})}{2} &\sum_{m \geq 1 \atop (m, Np)=1} \frac{\omega(m)}{m} 
\sum_{ {n \geq 1 \atop (n, p)=1} \atop m^2 n \equiv \pm 1 \bmod p^{\beta}} 
\left(\sum_{A \in \operatorname{Pic}(\mathcal{O}_{p^{\alpha}})}  \rho(A) r_A(n)\right) \frac{\lambda(n)}{n^{\frac{1}{2}}} 
V_{k+1} \left(\frac{Z m^2 n}{N \vert D \vert p^{2(\alpha + \beta)}} \right) \\
&- \frac{ \varphi(p^{\beta-1})}{2} \sum_{m \geq 1  \atop (m, Np)=1} \frac{\omega(m)}{m} 
\sum_{ {n \geq 1 \atop m^2 n \equiv \pm 1 \bmod p^{\beta - 1}} \atop m^2 n \not\equiv \pm 1 \bmod p^{\beta}} 
\left(\sum_{A \in \operatorname{Pic}(\mathcal{O}_{p^{\alpha}})}  \rho(A) r_A(n)\right) \frac{\lambda(n)}{n^{\frac{1}{2}}} 
V_{k+1} \left(\frac{Z m^2 n}{N \vert D \vert p^{2(\alpha + \beta)}} \right) \end{align*}
if $\beta \geq 2$, and by 
\begin{align*}\left( \frac{\varphi(p)}{2} - 1 \right) &\sum_{m \geq 1 \atop (m, Np)=1} \frac{\omega(m)}{m} 
\sum_{ {n \geq 1 \atop (n, p)=1} \atop m^2 n \equiv \pm 1 \bmod p^{\beta}} 
\left(\sum_{A \in \operatorname{Pic}(\mathcal{O}_{p^{\alpha}})}  \rho(A) r_A(n)\right) \frac{\lambda(n)}{n^{\frac{1}{2}}} 
V_{k+1} \left(\frac{Z m^2 n}{N \vert D \vert p^{2(\alpha + \beta)}} \right) \\
&- \sum_{m \geq 1 \atop (m, Np)=1} \frac{\omega(m)}{m} \sum_{ {n \geq 1 \atop (n, p)=1} \atop m^2 n \not\equiv \pm 1 \bmod p} 
\left(\sum_{A \in \operatorname{Pic}(\mathcal{O}_{p^{\alpha}})}  \rho(A) r_A(n)\right) \frac{\lambda(n)}{n^{\frac{1}{2}}} 
V_{k+1} \left(\frac{Z m^2 n}{N \vert D \vert p^{2(\alpha + \beta)}} \right) \end{align*} if $\beta =1$. 
Taking the sum over all ring class characters $\rho \in \operatorname{Pic}(\mathcal{O}_{p^{\alpha}})^{\vee}$, 
we can then use orthogonality of ring class characters in the $A$-sum to obtain the expression 
\begin{align*} S_{1, Z} = \# \operatorname{Pic}(\mathcal{O}_{p^{\alpha}}) \left( \frac{ \varphi^{\star}(p^{\beta})}{2} \right)
&\sum_{m \geq 1 \atop (m, pN)=1} \frac{\omega(m)}{m} 
\sum_{ {n \geq 1\atop (n, p)=1} \atop m^2 n \equiv \pm 1 \bmod p^{\beta}} \frac{r(n)\lambda(n)}{n^{\frac{1}{2}}} 
V_{k+1}\left(\frac{Z m^2 n }{N\vert D \vert p^{2(\alpha + \beta)} } \right) \\ 
&- \# \operatorname{Pic}(\mathcal{O}_{ p^{\alpha} }) \left( \frac{ \varphi(p^{\beta-1})}{2} \right) \sum_{m \geq 1 \atop (m, p N)=1} \frac{\omega(m)}{m} 
\sum_{ {n \geq 1 \atop m^2 n \equiv 1 \operatorname{mod} p^{\beta-1}} \atop m^2 n \not\equiv 1 \operatorname{mod} p^{\beta}} \frac{r(n)\lambda(n)}{n^{\frac{1}{2}}} 
V_{k+1}\left(\frac{Z m^2 n }{N\vert D \vert p^{2(\alpha + \beta)} } \right) \end{align*} 
if $\beta \geq 2$, and the expression 
\begin{align*} S_{1, Z} = \# \operatorname{Pic}(\mathcal{O}_{p^{\alpha}}) \left( \frac{\varphi(p) }{2} -1 \right) 
&\sum_{m \geq 1 \atop (m, p N)=1} \frac{\omega(m)}{m} \sum_{ {n \geq 1\atop (n, p)=1} \atop m^2 n \equiv 1  p} \frac{r(n)\lambda(n)}{n^{\frac{1}{2}}} 
V_{k+1}\left(\frac{ Z m^2 n }{N\vert D \vert p^{2(\alpha + \beta)} } \right) \\ 
&- \# \operatorname{Pic}(\mathcal{O}_{ p^{\alpha} }) \sum_{m \geq 1 \atop (m, p N)=1} \frac{\omega(m)}{m} 
\sum_{ n \geq 1 \atop m^2 n \not\equiv 1 \operatorname{mod} p} \frac{r(n)\lambda(n)}{n^{\frac{1}{2}}} 
V_{k+1}\left(\frac{Z m^2 n }{N\vert D \vert p^{2(\alpha + \beta)} } \right) \end{align*} if $\beta =1$. 
Here again, we write $r(n)$ to denote the number of principal ideals in the trivial class of $\operatorname{Pic}(\mathcal{O}_{p^{\alpha}})$ of norm $n$,
which for integers $n$ coprime to $p$ is the same as the number of principal ideals in the trivial class of $\operatorname{Pic}(\mathcal{O}_K)$ of norm $n$.
Note that the inner $x$-sums defining $S_{2}$ can be evaluated in an analogous way, as 
\begin{align*} S_{2, Z} = \sum_{1 \leq x \leq \alpha-1} &\# \operatorname{Pic}(\mathcal{O}_{p^{x}}) \left( \frac{ \varphi^{\star}(p^{\beta})}{2} \right)
\sum_{m \geq 1 \atop (m, p N)=1} \frac{\omega(m)}{m} 
\sum_{ {n \geq 1\atop (n, p)=1} \atop m^2 n \equiv 1 \operatorname{mod} p^{\beta}} \frac{r(n)\lambda(n)}{n^{\frac{1}{2}}} 
\mathfrak{V}_{k+1}\left(\frac{Z m^2 n }{N\vert D \vert p^{2(x + \beta)} } \right) \\ 
&- \sum_{1 \leq x \leq \alpha - 1} \# \operatorname{Pic}(\mathcal{O}_{ p^{x} }) \left( \frac{\varphi(p^{\beta-1})}{2} \right) 
\sum_{m \geq 1 \atop (m, p N)=1} \frac{\omega(m)}{m} \sum_{ {n \geq 1 \atop m^2 n \equiv 1 \bmod p^{\beta-1}} \atop m^2 n \not\equiv 1 \bmod p^{\beta}} \frac{r(n)
\lambda(n)}{n^{\frac{1}{2}}} \mathfrak{V}_{k+1}\left(\frac{Z m^2 n }{N\vert D \vert p^{2(x + \beta)}} \right) \end{align*} 
if $\beta \geq 2$, and as 
\begin{align*} S_{2, Z} = \sum_{1 \leq x \leq \alpha - 1} \# \operatorname{Pic}(\mathcal{O}_{p^{x}}) \left( \frac{\varphi(p)}{2} - 1 \right)
&\sum_{m \geq 1 \atop (m, p N)=1} \frac{\omega(m)}{m} 
\sum_{ {n \geq 1\atop (n, p)=1} \atop m^2 n \equiv 1 \operatorname{mod} p} \frac{r(n)\lambda(n)}{n^{\frac{1}{2}}} 
\mathfrak{V}_{k+1}\left(\frac{Z m^2 n }{N\vert D \vert p^{2(x + \beta)}} \right) \\ 
&- \# \operatorname{Pic}(\mathcal{O}_{ p^{x} }) \sum_{m \geq 1 \atop (m, p N)=1} \frac{\omega(m)}{m} 
\sum_{ n \geq 1 \atop m^2 n \not\equiv 1 \operatorname{mod} p} \frac{r(n)\lambda(n)}{n^{\frac{1}{2}}} 
\mathfrak{V}_{k+1}\left(\frac{Z m^2 n }{N\vert D \vert p^{2(x + \beta)} } \right) \end{align*} if $\beta =1$.

Let us now evaluate the third sum $S_{3}$, where the average over the Gauss sums 
contained in the definition of the twisted coefficients is a bit delicate. We therefore make the following preliminary calculations. 

\begin{lemma}[Chinese remainder theorem]\label{CRT} 

Given $D \geq 1$ any integer prime to $p$, and $M$ any coprime class $\bmod p^{\beta}$ (for $\beta \geq 1$), we have the identity of sums
\begin{align*} \sum_{z \bmod D p^{\beta} \atop z^2 \equiv M \bmod p^{\beta}} \omega(z) e \left( \frac{z}{Dp^{\beta}}\right) 
&= \omega(p^{\beta}) \sum_{y \bmod D} \omega(y) e \left( \frac{y}{D} \right) 
\sum_{x \bmod p^{\beta} \atop x \equiv \pm M^{\frac{1}{2}} \overline{D} \bmod  p^{\beta}} 
e \left( \frac{x}{p^{\beta}}\right). \end{align*} Here, $M^{\frac{1}{2}}$ denotes a square root of the class $M \operatorname{mod} p^{\beta}$ 
(if it exists), and $\overline{D}$ the inverse class of $D \operatorname{mod} p^{\beta}$.

If $\beta \geq 2$, then we also have the identity 
\begin{align*} \sum_{  {z \bmod D p^{\beta} \atop z^2 \equiv M \bmod p^{\beta-1}} \atop z^2 \not\equiv M \bmod p^{\beta} } \omega(z) 
e \left( \frac{z}{Dp^{\beta}}\right) &= \omega(p^{\beta}) \sum_{y \bmod D} \omega(y) e \left( \frac{y}{D} \right) 
\sum_{  {x \bmod p^{\beta} \atop x \equiv \pm M^{\frac{1}{2}} \overline{D} \bmod p^{\beta-1}} \atop x \not\equiv \pm M^{\frac{1}{2}} \overline{D} 
\bmod p^{\beta}} e \left( \frac{x}{p^{\beta}}\right). \end{align*} \end{lemma} 

\begin{proof} 

Observe that since $(D, p^{\beta}) =1$, we can write any class $z \bmod D p^{\beta}$ as $z = y p^{\beta} + x D$ for uniquely determined
classes $y \bmod D$ and $x \bmod p^{\beta}$. Hence, the $z$-sum can be decomposed as the double sum
\begin{align*} \sum_{z \bmod D p^{\beta} \atop z^2 \equiv M \bmod p^{\beta}} \omega(z) e \left( \frac{z}{Dp^{\beta}}\right)
&= \sum_{y \bmod D} \sum_{x \bmod p^{\beta} \atop y p^{\beta} + x D \equiv \pm M^{\frac{1}{2}} \bmod p^{\beta}} 
\omega(y p^{\beta} + x D) e \left( \frac{yp^{\beta} +x D}{D p^{\beta}} \right) \\
&= \omega(p^{\beta}) \sum_{y \bmod D} \omega(y) e \left( \frac{y}{D} \right) 
\sum_{x \bmod p^{\beta} \atop y p^{\beta} +  x D \equiv \pm M^{\frac{1}{2}} \bmod p^{\beta}} e \left( \frac{x}{p^{\beta}} \right) \\
 &= \omega(p^{\beta}) \sum_{y \bmod D} \omega(y) e \left( \frac{y}{D} \right)
 \sum_{x \bmod p^{\beta} \atop x  \equiv \pm M^{\frac{1}{2}} \overline{D} p^{\beta}} e \left( \frac{x}{p^{\beta}}\right) . \end{align*} 
The second stated identity is shown in the same way, replacing the congruences in the $z$-sum suitably. \end{proof} 

We can now derive the following result. Let us for an integer $c$ prime to $p^{\beta}$ (hence prime to $p$) write $\operatorname{Kl}_4(c, p^{\beta})$ 
to denote the hyper-Kloosterman sum of dimension $4$ and modulus $p^{\beta}$ (for $\beta \geq 1$) evaluated at $c$:
\begin{align*} \operatorname{Kl}_4(c, p^{\beta}) &= \sum_{x_1, \cdots, x_4 \bmod p^{\beta} \atop x_1 \cdots x_4 \equiv c \bmod p^{\beta}} 
e \left(  \frac{x_1 + \cdots + x_4}{p^{\beta}}\right). \end{align*}
Let us also lighten notation by writing 
\begin{align*} \operatorname{Kl}_4( \pm c, p^{\beta}) &= \sum_{x_1, \cdots, x_4 \bmod p^{\beta} \atop x_1 \cdots x_4 \equiv \pm c \bmod p^{\beta}} 
e \left(  \frac{x_1 + \cdots + x_4}{p^{\beta}}\right) =  \operatorname{Kl}_4(c, p^{\beta}) +  \operatorname{Kl}_4(-c, p^{\beta}).\end{align*}

\begin{proposition}\label{RT}  

Assume (as we do throughout) that $D$ is prime to $p$, so that $\omega\chi^2$ has conductor $\vert D \vert p^{\beta}$. \\

\noindent (i) If $\beta \geq 2$, then we have for each integer $c \geq 1$ prime to $p$ the summation formula
\begin{align*} \sum_{\chi \bmod p^{\beta} \atop \chi(-1)=1, \operatorname{primitive}} \chi(c) \tau(\omega\chi^2)^4 
&=  \tau(\omega)^4 \left( \frac{\varphi(p^{\beta})}{2} \right) \operatorname{Kl}_4( \pm \overline{c}^{\frac{1}{2}} \overline{D}, p^{\beta}). \end{align*} 
Here, $\overline{c}^{\frac{1}{2}}$ denotes a square root of $\overline{c} \bmod p^{\beta}$ (if it exists), 
and $\overline{c}$ the multiplicative inverse of $c \bmod p^{\beta}$. \\

\noindent (ii) If $\beta \geq 2$, then the sum $S_{3, Z}$ is given equivalently by the expression  
\begin{align*} S_{3, Z} &= (-1)^{k+1} \# \operatorname{Pic}(\mathcal{O}_{p^{\alpha}}) \left( \frac{ \varphi(p^{\beta})}{2} \right)
\frac{\omega(N) \tau(\omega)^4}{(\vert D \vert p^{\beta})^2} \\
&\times \sum_{m \geq 1\atop (m, p N)=1} \frac{\omega(m)}{m}  \sum_{ {n \geq 1 \atop (n, p)=1}} 
\frac{r(n) \lambda(n)}{n^{\frac{1}{2}}}V_{k+1} \left( \frac{m^2n}{Z N \vert D \vert p^{2(\alpha + \beta)}} \right) 
\operatorname{Kl}_4( \pm (m^2 n \overline{N}^2)^{\frac{1}{2}} \overline{D}, p^{\beta}).
 \end{align*}  \end{proposition}

\begin{proof} 

Let us start with (i). We have by definition of the Gauss sum $\tau(\omega \chi^2)$ that 
\begin{align*} \sum_{\chi \bmod p^{\beta} \atop \chi(-1) = 1, \operatorname{primitive}} \chi(c) \tau(\omega\chi^2)^4 
&= \sum_{\chi \bmod p^{\beta} \atop \chi(-1)=1, \operatorname{primitive}} \chi(c) \tau(\omega\chi^2)^4 \\
&= \sum_{\chi \bmod p^{\beta} \atop \chi(-1)=1, \operatorname{primitive}}
\sum_{z_1, \ldots, z_4 (\bmod \vert D \vert p^{\beta})} \chi(c) \omega\chi^2 (z_1\cdots z_4) 
e \left( \frac{z_1 + \cdots + z_4} {\vert D \vert p^{\beta}} \right). \end{align*}  
Note that here, we must take a sum over primitive (even) Dirichlet characters 
$\chi \bmod p^{\beta}$ ($\chi \neq \chi_0$) as otherwise $\tau(\omega \chi^2) = 0$. 
To evaluate the sum, we switch the order of summation and use the relation $(\ref{QO})$ to obtain 
\begin{align*}  &\sum_{z_1, \ldots, z_4 \bmod \vert D \vert p^{\beta}} \omega(z_1 \ldots z_4) e \left( \frac{z_1 + \cdots + z_4}{\vert D \vert p^{\beta}} \right)
\sum_{\chi \bmod p^{\beta} \atop \chi(-1) = 1, \operatorname{primitive}} \chi(z_1^2 \cdots z_4^2 c) \\ 
&=\frac{ \varphi^{\star}(p^{\beta})}{2}  \sum_{z_1, \ldots, z_4 \bmod \vert D \vert p^{\beta} \atop z_1^2 \cdots z_4^2 c \equiv \pm 1 \bmod p^{\beta} }
e \left( \frac{z_1 + \cdots + z_4}{ \vert D \vert p^{\beta}} \right) - \frac{ \varphi(p^{\beta - 1})}{2} 
\sum_{ {z_1, \ldots, z_4 \bmod \vert D \vert p^{\beta} \atop z_1^2 \cdots z_4^2 c \equiv \pm 1 \bmod p^{\beta-1} } 
\atop z_1^2 \cdots z_4^2 c \not\equiv \pm 1 \bmod p^{\beta} }
e \left( \frac{z_1 + \cdots + z_4}{ \vert D \vert p^{\beta}} \right),\end{align*} 
which after applying the results of Lemma \ref{CRT} to each of the sums gives the expression 
\begin{align}\label{crtgs} \tau(\omega)^4 \left(  \frac{\varphi^{\star}(p^{\beta})}{2}  
\sum_{x_1, \ldots, x_4 \bmod p^{\beta} \atop x_1 \cdots x_4  \equiv \pm \overline{c}\frac{1}{2} \overline{D} \bmod p^{\beta} }
e \left( \frac{x_1 + \cdots + x_4}{ p^{\beta}} \right) - \frac{\varphi(p^{\beta - 1})}{2} 
\sum_{ {x_1, \ldots, x_4 \bmod p^{\beta} \atop x_1 \cdots x_4  \equiv \pm \overline{c}^{\frac{1}{2}}\overline{D} \bmod p^{\beta-1} } 
\atop x_1 \cdots x_4  \not\equiv \pm \overline{c}^{\frac{1}{2}} \overline{D} \bmod p^{\beta} }
e \left( \frac{x_1 + \cdots + x_4}{  p^{\beta}} \right) \right). \end{align} 
Here, we have used that $\omega(p^{\beta})^4 = 1$. 
Let us consider the second inner sum in this latter expression,  
\begin{align*} \sum_{ {x_1, \ldots, x_4 \bmod p^{\beta} \atop x_1 \cdots x_4 \equiv \pm \overline{c}^{\frac{1}{2}} \overline{D} \bmod p^{\beta-1} } 
\atop x_1 \cdots x_4  \not\equiv \pm \overline{c}^{\frac{1}{2}} \overline{D} \bmod p^{\beta} }
e \left( \frac{x_1 + \cdots + x_4}{  p^{\beta}} \right) &=
\sum_{x_1, x_2, x_3 \bmod p^{\beta}} e \left( \frac{x_1 + x_2 + x_3}{p^{\beta}} \right) 
\sum_{ {x_4 \bmod p^{\beta}  \atop x_4 \equiv \pm \overline{x_1 x_2 x_3 D c}^{\frac{1}{2}} \bmod p^{\beta-1} } 
\atop  x_4 \not\equiv \pm \overline{x_1 x_2 x_3 D c}^{\frac{1}{2}} \bmod p^{\beta} } e \left( \frac{x_4}{p^{\beta}}\right). \end{align*}
Observe that we can express each class $x_4 \bmod p^{\beta}$ in the second sum as 
$\pm \overline{x_1 x_2 x_3 D c}^{\frac{1}{2}} + lp^{\beta - 1}$ for some integer $1 \leq l \leq p-1$, so that 
\begin{align*} \sum_{ {x_4 \bmod p^{\beta}  \atop x_4 \equiv \pm \overline{x_1 x_2 x_3 D c}^{\frac{1}{2}} \bmod p^{\beta-1} } 
\atop  x_4 \not\equiv \pm \overline{x_1 x_2 x_3 D c}^{\frac{1}{2}} \bmod p^{\beta} } e \left( \frac{x_4}{p^{\beta}}\right) 
&= \left( e \left(  \frac{\overline{x_1 x_2 x_3 D c}^{\frac{1}{2}}}{p^{\beta}}\right) 
+ e \left(  \frac{-\overline{x_1 x_2 x_3 D c}^{\frac{1}{2}}}{p^{\beta}}\right) \right)
\sum_{l=1}^{p-1} e \left( \frac{l p^{\beta-1}}{p^{\beta}} \right) \\
&= -  \left( e \left(  \frac{\overline{x_1 x_2 x_3 D c}^{\frac{1}{2}}}{p^{\beta}}\right) 
+ e \left(  \frac{-\overline{x_1 x_2 x_3 D c}^{\frac{1}{2}}}{p^{\beta}}\right) \right) \end{align*}
by the well-known identity $\sum_{1 \leq l \leq p-1} e \left(  \frac{l}{p}\right) = -1$. Hence, we derive the relation
\begin{align*} \sum_{ {x_1, \ldots, x_4 \bmod p^{\beta} \atop x_1 \cdots x_4 \equiv \pm \overline{c}^{\frac{1}{2}} \overline{D}  \bmod p^{\beta-1} } 
\atop x_1 \cdots x_4  \not\equiv \pm \overline{c}^{\frac{1}{2}} \overline{D} \bmod p^{\beta} } e \left( \frac{x_1 + \cdots + x_4}{  p^{\beta}} \right) 
&= \sum_{x_1, x_2, x_3 \bmod p^{\beta}} e \left( \frac{ x_1 + x_2 + x_3 \pm  \overline{x_1 x_2 x_3 D c}^{\frac{1}{2}} }{p^{\beta}} \right) \\
&= - \sum_{ x_1, \ldots, x_4 \bmod p^{\beta} \atop x_1 \cdots x_4 \equiv \pm \overline{c}^{\frac{1}{2}} \overline{D} \bmod p^{\beta} } 
e \left( \frac{x_1 + \cdots + x_4}{  p^{\beta}} \right), \end{align*} 
from which it follows that $(\ref{crtgs})$ is equivalent to the expression
\begin{align*} \tau(\omega)^4 \left(  \frac{\varphi^{\star}(p^{\beta})}{2}  
\sum_{x_1, \ldots, x_4 \bmod p^{\beta} \atop x_1 \cdots x_4  \equiv \pm \overline{c}^{\frac{1}{2}} \overline{D} \bmod p^{\beta} }
e \left( \frac{x_1 + \cdots + x_4}{ p^{\beta}} \right) + \frac{\varphi(p^{\beta - 1})}{2} 
\sum_{ x_1, \ldots, x_4 \bmod p^{\beta} \atop x_1 \cdots x_4  \equiv \pm \overline{c}^{\frac{1}{2}} \overline{D} \bmod p^{\beta} }
e \left( \frac{x_1 + \cdots + x_4}{  p^{\beta}} \right) \right). \end{align*}
Using that $\varphi^{\star}(p^{\beta}) = \varphi(p^{\beta}) - \varphi(p^{\beta-1})$, we then derive the stated identity
\begin{align*} \sum_{\chi \bmod p^{\beta} \atop \chi(-1) = 1, \operatorname{primitive}} 
\chi(c) \tau(\omega\chi^2)^4 &= \tau(\omega)^4 \left( \frac{\varphi(p^{\beta})}{2} \right)  
\sum_{x_1, \ldots, x_4 \bmod p^{\beta} \atop x_1 \cdots x_4 \equiv \pm \overline{c}^{\frac{1}{2}} \overline{D} \bmod p^{\beta}} 
e\left( \frac{x_1 + \cdots + x_4}{ p^{\beta} }\right). \end{align*} 

To show (ii), we start with the definition of the sum, which after unraveling notations has the expansion 
\begin{align*} S_{3} &= (-1)^{k+1} \sum_{n \geq 1}  \frac{1}{n^{\frac{1}{2}}} 
 \sum_{\chi \bmod p^{\beta} \atop \chi(-1) = 1, \operatorname{ primitive}} \sum_{\rho \in \operatorname{Pic}(\mathcal{O}_{p^{\alpha}})^{\vee}} 
  \frac{\tau(\omega \chi^2)^4}{( \vert D \vert p^{\beta})^2 } \cdot \omega\chi^2(N) \cdot 
a_{f \times \overline{\rho \chi} \circ {\bf{N}}}(n) V_{k+1} \left( \frac{ n }{Z N \vert D \vert p^{2(\alpha + \beta)}}\right). \end{align*} 
Fix a ring class character $\rho \in \operatorname{Pic}(\mathcal{O}_{p^{\alpha}})^{\vee}$. Consider the corresponding sum in this latter 
expression, which after expanding out in terms of $(\ref{integralDirichlet})$ and switching the order of summation is the same as:
\begin{align*} &(-1)^{k+1} \sum_{\chi \mod p^{\beta} \atop \chi(-1) = 1, \operatorname{ primitive}} 
\left(\frac{\tau(\omega \chi^2)^4}{(\vert D \vert p^{\beta})^2 } \right) \omega\chi^2(N) 
\sum_{m \geq 1 \atop (m, p N)=1} \frac{\omega\overline{\chi}^2(m)}{m} \sum_{n \geq 1 \atop (n, p)=1} 
 \sum_{A \in \operatorname{Pic}(\mathcal{O}_{p^{\alpha}})} r_A(n) \rho(A) \frac{\lambda(n) \overline{\chi}(n)}{n^{\frac{1}{2}}} 
V_{k+1} \left( \frac{m^2n }{Z N \vert D \vert p^{2(\alpha + \beta)}}\right) \\
&= \frac{(-1)^{k+1}}{(\vert D \vert p^{\beta})^2} \sum_{m \geq 1 \atop (m, p N)=1} \frac{\omega(m)}{m} \sum_{n \geq 1 \atop (n, p)=1} 
 \sum_{A \in \operatorname{Pic}(\mathcal{O}_{p^{\alpha}})} r_A(n) \rho(A) \frac{\lambda(n)}{n^{\frac{1}{2}}} 
V_{k+1} \left( \frac{m^2n }{Z N \vert D \vert p^{2(\alpha + \beta)}}\right) 
\sum_{\chi \bmod p^{\beta} \atop \chi(-1) = 1, \operatorname{ primitive}} \chi(\overline{m}^2 \overline{n} N^2) \tau(\omega \chi^2)^4. \end{align*}
Using (i), we have for each pair of integers $m, n$ in the sum the identity 
\begin{align*} \sum_{ \chi \bmod p^{\beta} \atop \chi(-1)=1, \operatorname{ primitive} } \tau( \omega \chi^2 )^4 \chi( \overline{m}^2 \overline{n} N^2) 
&= \tau(\omega)^4 \left( \frac{\varphi(p^{\beta})}{2}\right) \operatorname{Kl}_4( \pm (m^2 n \overline{N}^2)^{\frac{1}{2}} \overline{D}, p^{\beta}).\end{align*} 
Substituting this back into our previous expression, we find that the sum over primitive even $\chi$ is given by 
\begin{align*} (-1)^{k+1} \left( \frac{\varphi(p^{\beta})}{2} \right)\frac{\omega(N) \tau(\omega)^4}{(\vert D \vert p^{\beta})^2} 
 \sum_{m \geq 1\atop (m, p N)=1} \frac{\omega(m)}{m}  \sum_{ {n \geq 1 \atop (n, p)=1}} 
&\left(\sum_A \rho(A)r_A(n) \right)\frac{\lambda(n)}{n^{\frac{1}{2}}}V_{k+1} \left( \frac{m^2 n }{N \vert D \vert p^{2(\alpha + \beta)}} \right) \\
&\times \operatorname{Kl}_4( \pm (m^2 n \overline{N}^2)^{\frac{1}{2}} \overline{D}, p^{\beta}). \end{align*} 
Now, taking the sum over all ring class characters $\rho$ of conductor $p^{\alpha}$, 
and applying $\rho$-orthogonality relations to the inner $A$-sum, we obtain the identity  
\begin{align*} S_{3, Z} &= (-1)^{k+1} \left( \frac{\varphi(p^{\beta})}{2} \right) \# \operatorname{Pic}(\mathcal{O}_{p^{\alpha}}) 
&\\ &\times \frac{\omega(N) \tau(\omega)^4}{(\vert D \vert p^{\beta})^2} 
 \sum_{m \geq 1\atop (m, p N)=1} \frac{\omega(m)}{m}  \sum_{ {n \geq 1 \atop (n, p)=1}} 
\frac{r(n) \lambda(n)}{n^{\frac{1}{2}}}V_{k+1} \left( \frac{m^2 n }{Z N \vert D \vert p^{2(\alpha + \beta)}} \right) 
\operatorname{Kl}_4( \pm (m^2 n \overline{N}^2)^{\frac{1}{2}} \overline{D}, p^{\beta}). \end{align*} 
Here again, $r(n)$ denotes the number of ideals in the principal class ${\bf{1}} \in \operatorname{Pic}(\mathcal{O}_{p^{\alpha}})$ of norm $n$, which 
for integers $n$ coprime to $p$ coincides with the number of ideals in the principal class of $\operatorname{Pic}(\mathcal{O}_K)$ of norm $n$. \end{proof} 

\begin{corollary}\label{RTp}

Assume again that the discriminant $D$ is coprime to $p$, so that $\omega\chi^2$ has conductor $\vert D \vert p^{\beta}$. \\

\noindent (i) If $\beta =1$, then we have for each integer $c \geq 1$ prime to $p$ the summation formula
\begin{align*} \sum_{\chi \bmod p^{\beta} \atop \chi(-1) = 1, \operatorname{ primitive}} \chi(c) \tau(\omega\chi^2)^4 
&= \tau(\omega)^4  \left( \frac{\varphi(p)}{2} - 1 \right) \left(  \operatorname{Kl}_4( \pm \overline{c}^{\frac{1}{2}} \overline{D}, p)  -1 \right). \end{align*} 

\noindent (ii) If $\beta =1$, then the sum $S_{3, Z}$ is given equivalently by the expression  
\begin{align*} &(-1)^{k+1} \# \operatorname{Pic}(\mathcal{O}_{p^{\alpha}}) \frac{\omega(N) \tau(\omega)^4}{(\vert D \vert p^{\beta})^2} 
\sum_{m \geq 1\atop (m, p N)=1} \frac{\omega(m)}{m}  \sum_{ {n \geq 1 \atop (n, p)=1}} 
\frac{r(n) \lambda(n)}{n^{\frac{1}{2}}} V_{k+1} \left( \frac{m^2 n}{Z N \vert D \vert p^{2(\alpha + \beta)}} \right) \\ &\times 
\left(  \left( \frac{\varphi(p)}{2} - 1 \right) \operatorname{Kl}_4( \pm (m^2 n \overline{N}^2)^{\frac{1}{2}} \overline{D}, p) - 1 \right). \end{align*} 
\end{corollary}

\begin{proof} 

The calculations are given by a minor variation of those for Proposition \ref{RT} above as follows. 
For (i), we open up the sum over primitive even Dirichlet characters $\chi \bmod p$,
\begin{align*} \sum_{\chi \bmod p \atop \chi(-1) = 1, \operatorname{primitive}} \chi(c) \tau(\omega \chi^2)^4 
&= \sum_{\chi \mod p \atop \chi(-1)=1, \operatorname{primitive}} \chi(c) 
\sum_{z_1, \cdots, z_4 \bmod \vert D \vert p } \omega\chi^2(z_1 \cdots z_4) e \left( \frac{z_1 + \cdots + z_4}{p}\right) \\
&= \sum_{z_1, \cdots, z_4 \bmod \vert D \vert p } \omega (z_1 \cdots z_4) e \left( \frac{z_1 + \cdots + z_4}{p}\right) 
\sum_{\chi \bmod p \atop \chi(-1) = 1, \operatorname{primitive}} (z_1^2 \cdots z_4^2 c). \end{align*}
Applying the relation $(\ref{QOp})$ to the inner $\chi$-sum in latter expression, we then obtain 
\begin{align*} \left( \frac{\varphi(p)}{2} - 1 \right) \sum_{z_1, \cdots, z_4 \operatorname{mod} \vert D \vert p \atop z_1^2 \cdots z_4^2 c \equiv \pm 1 \bmod p } 
\omega (z_1 \cdots z_4) e \left( \frac{z_1 + \cdots + z_4}{ \vert D \vert p}\right)
-\sum_{z_1, \cdots, z_4 \bmod \vert D \vert p \atop z_1^2 \cdots z_4^2 c \not\equiv \pm1 \bmod p} 
\omega (z_1 \cdots z_4) e \left( \frac{z_1 + \cdots + z_4}{\vert D \vert p}\right), \end{align*}
which after Lemma \ref{CRT} can be expressed as the stated formula 
\begin{align*} &\tau(\omega)^4 \left( \left( \frac{\varphi(p)}{2} -1 \right)
\sum_{x_1, \cdots, x_4 \operatorname{mod} p \atop x_1 \cdots x_4 \equiv \pm \overline{c}^{\frac{1}{2}}\overline{D} \operatorname{mod} p } 
e \left( \frac{x_1 + \cdots + x_4}{p}\right) 
-\sum_{x_1, \cdots, x_4 \operatorname{mod} p \atop x_1 \cdots x_4 \not\equiv \pm \overline{c}^{\frac{1}{2}} \overline{D} \operatorname{mod} p} 
e \left( \frac{x_1 + \cdots + x_4}{p}\right) \right) \\ 
&= \tau(\omega)^4 \left( \left( \frac{\varphi(p)}{2} -1 \right) \operatorname{Kl}_4( \pm \overline{c}^{\frac{1}{2}} \overline{D}, p) - (-1)^4 \right). \end{align*}

To derive the stated formula for (ii), we simply use this formula to compute of the corresponding $\chi$-sum in the previous discussion 
(in the proof of Proposition \ref{RT} (ii)), with all other steps being the same. \end{proof}

Again, we can compute the sum $S_{4, Z}$ in a completely analogous way as
\begin{align*} S_{4, Z} &= (-1)^{k+1} \left( \frac{\varphi(p^{\beta})}{2} \right) \frac{\omega(N) \tau(\omega)^4}{(\vert D \vert p^{\beta})^2} 
\sum_{1 \leq x \leq \alpha - 1} \# \operatorname{Pic}(\mathcal{O}_{p^{x}})  \\
&\times \sum_{m \geq 1\atop (m, p N)=1} \frac{\omega(m)}{m}  \sum_{ {n \geq 1 \atop (n, p)=1}} 
\frac{r(n) \lambda(n)}{n^{\frac{1}{2}}} \mathfrak{V}_{k+1} \left( \frac{m^2n }{Z N \vert D \vert p^{2(x + \beta)}} \right) 
\operatorname{Kl}_4( \pm (m^2 n \overline{N}^2)^{\frac{1}{2}} \overline{D}, p^{\beta}) . \end{align*}  
if $\beta \geq 2$, as as 
\begin{align*} S_{4, Z}&= (-1)^{k+1} \frac{\omega(N) \tau(\omega)^4}{(\vert D \vert p^{\beta})^2} 
\sum_{1 \leq x \leq \alpha - 1} \# \operatorname{Pic}(\mathcal{O}_{p^{x}})  \\
&\times \sum_{m \geq 1\atop (m, p N)=1} \frac{\omega(m)}{m}  \sum_{ {n \geq 1 \atop (n, p)=1}} 
\frac{r(n) \lambda(n)}{n^{\frac{1}{2}}} \mathfrak{V}_{k+1} \left( \frac{m^2 n }{Z N \vert D \vert p^{2(x + \beta)}} \right) 
\left( \left( \frac{\varphi(p)}{2} -1 \right) \operatorname{Kl}_4( \pm (m^2 n \overline{N}^2)^{\frac{1}{2}} \overline{D}, p) - 1 \right) \end{align*} 
if $\beta = 1$. Putting together these expressions for $S_{1, Z} - S_{2, Z} + S_{3, Z} - S_{4, Z}$, we derive the following result:

\begin{proposition}\label{haf} 

Fix integers $\alpha \geq 0$ and $ \beta \geq 0$.
Let $\mathcal{W} = \rho \chi \circ {\bf{N}} \in X_{\alpha, \beta}$ be a Hecke character of $K$, as in $(\ref{factorization})$, 
with $\rho$ a primitive ring class character of conductor $p^{\alpha}$, and $\chi$ a primitive even Dirichlet character of conductor $p^{\beta}$.
We have for either choice of $k \in \lbrace 0, 1 \rbrace$ the following formula for the corresponding average 
\begin{align*} H^{(k)}(\alpha, \beta) &= \frac{2}{\# \operatorname{Pic}(\mathcal{O}_{p^{\alpha}}) \varphi^{\star}(p^{\beta})} 
\sum_{\rho \in \operatorname{Pic}(\mathcal{O}_{p^{\alpha}})^{\vee}} 
\sum_{\chi \bmod p^{\beta} \atop \chi(-1) = 1, \operatorname{primitive}} L^{(k)}(1/2, f \times \rho \chi \circ {\bf{N}}): \end{align*}
For any choice of real parameter $Z >1$, we have the formula 
\begin{align*} H^{(k)}(\alpha, \beta) = D_{k+1}(\alpha, \beta; Z) &+(-1)^{k+1} \widetilde{D}_{k+1}(\alpha, \beta; Z) \\
&- \sum_{0 \leq x \leq \alpha -1}  \frac{\# \operatorname{Pic} (\mathcal{O}_{p^x})}{\# \operatorname{Pic}(\mathcal{O}_{p^{\alpha}})}
 \left( \mathfrak{D}_{k+1}(x, \beta; Z) + (-1)^{k+1} \widetilde{\mathfrak{D}}_{k+1}(x, \beta; Z) \right). \end{align*}
   
Here, we define the leading sums 
\begin{align*} D_{k+1}(\alpha, \beta; Z) = \sum_{m \geq 1 \atop (m, Np^{\beta})=1} \frac{\omega(m)}{m} 
&\sum_{ {n \geq 1 \atop (n, p) =1} \atop m^2 n \equiv 1 \bmod p^{\beta}} \frac{r(n)\lambda(n)}{n^{\frac{1}{2}}} 
V_{k+1} \left(\frac{Z m^2 n }{ N \vert D \vert p^{2(\alpha + \beta)} } \right) \\ & - \frac{1}{\varphi(p)} \sum_{m \geq 1 \atop (m, N)=1} \frac{\omega(m)}{m} 
\sum_{ {n \geq 1 \atop m^2 n \equiv 1 \bmod p^{\beta-1}} \atop m^2 n \not \equiv 1 \bmod p^{\beta}} \frac{r(n)\lambda(n)}{n^{\frac{1}{2}}} 
V_{k+1}\left(\frac{Z m^2 n }{ N \vert D \vert p^{2(\alpha + \beta)} } \right) \end{align*}
if $\beta \geq 2$, and 
\begin{align*} \sum_{m \geq 1 \atop (m, N p^{\beta})=1} \frac{\omega(m)}{m} 
&\sum_{ {n \geq 1 \atop (n, p^{\alpha + \beta}) =1} \atop m^2 n \equiv 1 \bmod p} \frac{r(n)\lambda(n)}{n^{\frac{1}{2}}} 
V_{k+1}\left(\frac{ Z m^2 n }{ N \vert D \vert p^{2(\alpha + \beta)} } \right) \\ &- \frac{2}{p-3} \sum_{m \geq 1 \atop (m, N)=1} \frac{\omega(m)}{m} 
\sum_{ n \geq 1 \atop m^2 n \not\equiv 1 \bmod p} \frac{r(n)\lambda(n)}{n^{\frac{1}{2}}} 
V_{k+1}\left(\frac{Z m^2 n }{ N \vert D \vert p^{2(\alpha + \beta)} } \right) \end{align*} 
if $\beta =1$, with no congruence conditions $\bmod p^{\beta}$ if $\beta =0$.
Similarly, for $1 \leq x \leq \alpha - 1$, we define 
\begin{align*} \mathfrak{D}_{k+1}(x, \beta; Z) = \sum_{m \geq 1 \atop (m, N p^{\beta})=1} \frac{\omega(m)}{m} 
&\sum_{ {n \geq 1 \atop (n, p) =1} \atop m^2 n \equiv 1 \bmod p^{\beta}} \frac{r(n)\lambda(n)}{n^{\frac{1}{2}}} 
\mathfrak{V}_{k+1}\left(\frac{Z m^2 n }{ N \vert D \vert p^{2(x + \beta)} } \right) \\ &- \frac{1}{\varphi(p)} \sum_{m \geq 1 \atop (m, N)=1} \frac{\omega(m)}{m} 
\sum_{ {n \geq 1 \atop m^2 n \equiv 1 \bmod p^{\beta-1}} \atop m^2 n \not \equiv 1 \bmod p^{\beta}} \frac{r(n)\lambda(n)}{n^{\frac{1}{2}}} 
\mathfrak{V}_{k+1}\left(\frac{Z m^2 n }{ N \vert D \vert p^{2(x + \beta)} } \right)\end{align*} 
if $\beta \geq 2$, and 
\begin{align*} \mathfrak{D}_{k+1}(x, \beta; Z) = \sum_{m \geq 1 \atop (m, N p^{\beta})=1} \frac{\omega(m)}{m} 
&\sum_{ {n \geq 1 \atop (n, p) =1} \atop m^2 n \equiv 1 \bmod p} \frac{r(n)\lambda(n)}{n^{\frac{1}{2}}} 
\mathfrak{V}_{k+1}\left(\frac{Z m^2 n }{ N \vert D \vert p^{2(x +\beta)} } \right) \\ &- \frac{2}{p-3} \sum_{m \geq 1 \atop (m, N)=1} \frac{\omega(m)}{m} 
\sum_{ n \geq 1 \atop m^2 n \not\equiv 1 \bmod p} \frac{r(n)\lambda(n)}{n^{\frac{1}{2}}} 
\mathfrak{V}_{k+1}\left(\frac{Z m^2 n }{ N \vert D \vert p^{2(x+\beta)} } \right)\end{align*} 
if $\beta =1$, again with no congruence conditions $\operatorname{mod} p^{\beta}$ if $\beta =0$. We also define the twisted sums 
\begin{align*} \widetilde{D}_{k+1}(\alpha, \beta; Z) = \frac{\omega(N) \tau(\omega)^4}{(\vert D \vert p^{\beta})^2} \left( \frac{p}{\varphi(p)} \right)
\sum_{m \geq 1 \atop (m, N p^{\beta})=1} &\frac{\omega(m)}{m} 
\sum_{ n \geq 1 \atop (n, p^{\alpha + \beta})=1} \frac{r(n)\lambda(n)}{n^{\frac{1}{2}}} V_{k+1} \left(\frac{m^2 n }{Z N \vert D \vert p^{2(\alpha + \beta)}} \right) \\
&\times  \operatorname{Kl}_4( \pm (m^2 n \overline{N}^2)^{\frac{1}{2}} \overline{D}, p^{\beta}) \end{align*} 
if $\beta \geq 2$, 
\begin{align*} \widetilde{D}_{k+1}(\alpha, \beta; Z) = \frac{\omega(N) \tau(\omega)^4}{(\vert D \vert p^{\beta})^2} 
\sum_{m \geq 1 \atop (m, N p^{\beta})=1} &\frac{\omega(m)}{m} 
\sum_{ n \geq 1 \atop (n, p^{\alpha + \beta})=1} \frac{r(n)\lambda(n)}{n^{\frac{1}{2}}} V_{k+1} \left(\frac{m^2 n }{N \vert D \vert p^{2(\alpha + \beta)}} \right) \\
&\times \left(  \operatorname{Kl}_4( \pm (m^2 n \overline{N}^2)^{\frac{1}{2}} \overline{D}, p) - \left( \frac{2}{p-3} \right) \right) \end{align*} 
for $\beta = 1$, and $\widetilde{D}_{k+1}(\alpha, 0; Z) = D_{k+1}(\alpha, 0; Z)$ for $\beta =0$. Similarly for $1 \leq x \leq \alpha - 1$, we define 
\begin{align*} \widetilde{\mathfrak{D}}_{k+1}(x, \beta; Z)  =
 \frac{\omega(N) \tau(\omega)^4}{(\vert D \vert p^{\beta})^2} \left( \frac{p}{\varphi(p)} \right) \sum_{m \geq 1 \atop (m, N p^{\beta})=1} &\frac{\omega(m)}{m} 
\sum_{ n \geq 1 \atop (n, p)=1} \frac{r(n)\lambda(n)}{n^{\frac{1}{2}}} \mathfrak{V}_{k+1} \left(\frac{m^2 n }{Z N \vert D \vert p^{2(x+\beta)}} \right) \\
&\times \operatorname{Kl}_4( \pm (m^2 n \overline{N}^2)^{\frac{1}{2}} \overline{D}, p^{\beta}) \end{align*} 
if $\beta \geq 2$, and 
\begin{align*} \widetilde{\mathfrak{D}}_{k+1}(x, \beta; Z) = \frac{\omega(N) \tau(\omega)^4}{(\vert D \vert p^{\beta})^2} 
 \sum_{m \geq 1 \atop (m, N p^{\beta})=1} &\frac{\omega(m)}{m} 
\sum_{ n \geq 1 \atop (n, p)=1} \frac{r(n)\lambda(n)}{n^{\frac{1}{2}}} \mathfrak{V}_{k+1} \left(\frac{m^2 n }{Z N \vert D \vert p^{2(x+ \beta)}} \right) \\
&\times \left( \operatorname{Kl}_4( \pm (m^2 n \overline{N}^2)^{\frac{1}{2}} \overline{D}, p) -\left( \frac{2}{p-3} \right)  \right) \end{align*} 
if $\beta = 1$, and $\widetilde{\mathfrak{D}}_{k+1}(\alpha, 0) = \mathfrak{D}_{k+1}(\alpha, 0)$ if $\beta = 0$. 

\end{proposition}\label{cycformula} 

\begin{proof} We simply evaluate  
\begin{align*} \frac{2}{\# \operatorname{Pic}(\mathcal{O}_{p^{\alpha}}) \varphi^{\star}(p^{\beta})} 
\sum_{\mathcal{W} \in X_{\alpha, \beta}} L^{(k)}(1/2, f \times \mathcal{W}) 
&= \frac{2}{\# \operatorname{Pic}(\mathcal{O}_{p^{\alpha}}) \varphi^{\star}(p^{\beta})} \left( S_{1, Z} - S_{2,Z} + S_{3, Z} - S_{4, Z} \right). \end{align*} \end{proof}

Skipping over the partial summation for ring class exponents $x$, we also derive the following simpler result: 

\begin{proposition}\label{CAF} 

Fix integers $\alpha \geq 0$ and $ \beta \geq 1$. Fix a primitive ring class character $\rho$ of conductor $p^{\alpha}$.
Given an integer $n \geq 1$ prime to $p$, let us then write 
\begin{align*} c_{\rho}(n) &= \sum_{A \in \operatorname{Pic}(\mathcal{O}_{p^{\alpha}})} r_A(n) \rho(A) \end{align*}
to denote the corresponding coefficient in the Dirichlet series expansion $(\ref{integralDirichlet})$. The one-variable average
\begin{align*} C(\rho, \beta) 
&:= \frac{2}{\varphi^{\star}(p^{\beta})} \sum_{\chi \bmod p^{\beta} \atop \chi(-1) = 1, \operatorname{primitive}} L(1/2, f \times \rho \chi \circ {\bf{N}}) \end{align*}
over primitive even Dirichlet characters $\chi \bmod p^{\beta}$ is given for any choice of real parameter $Z >0$ by the formula 
\begin{align*} C(\rho, \beta) &= D_1(\rho, \beta; Z) + \widetilde{D}_1(\rho, \beta; Z). \end{align*}
Here, the leading sum is defined by  
\begin{align*} D_1(\rho, \beta; Z) = \sum_{m \geq 1 \atop (m, pN)=1} \frac{\omega(m)}{m} 
&\sum_{ {n \geq 1 \atop (n, p) =1} \atop m^2 n \equiv 1 \bmod p^{\beta}} \frac{c_{\rho}(n)\lambda(n)}{n^{\frac{1}{2}}} 
V_{1}\left(\frac{Z m^2 n }{ N \vert D \vert p^{2(\alpha + \beta)} } \right) \\ 
& - \frac{1}{\varphi(p)} \sum_{m \geq 1 \atop (m, pN)=1} \frac{\omega(m)}{m} 
\sum_{ {n \geq 1 \atop m^2 n \equiv 1 \bmod p^{\beta-1}} \atop m^2 n \not \equiv 1 \bmod p^{\beta}} \frac{c_{\rho}(n)\lambda(n)}{n^{\frac{1}{2}}} 
V_{1}\left(\frac{Z m^2 n }{ N \vert D \vert p^{2(\alpha + \beta)} } \right) \end{align*} if $\beta \geq 2$, and 
\begin{align*} D_1(\rho, \beta; Z)  = \sum_{m \geq 1 \atop (m, pN)=1} \frac{\omega(m)}{m} 
&\sum_{ {n \geq 1 \atop (n, p^{\alpha + \beta}) =1} \atop m^2 n \equiv 1 \bmod p} \frac{c_{\rho}(n)\lambda(n)}{n^{\frac{1}{2}}} 
V_{1}\left(\frac{Z m^2 n }{ N \vert D \vert p^{2(\alpha + \beta)}} \right) \\ 
&- \frac{2}{p-3} \sum_{m \geq 1 \atop (m, pN)=1} \frac{\omega(m)}{m} 
\sum_{ n \geq 1 \atop m^2 n \not\equiv 1 \bmod p} \frac{r(n)\lambda(n)}{n^{\frac{1}{2}}} 
V_{1}\left(\frac{Z m^2 n }{ N \vert D \vert p^{2(\alpha + \beta)} } \right) \end{align*}
if $\beta =1$; the twisted sum is defined by 
\begin{align*} \widetilde{D}_1(\rho, \beta; Z) = \frac{\omega(N) \tau(\omega)^4}{(\vert D \vert p^{\beta})^2}  \left( \frac{p}{\varphi(p)} \right)
\sum_{m \geq 1 \atop (m, pN)=1} &\frac{\omega(m)}{m} 
\sum_{ n \geq 1 \atop (n, p)=1} \frac{c_{\rho}(n)\lambda(n)}{n^{\frac{1}{2}}} V_{1} \left(\frac{Z m^2 n }{N \vert D \vert p^{2(\alpha + \beta)}} \right) \\
&\times  \operatorname{Kl}_4( \pm (m^2 n \overline{N}^2)^{\frac{1}{2}} \overline{D}, p^{\beta}) \end{align*} 
if $\beta \geq 2$, and by 
\begin{align*} \widetilde{D}_1(\rho, \beta) = \frac{\omega(N) \tau(\omega)^4}{(\vert D \vert p^{\beta})^2} 
\sum_{m \geq 1 \atop (m, pN)=1} &\frac{\omega(m)}{m} 
\sum_{ n \geq 1 \atop (n, p)=1} \frac{c_{\rho}(n)\lambda(n)}{n^{\frac{1}{2}}} V_{1} \left(\frac{m^2 n }{N \vert D \vert p^{2(\alpha + \beta)}} \right) \\
&\times \left( \operatorname{Kl}_4( \pm (m^2 n \overline{N}^2)^{\frac{1}{2}} \overline{D}, p) - \left( \frac{2}{p-3} \right) \right) \end{align*} if $\beta = 1$.

\end{proposition}

\begin{proof} The proof works in the same way as for Proposition \ref{haf}, using the the approximate functional equation $(\ref{cvformula})$ with the 
orthogonality relations of $(\ref{QO})$ and $(\ref{QOp})$, but without taking the average over all ring class characters of conductor $p^{\alpha}$
(and hence without the need to use partial summation as in Lemma \ref{ps2v}). \end{proof}
  
\section{Self-dual estimates} 

Let us first estimate the average $H^{(k)}(\alpha, 0)$ for any $\alpha \geq 0$, taking the cyclotomic part to be trivial $\beta =0$. 
Hence, we average over ring class characters of conductor of a given conductor $p^{\alpha}$. Note that we could in fact fix any 
integer $\beta \geq 1$ in this discussion, and allow for $\alpha \geq 0$ to vary using the same method of approach. 
We treat this simpler setting to keep the exposition light, leaving the more general setting for the companion work \cite{VO3}.
We shall also take the unbalancing parameter $Z > 0$ in the approximate functional equation to be $Z=1$, i.e.~so that the 
approximate functional equation is balanced, and hence suppress the $Z$ from all notations for this section.
Our starting point here is the corresponding average formula of Proposition \ref{haf}.

\subsection{Strategy} 

We start with the following preliminary reduction, which is perhaps more of an observation:

\begin{lemma}\label{haf2} 

Keep the setup of Proposition \ref{haf}. Assume $\alpha \geq 1$. There exists a constant $\eta_0>0$ such that 
\begin{align*} H^{(k)}(\alpha, 0) = D_{k+1}(\alpha, 0) +(-1)^{k+1} \widetilde{D}_{k+1}(\alpha, 0) + O_{f, D}\left((p^{2\alpha})^{-\eta_0} \right)
&= 2D(\alpha, 0) + O((p^{-2 \alpha \eta_0})). \end{align*}  \end{lemma} 

\begin{proof} 

Let us first consider the weighting factors appearing in the extra term of Proposition \ref{haf}. Thus, fix an integral exponent 
$0 \leq x \leq \alpha -1$. Recall that $\# \operatorname{Pic}(\mathcal{O}_{p^x})$ is given by Dedekind's classical formula 
\begin{align}\label{Dedekind} \# \operatorname{Pic}(\mathcal{O}_{p^x}) &= \frac{h(\mathcal{O}_K) p^x }{[\mathcal{O}_K^{\times}: 
\mathcal{O}_{p^x}^{\times}]} \left( 1 - \left( \frac{D}{p}\right) \frac{1}{p}\right). \end{align} 
Here, $\mathcal{O}_K$ is the ring of integers of $K$, and $h(\mathcal{O}_K)$ the cardinality of its associated ideal class group. 
Since the unit index $[\mathcal{O}_K^{\times}:  \mathcal{O}_{p^x}^{\times}]$ is equal to one by our hypotheses, we find that 
\begin{align*} \frac{\# \operatorname{Pic}(\mathcal{O}_{p^{x}}) }{ \# \operatorname{Pic}(\mathcal{O}_{p^{\alpha}})} 
&= \left( \frac{p^{x}}{p^{\alpha}}\right) < 1. \end{align*} 
Using these observations, we claim that it will suffice will suffice to estimate the sums of terms 
\begin{align*} \mathfrak{D}_{k+1}(\alpha-1, 0) + (-1)^{k+1}\widetilde{\mathfrak{D}}_{k+1,}(\alpha - 1, 0) x= 2\mathfrak{D}_{k+1}(\alpha-1, 0). \end{align*} 
where
\begin{align}\label{sde} 2 \mathfrak{D}_{k+1}(\alpha - 1, 0) 
&= \sum_{m \geq 1 \atop (m, N)=1} \frac{\omega(m)}{m} \sum_{n \geq 1\atop (n, p^{\alpha-1}) =1} \frac{r(n) \lambda(n)}
{n^{\frac{1}{2}}} \mathfrak{V}_{k+1} \left( \frac{m^2 n}{N \vert D \vert p^{2(\alpha - 1)}}\right). \end{align} 
Now, we can estimate $(\ref{sde})$ in the same way as we shall estimate the leading sum $2 D_{k+1}(\alpha, 0; 1)$ 
below to deduce the result. In brief, we shall first consider the contribution from the $b=0$ terms in the parametrization 
$(\ref{count})$ of the counting function $r(n)$ described above. Expanding out these contributions, we obtain  
\begin{align*}2 \sum_{m \geq 1 \atop (m, N)=1} \frac{\omega(m)}{m} 
\sum_{a \geq 1 \atop (a, p^{\alpha - 1})=1} \frac{\lambda(a^2)}{a} \mathfrak{V}_{k+1} \left( \frac{m^2 a^2}{N \vert D \vert p^{2(\alpha -1)} }\right). \end{align*} 
By definition of the function $\mathfrak{V}_{k+1}$, this contribution is equivalent to
\begin{align*} \int_{\Re(s)=2}  \sum_{m \geq 1 \atop (m, N)=1} \frac{\omega(m)}{m^{2s+1}} \sum_{a \geq 1 \atop (a, p^{\alpha - 1})=1} 
\frac{\lambda(a^2)}{a^{2s+1}} \widehat{V}_{k+1}(s)(N\vert D \vert p^{2 (\alpha - 1)})^{s} \left(p^{-2s} -1 \right) \frac{ds}{2 \pi i}, \end{align*} 
and after identification with appropriate $L$-values to the integral 
\begin{align}\label{resint0} \int_{\Re(s)=2} \frac{L^{(N)}(2s+1, \omega)}
{\zeta^{(N)}(4s+2)} L^{(p^{\alpha-1})}(2s+1, \operatorname{Sym^2} f) \widehat{V}_{k+1}(s)
(N \vert D \vert p^{2(\alpha - 1)})^{s} \left(p^{-2s} -1 \right) \frac{ds}{2\pi i}. \end{align} 
Suppose now that we shift the range of integration to $\Re(s) = -2$, crossing a pole at $s=0$.
Observe that the residue of this pole must vanish thanks to the factor of $(p^{-2s} -1)$, which is not the case for the leading sums.
We can then deduce from the argument of Lemma \ref{residue} below that the remaining integrals are bounded, using
the fact that the cutoff functions $\mathfrak{V}_{k+1, j}$ away from this pole have the same decay behaviour as described 
in Lemma \ref{7.1} above for the cutoff functions $V_{k+1}$. The contribution from the $b \neq 0$ 
terms in the counting function $r(n)$ can then be bounded using the result of Proposition \ref{SDerror} below. \end{proof} 

So, Lemma \ref{haf2} gives us the preliminary estimate 
\begin{align*} H^{(k)}(\alpha, 0) = 2D_{k+1}(\alpha, 0) + O \left( p^{- \alpha\eta_0} \right). \end{align*}
Since $O \left(p^{-\alpha\eta_0} \right)$ tends to zero with $\alpha$, we reduce to estimating the leading sum
\begin{align*} 2 D_{k+1}(\alpha, 0) &= 2 \sum_{m \geq 1 \atop (m, N)=1} \frac{\omega(m)}{m} \sum_{n \geq 1 \atop (n, p^{\alpha})=1}
\frac{\lambda(n) r(n)}{n^{\frac{1}{2}}} V_{k+1} \left( \frac{m^2 n}{N \vert D \vert p^{2 \alpha}} \right) \end{align*} 
with $\alpha$. To do this, we first expand our sums in terms of the parametrization of $r(n)$ described in $(\ref{count})$, 
\begin{align*} r(n) &= \frac{1}{w} \# \left\lbrace (a, b) \in {\bf{Z}}^2 : q_1(a,b) = n \right\rbrace \end{align*}
for $q_1(a,b)$ the quadratic form defined in $(\ref{qf})$ above (see also the subsequent remark). Hence, we study
\begin{align}\label{Dexp} D_{k+1}(\alpha, 0) &= \frac{1}{w} \sum_{m \geq 1} \frac{\omega(m)}{m} \sum_{a, b \in {\bf{Z}} \atop q_1(a, b) \neq 0} 
\frac{\lambda(q_1(a, b))}{q_1(a, b)^{\frac{1}{2}}} V_{k+1} \left( \frac{m^2 q_1(a, b)}{N \vert D \vert p^{2 \alpha}} \right) \end{align}
Here, we first compute the contributions coming from the $b=0$ terms (cf.~\cite{Te} and \cite{TeT}),
these being the canonical residue terms for each of our estimates.  
The remaining contributions from $b \neq 0$ terms are then identified as certain Fourier-Whittaker coefficients
of certain other automorphic forms on $\operatorname{GL}_2({\bf{A}}_{\bf{Q}})$ or its metaplectic cover, 
and estimated using techniques from the spectral theory of shifted convolutions sums, in the style of \cite{TeT}.
Note that to derive bounds for these remaining sums, we shall require the best known approximations towards both the generalized
Ramanujan conjecture for $\operatorname{GL}_2({\bf{A}}_{\bf{Q}})$-automorphic forms, as well as the generalized Lindel\"of hypothesis
for $\operatorname{GL}_2({\bf{A}}_{\bf{Q}})$-automorphic forms in the level aspect. To be clear, we shall have to use the theorems of 
Kim-Sarnak \cite{KSh} and Blomer-Harcos \cite[Theorem 2]{BH07} for these respective approximations to derive suitable bounds for our estimates. 

\subsection{Calculation of the residue terms}  

Let us first describe the terms coming from the $b=0$ contributions in the counting functions $r(n)$ in the expansion $(\ref{Dexp})$ 
of $D_{k+1}(\alpha, 0)$. Recall that since the level $N$ of $f$ is assumed to be squarefree, we can define the symmetric square 
$L$-function $L(s, \operatorname{Sym^2} f)$ of $f$ by the Dirichlet series $(\ref{symm})$ above (first for $s \in {\bf{C}}$ with $\Re(s)>1$). 
Recall too that given an integer $M \geq 2$, we write $L^{(M)}(s, \operatorname{Sym^2} f)$ to denote the $L$-function $L(s, \operatorname{Sym^2} f)$ 
with the Euler factors at primes dividing $M$ removed.

\begin{lemma}\label{residue} We have the following estimates for the $b=0$ contributions in sums $2 D_{k+1}(\alpha, 0)$, i.e.~
after expanding out the counting function $r(n)$ according to the parametrization $(\ref{count})$ above: \\

\begin{itemize}

\item[(i)] If the pair $(f, \rho)$ is generic (i.e.~$k=0$), then for any choice of constant $C>0$, 
\begin{align*} \sum_{m \geq 1 \atop (m, N)=1} \frac{\omega(m)}{m} \sum_{a \in {\bf{Z}} }
\frac{\lambda(q_1(a, 0))}{q_1(a, 0)^{\frac{1}{2}}} V_{1} \left( \frac{m^2 q_1(a, 0)}{N \vert D \vert p^{2 \alpha}} \right) 
&= 2 L(1, \omega) \cdot \frac{L^{(p^{\alpha})}(1, \operatorname{Sym^2} f)}{\zeta^{(p^{\alpha})}(2)} + O_{C}\left( (\vert D \vert p^{2 \alpha})^{-C} \right). \end{align*}

\item[(ii)] If the pair $(f, \rho)$ is exceptional (i.e.~$k=1$), then for any choice of constant $C>0$,
\begin{align*} \sum_{m \geq 1 \atop (m, N)=1} \frac{\omega(m)}{m} \sum_{a \in {\bf{Z}} }
\frac{\lambda(q_1(a, 0))}{q_1(a, 0)^{\frac{1}{2}}} &V_{2} \left( \frac{m^2 q_1(a, 0)}{N \vert D \vert p^{2 \alpha}} \right) 
= 2 L(1, \omega) \cdot \frac{L^{(p^{\alpha})}(1, \operatorname{Sym^2} f )}{\zeta^{(p^{\alpha})}(2)} 
\left[ \frac{1}{2}\log(N \vert D \vert p^{2 \alpha}) + \frac{L'}{L}(1, \omega)  \right. \\  & \left. + \frac{L'^{(p^{\alpha})}}{L^{(p^{\alpha})}}(1, \operatorname{Sym^2} f) 
- \frac{\zeta'^{(p^{\alpha})}}{\zeta^{(p^{\alpha})}}(2) - \gamma- \log 2 \pi \right] + O_{C}( (\vert D \vert p^{2 \alpha})^{-C} ).\end{align*}

\end{itemize}

\end{lemma} 

\begin{proof} 

In either case on $k \in \lbrace 0, 1 \rbrace$, the contribution from $b=0$ to $2D_{k+1}(\alpha, 0)$ is given by the expression
\begin{align*} 2 \sum_{m \geq 1} \frac{\omega(m)}{m} \sum_{a \geq 1 \atop (a, p^{\alpha})=1} 
\frac{\lambda(a^2)}{a} V_{k+1}\left( \frac{m^2 a^2}{N\vert D \vert p^{2 \alpha}}\right), \end{align*} 
which after opening up the definition of the contour integral $V_{k+1}(y)$ is the same as the contour integral
\begin{align*} 2 \int_{\Re(s)=2}  \sum_{m \geq 1} \frac{\omega(m)}{m^{2s+1}}\sum_{a \geq 1 \atop (a, p^{\alpha})=1} 
\frac{\lambda(a^2)}{a^{2s+1}} \widehat{V}_{k+1}(s)(N\vert D \vert p^{2 \alpha})^{s} \frac{ds}{2 \pi i}, \end{align*} 
and which by $(\ref{symm})$ is the same as  
\begin{align}\label{resint1} 2 \int_{\Re(s)=2} \frac{L(2s+1, \omega)}{\zeta^{(p^{\alpha})}(4s+2)} L^{(p^{\alpha})}(2s+1, \operatorname{Sym^2} f) 
\widehat{V}_{k+1}(s)(N\vert D \vert p^{2 \alpha})^{s}\frac{ds}{2\pi i}. \end{align} 
Now, recall that we define 
\begin{align*} \widehat{V}_{k+1}(s) = \pi L_{\infty}(s+1/2)G_{k+1}(s) = \frac{L_{\infty}(s+1/2)}{L_{\infty}(1/2)}G_{k+1}(s),\end{align*}
where $G_{k+1}(s)= g^*(s)s^{-(k+1)}$ (with $g^*(0)=1$). Moving the line of integration leftward to $\Re(s) = -2$, 
we cross a pole at $s=0$ of the stated residue(s) given above. We refer to \cite[Lemma 7.2]{Te} for more details in the $k=1$ case,
where we use the fact that the Mellin transform $\widehat{V}_2(s)$ in the definition of $V_2(y)$ behaves as
\begin{align*} \widehat{V}_2(s) &= \frac{1}{s^2} - 2 \cdot \frac{\gamma + \log 2 \pi}{s} + O \left( 1 \right) \text{   as $s \rightarrow 0$}. \end{align*}
The remaining integral is seen easily to be bounded as $O_C \left( \left( N \vert D \vert p^{2 \alpha} \right)^{-C} \right)$ in either case using the 
Stirling approximation formula to describe $\widehat{V}_{k+1}(s)$ as $\Im(s) \rightarrow  \pm \infty$. \end{proof} 

\subsection{Main estimates}

Let us now estimate the contributions of $D_{k+1}(\alpha, 0)$ coming from $b \neq 0$ terms appearing in our parametrization 
$(\ref{count})$ of the counting function $r(n)$. To be more explicit, we now estimate 
\begin{align*} \sum_{m \geq 1} \frac{\omega(m)}{m} \sum_{a, b \in {\bf{Z}} \atop b \neq 0} 
\frac{\lambda(q_1(a, b))}{q_1(a, b)^{\frac{1}{2}}} V_{k+1} \left( \frac{m^2 q_1(a, b)}{N \vert D \vert p^{2 \alpha}} \right). \end{align*}
Observe that by the rapid decay of $V_{k+1}$, it will suffice to estimate the truncated sum defined by 
\begin{align}\label{trunc} D_{k+1}^{\dagger}(\alpha, 0) &= \sum_{m \geq 1} \frac{\omega(m)}{m} \sum_{b \in {\bf{Z}}, b \neq 0 \atop 
m^2 q_1(0, b) \leq (N \vert D \vert p^{2 \alpha})} \sum_{a \in {\bf{Z}}} 
\frac{\lambda(q_1(a, b))}{q_1(a, b)^{\frac{1}{2}}} V_{k+1} \left( \frac{m^2 q_1(a, b)}{N \vert D \vert p^{2 \alpha}} \right). \end{align}
To be clear, the rapid decay of the cutoff function $V_{k+1}$ reduces us to bounding the contribution from integers $m \geq 1$
and pairs of integers $(a, b)$ (with $b \neq 0$) such that $m^2 q_1(a, b) \leq N \vert D \vert p^{2 \alpha}$. As we shall see however, 
there is extra cancellation to detect in taking the sum over all integers $a \in {\bf{Z}}$ for each corresponding pair of admissible integers 
$(m, b)$ in this truncated sum. In this spirit, we shall bound the corresponding contributions in the style of Templier \cite[Proposition 7.2]{Te}, 
using the more general representation theoretic framework of Templier-Tsimerman \cite[Theorem 1, (6.13)]{TeT}. This allows us to derive suitable 
bounds in the setting of ring class exponent $\alpha = 0$ corresponding to class group characters. To deal with the setting of $\alpha \geq 1$ corresponding 
to ramified ring class characters, we shall have to exploit additional structure inherent in the $b$-sum to relate to constant coefficients of Eisenstein series. 

\begin{theorem}\label{SDerror} 

Suppose now that $f$ is any non-dihedral (possibly non-holomorphic) cuspidal modular form of arbitrary weight, level $N$, and nebentype character $\xi$,
with Fourier coefficients denoted by $\lambda(n) = \lambda_f(n)$. We have the following uniform estimates in either case on the generic root number 
$k \in \lbrace 0, 1 \rbrace$: \\

\begin{itemize}

\item[(i)] $D_{k+1}^{\dagger}(\alpha, 0) \ll_{f, \varepsilon} \left( N \vert D \vert p^{2 \alpha} \right)^{\frac{7}{16} + \varepsilon} \vert D \vert ^{-\frac{1}{2} + \varepsilon}$. \\

\item[(ii)] We have for some constant $\kappa >0$ the estimate 

\begin{align*} \sum_{m \geq 1} \frac{\omega \xi(m)}{m} \sum_{a, b \in {\bf{Z}} \atop q_1(a, b) \neq 0} \frac{\lambda(q_1(a, b))}{q_1(a, b)^{\frac{1}{2}}}
V_{k+1} \left( \frac{m^2 q_1(a, b)}{Y} \right) &= L(1, \omega \xi) \cdot 2 \mathfrak{L}^{(k)}(1, f) + O \left( Y^{-\kappa} \right), \end{align*} where
\begin{align*} \mathfrak{L}^{(k)}(1, f) &= \frac{2}{w} \cdot \begin{cases} \frac{ L(1, \operatorname{Sym}^2 f)}{L(2, \xi)} &\text{ if $k=0$} \\ 
\frac{L(1, \operatorname{Sym}^2 f )}{L(2, \xi)} \left[ \frac{1}{2}\log(N \vert D \vert p^{2 \alpha}) 
+ \frac{L'}{L}(1, \operatorname{Sym}^2 f) - \frac{L'}{L}(2, \xi) - \gamma- \log 2 \pi \right] &\text{ if $k=1$} \end{cases} \end{align*}
Here, the error term depends on the conductor of the form $f$ (and hence that of $\xi$) unless we assume that the absolute value 
of the fundamental discriminant $\vert D \vert$ is sufficiently large. 
\end{itemize}

\end{theorem} 

\begin{proof} 

Let us start with (i). Here, we modify the arguments of \cite[Proposition 7.2]{Te} and \cite[Theorem 2]{TeT} as follows, writing 
$\pi = \pi_f$ to denote the cuspidal $\operatorname{GL}_2({\bf{A}}_{\bf{Q}})$-automorphic form generated by $f$. Let us write $Y = N \vert D \vert p^{2 \alpha}$. 
Let us also fix an integral basis $[1, \vartheta]$ of $\mathcal{O}_K$, so that the truncated sum $D_{k+1}^{\dagger}(\alpha, 0)$ can be presented equivalently as 
\begin{align}\label{alttrunc} D_{k+1}^{\dagger}(\alpha, 0) &= \sum_{m \geq 1} \frac{\omega(m)}{m} 
\sum_{b \in {\bf{Z}}, b \neq 0 \atop b^2 m^2 \leq \frac{Y}{\vert \vartheta^2 \vert}}
\sum_{a \in {\bf{Z}}} \frac{\lambda(a^2 - b^2 \vartheta^2)}{(a^2 - b^2 \vartheta^2)^{\frac{1}{2}}} V_{k+1} \left( \frac{m^2 (a^2 - b^2 \vartheta^2)}{Y} \right).\end{align}
Fixing a pair of integers $(m, b)$ in the hyperboloid region $m b \leq \left( \frac{Y}{\vert \vartheta^2 \vert} \right)^{\frac{1}{2}}$ defining the second sum in this 
latter expression, the corresponding $a$-sum can be bounded by the argument of \cite[Theorem 1 and $\S 6.7$ (6.13)]{TeT} as 
\begin{align} \label{abound} \sum_{a \in {\bf{Z}}} \frac{\lambda(a^2 - b^2 \vartheta^2)}{(a^2 - b^2 \vartheta^2)^{\frac{1}{2}}} 
V_{k+1} \left( \frac{m^2 (a^2 - b^2 \vartheta^2)}{Y} \right) \ll_{\pi} \left( \frac{Y}{m^2} \right)^{\frac{1}{4}} \vert b^2 \vartheta^2 \vert^{\delta_0 - \frac{1}{2}}
\left( \frac{ \vert b^2 \vartheta^2 \vert m^2}{Y} \right)^{\frac{1}{2} - \frac{\theta_0}{2}}. \end{align}
Here, $0 \leq \theta_0 \leq 1/2$ denotes the best known approximation towards the generalized Ramanujan conjecture for 
$\operatorname{GL}_2({\bf{A}}_{\bf{Q}})$-automorphic forms (with $\theta_0 = 0$ conjectured). Note that we can take $\theta_0 = 7/64$
thanks to Kim-Sarnak \cite{KSh}. As well, $0 \leq \delta_0 \leq 1/4$ denotes the best known approximation towards the generalized 
Lindel\"of hypothesis for $\operatorname{GL}_2({\bf{A}}_{\bf{Q}})$-automorphic forms in the level aspect (with $\delta_0 = 0$ conjectured).
We can and later do take this exponent to be $\delta_0 = 3/16$ thanks to Blomer-Harcos \cite{BH07}. 
Using $(\ref{abound})$ in the presentation $(\ref{alttrunc})$, we then obtain the bound 
\begin{align*} D_{k+1}^{\dagger}(\alpha, 0)  \ll_{\pi} Y^{ \frac{\theta_0}{2} - \frac{1}{4} } 
\vert \vartheta^2 \vert^{\delta_0 - \frac{\theta_0}{2} } \sum_{m \geq 1} 
\sum_{b \neq 0 \in {\bf{Z}} \atop m b \leq \left(  \frac{Y}{\vert \vartheta^2 \vert } \right)^{ \frac{1}{2} } } 
\frac{b^{2 \left( \delta_0 - \frac{\theta_0}{2} \right)}}{m^{\frac{1}{2} + \frac{\theta_0}{2}}},\end{align*}
which after using that $\vartheta^2 = O( \vert D \vert )$ and that the number of lattice points under the hyperbola $m b = r$ 
is bounded above by $r \log r$ gives 
\begin{align*} D_{k+1}^{\dagger}(\alpha, 0)  \ll_{\pi, \varepsilon} Y^{ \frac{\theta_0}{2} - \frac{1}{4} } 
\vert D \vert^{\delta_0 - \frac{\theta_0}{2} }
\left( \frac{Y}{ \vert D \vert} \right)^{\frac{1}{2} + (\delta_0 - \frac{\theta_0}{2}) + \varepsilon}
= Y^{\delta_0 + \frac{1}{4} + \varepsilon} \vert D \vert^{-\frac{1}{2} + \varepsilon}. \end{align*} 
Hence, expanding out $Y = N \vert D \vert p^{2 \alpha}$, and using the admissible exponent $\delta_0 = 3/16$, we obtain 
\begin{align*} D_{k+1}^{\dagger}(\alpha, 0) 
\ll_{\pi, \varepsilon} \left( N \vert D \vert p^{2 \alpha} \right)^{\frac{7}{16} + \varepsilon} \vert D \vert^{-\frac{1}{2} + \varepsilon}. \end{align*}

To deal with the more general case of ramification ($p^{2 \alpha} \gg \vert D \vert$) for (ii), we must first recall how the proof of 
$(\ref{abound})$ works, following the arguments of \cite[Theorem 1]{TeT} and \cite[$\S 6$, (6.13)]{TeT}. In fact, we provide  
additional justification for the stated estimate of \cite[Theorem 1]{TeT} by explaining how the integral presentations work for cuspidal 
$\operatorname{GL}_2({\bf{A}}_{\bf{Q}})$-automorphic forms, as the proofs in \cite{TeT} in this respect are only given 
for cuspidal forms on $\operatorname{SL}_2({\bf{R}})$. In doing this, we shall also explain how to relate both the truncated 
sum $D_{k+1}^{\dagger}(\alpha, 0)$ and the full sum $D_{k+1}(\alpha, 0)$ to Fourier-Whittaker coefficients of certain forms 
forms on $\operatorname{GL}_2({\bf{A}}_{\bf{Q}})$ and its metaplectic cover. To begin, recall that a decomposable new vector 
$\phi \in V_{\pi}$ can be viewed as a cuspidal $\operatorname{GL}_2({\bf{A}}_{\bf{Q}})$-automorphic form,
and as such has the following expansion (cf.~\cite[$\S 2.5$]{TeT} with \cite[$\S 2.5$]{BH10}): Given $x \in {\bf{A}}_{\bf{Q}}$ an adele, 
and $y = y_f y_{\infty} \in {\bf{A}}_{\bf{Q}}^{\times}$ an idele split into its finite component $y_f \in {\bf{A}}_{ {\bf{Q}}, f }^{\times}$ times its 
real component $y_{\infty} \in {\bf{R}}^{\times}$, and writing $\vert y \vert$ to denote the idele norm, we have
\begin{align*} \phi \left(  \left(\begin{array} {cc} y &  x \\ 0 & 1 \end{array}\right) \right) 
&= \sum_{\gamma \in {\bf{Q}}^{\times}} \rho_{\phi}(\gamma y_f) W_{\phi}(\gamma y_{\infty}) e(\gamma x) =
\rho_{\phi}(1) \sum_{\gamma \in {\bf{Z}} \atop \gamma \neq 0} \frac{ \lambda (\gamma y_f)}{ \vert \gamma y_f \vert^{\frac{1}{2}} } 
W_{\phi}(\gamma y_{\infty}) e(\gamma x), \end{align*}
where the sum is supported on rational integers $\gamma \neq 0 \in {\bf{Z}}$, and the Whittaker integral $W_{\infty}(y_{\infty})$ is given by 
\begin{align*} W_{\phi}(y_{\infty}) &= W_{\phi} \left( \left(\begin{array} {cc} y_{\infty} &  ~ \\ ~ & 1 \end{array}\right) \right)
= \int_{  {\bf{A}}_{\bf{Q}} / {\bf{Q}} } \phi \left( \left(\begin{array} {cc} y_{\infty} &  x \\ 0 & 1 \end{array}\right) \right) e(-x) dx. \end{align*}
Let us now consider the theta series $\theta_Q$ attached to the quadratic form $Q(\gamma) = \gamma^2$. 
Viewed as a genuine automorphic form on the metaplectic cover of $\operatorname{GL}_2({\bf{A}}_{\bf{Q}})$, i.e.~corresponding 
to a modular form of half-integral weight, $\theta_{Q}$ has the Fourier-Whittaker expansion 
\begin{align*} \theta_Q \left( \left(\begin{array} {cc} y &  x \\ 0 & 1 \end{array}\right) \right) 
&= \vert y \vert^{\frac{1}{4}} \sum_{\gamma \in {\bf{Q}}} e \left( Q(\gamma)(x + i y) \right) 
= \vert y \vert^{\frac{1}{4}} \sum_{\gamma \in {\bf{Z}} } e \left( Q(\gamma)(x + i y) \right). \end{align*}
Using the orthogonality of additive characters on the compact abelian group ${\bf{A}}_{\bf{Q}} / {\bf{Q}}$, 
these characters being parametrized as $x \mapsto e(\gamma x)$ for $\gamma \in {\bf{Q}}$ varying, 
it is easy to see that for any vector $\phi \in V_{\pi}$, we have  
\begin{align*} \int_{ {\bf{A}}_{\bf{Q}} / {\bf{Q}} } &\phi \overline{\theta}_Q 
\left( \left(\begin{array} {cc} y_{\infty} &  x \\ 0 & 1 \end{array}\right) \right) e \left( - b^2 \vartheta^2 x \right) dx 
= \int_{ {\bf{A}}_{\bf{Q}} / {\bf{Q}} } \phi \left( \left(\begin{array} {cc} y_{\infty} &  x \\ 0 & 1 \end{array}\right) \right) 
\overline{\theta}_Q \left( \left(\begin{array} {cc} y_{\infty} &  x \\ 0 & 1 \end{array}\right) \right) e \left( - b^2 \vartheta^2 x \right) dx \\ 
&= \vert y_{\infty} \vert^{\frac{1}{4}} \rho_{\phi}(1) \int_{ {\bf{A}}_{\bf{Q}} / {\bf{Q}} } 
\sum_{\gamma_1 \in {\bf{Z}} \atop \gamma_1 \neq 0} \frac{ \lambda (\gamma_1)}{ \vert \gamma_1 \vert^{\frac{1}{2}} } 
W_{\phi}(\gamma_1 y_{\infty}) e(\gamma_1 x) \sum_{\gamma_2 \in {\bf{Z}}} e \left( i Q(\gamma_2) y_{\infty} \right) e(-Q(\gamma_2) x) e(-b^2 \vartheta^2 x) dx \\
&=  \vert y_{\infty} \vert^{\frac{1}{4}} \rho_{\phi}(1) \sum_{\gamma_1 \in {\bf{Z}} \atop \gamma_1 \neq 0} \frac{ \lambda (\gamma_1)}{ \vert \gamma_1 \vert^{\frac{1}{2}} } 
W_{\phi}(\gamma_1 y_{\infty}) \sum_{\gamma_2 \in {\bf{Z}}} e \left( i Q(\gamma_2) y_{\infty} \right)  
\int_{ {\bf{A}}_{\bf{Q}} / {\bf{Q}} } e \left( \gamma_1 x - Q(\gamma_2)x - b^2 \vartheta^2 x \right) dx \\ &= 
\vert y_{\infty} \vert^{\frac{1}{4}} \rho_{\phi}(1) \sum_{\gamma \in {\bf{Z}} \atop Q(\gamma) - b^2 \vartheta^2 \neq 0} 
\frac{\lambda (Q(\gamma) - b^2 \vartheta^2)}{ \vert Q(\gamma) - b^2 \vartheta^2 \vert^{\frac{1}{2}}} 
W_{\phi}\left( (Q(\gamma) - b^2 \vartheta^2) y_{\infty} \right) e (i y _{\infty} Q(\gamma)). \end{align*}
Now, the surjectivity of the archimedean Kirillov map can be used to show the following useful consequence 
(see \cite[Proposition 2.1]{TeT} and \cite[(37) and Lemma 3]{BH10}): Given an arbitrary smooth function $W \in L^2({\bf{R}}_{>0})$, 
there exists a new vector $\phi \in V_{\infty}$ such that $W_{\infty}(y_{\infty}) = W(y_{\infty})$ for any $y_{\infty} \in {\bf{R}}_{>0}$. 
We use this property as follows to derive a suitable integral presentation for each $a$-sum corresponding to each admissible 
pair $(m, b)$ in the truncated sum $D_{k+1}^{\dagger}(\alpha, 0)$ as follows. Fixing an integer $b$ in this sum, choosing 
$\phi_{b} \in V_{\pi}$ in such a way that $\rho_{\phi_b}(1)W_{\phi}(y_{\infty}) = V_{k+1}(y_{\infty}) e(-i y_{\infty}) e \left( i \cdot \frac{b^2 \vartheta^2}{Y} \right)$, 
we can then specialize the identity above to $y_{\infty} = \frac{m^2}{Y}$ to derive the identity 
\begin{align}\label{metID} \int_{ {\bf{A}}_{\bf{Q}} / {\bf{Q}} } \phi_b \overline{\theta}_Q 
\left( \left(\begin{array} {cc} \frac{m^2}{Y} &  x \\ 0 & 1 \end{array}\right) \right) e \left( - b^2 \vartheta^2 x \right) dx &=
\left( \frac{m^2}{Y} \right)^{\frac{1}{4}} \sum_{a \in {\bf{Z}} \atop a^2 - b^2 \vartheta^2 \neq 0} 
\frac{\lambda(a^2 - b^2 \vartheta^2)}{(a^2 - b^2 \vartheta^2)^{\frac{1}{2}}} V_{k+1} \left( \frac{m^2(a^2 - b^2 \vartheta^2)}{Y} \right).\end{align}
The bound of \cite[Theorem 1]{TeT} and more specifically \cite[$\S 6.7$, (6.13)]{TeT} applies to the integral on the left hand side 
of this identity $(\ref{metID})$, which has a natural identification as the Fourier-Whittaker coefficient at $ b^2 \vartheta^2$ of
the genuine metaplectic form $\phi_b \overline{\theta}_Q$ (which corresponds to a modular form of half-integral weight). 
In this way, it is clear that the truncated sum $D_{k+1}^{\dagger}(\alpha, 0)$ could be viewed as a finite sum of these coefficients,
although (as shown in (i)) the bounds we derive do not suffice to handle ramification when $\alpha \geq 1$. To deal with this ramified
setting, we exploit extra structure inherent in the $b$-sum, which is not seen via the metaplectic theta series $\theta_Q$. Rather, we 
shall now consider the binary theta series $\theta_{q_1}$ corresponding to the binary quadratic form $q_1$ defined in $(\ref{qf})$ above. 
Note that this theta series appears throughout the classical literature, and gives rise to a holomorphic modular form of weight one, level $\vert D \vert$,
and nebentype character $\omega = \omega_D$. Viewed as a $\operatorname{GL}_2({\bf{A}}_{\bf{Q}})$-automorphic form, 
it has (in the notations defined above) the Fourier-Whittaker expansion
\begin{align*} \theta_{q_1} \left( \left(\begin{array} {cc} y &  x \\ 0 & 1 \end{array}\right) \right) 
&= \vert y \vert^{\frac{1}{2}} \frac{1}{w} \sum_{\gamma_1, \gamma_2 \in {\bf{Q}}} e \left( q_1(\gamma_1, \gamma_2)(x + i y) \right) 
= \vert y \vert^{\frac{1}{2}} \frac{1}{w} \sum_{\gamma_1, \gamma_2 \in {\bf{Z}} } e \left( q_1(\gamma_1, \gamma_2)(x + i y) \right). \end{align*}
Using orthogonality of additive characters again, it is easy to see that for any $\phi \in V_{\pi}$ and $y_{\infty} \in {\bf{R}}^{\times}$ we have 
\begin{align*} \int_{ {\bf{A}}_{\bf{Q}} / {\bf{Q}} } \phi \overline{\theta}_{q_1} \left( \left(\begin{array} {cc} y_{\infty} &  x \\ 0 & 1 \end{array}\right) \right) dx  
= \rho_{\phi}(1) \vert y_{\infty} \vert^{\frac{1}{2}} \frac{1}{w} \sum_{a, b \in {\bf{Z}} \atop q_1(a, b) \neq 0}\frac{\lambda(q_1(a, b))}{q_1(a, b)^{\frac{1}{2}}} 
W_{\phi}(q_1(a, b) y_{\infty}) e(i q_1(a, b) y_{\infty}).\end{align*}
Choosing $\phi \in V_{\pi}$ so that $\rho_{\phi}(1) W_{\phi}(y_{\infty}) 
= V_{k+1}(y_{\infty}) e(-i y_{\infty})$, we then derive for any $y_{\infty} \in {\bf{R}}_{>0}$ the identity 
\begin{align}\label{doubleIP} \int_{ {\bf{A}}_{\bf{Q}} / {\bf{Q}} } \phi \overline{\theta}_{q_1} \left( \left(\begin{array} {cc} y_{\infty} &  x \\ 0 & 1 \end{array}\right) \right) dx  
= \vert y_{\infty} \vert^{\frac{1}{2}} \frac{1}{w} \sum_{a, b \in {\bf{Z}} \atop q_1(a, b) \neq 0}\frac{\lambda (q_1(a, b))}{q_1(a, b)^{\frac{1}{2}}} V_{k+1}(q_1(a, b) y_{\infty}).\end{align}
Now, observe that the integral in $(\ref{doubleIP})$ is simply the constant coefficient $\rho_{\phi \overline{\theta}_{q_1}, 0}(y)$ 
of the $\operatorname{GL}_2({\bf{A}}_{\bf{Q}})$-automorphic form $\phi \overline{\theta}_{q_1}$ evaluated at $y = y_{\infty}$. 
That is, the Fourier-Whittaker expansion of $\phi \overline{\theta}_{q_1}$ is given by 
\begin{align*} \phi \overline{\theta}_{q_1} \left( \left(\begin{array} {cc} y &  x \\ 0 & 1 \end{array}\right) \right) 
&= \rho_{\phi \overline{\theta}_{q_1}, 0}(y) + \sum_{\gamma \in {\bf{Q}}^{\times}} \rho_{\phi \overline{\theta}_{q_1}}(\gamma y_f) 
W_{\phi \overline{\theta}_{q_1}}(\gamma y_{\infty}) e(\gamma x), \end{align*} where 
\begin{align*} \rho_{\phi \overline{\theta}_{q_1}, 0}(y) 
&= \int_{ {\bf{A}}_{\bf{Q}} / {\bf{Q}} } \phi \overline{\theta}_{q_1} \left( \left(\begin{array} {cc} y &  x \\ 0 & 1 \end{array}\right) \right) dx.\end{align*}
As such, $\rho_{\phi \overline{\theta}_{q_1}, 0}(y)$ is a smooth function of $y$, of moderate growth. 
Now, observe that if we specialize $(\ref{doubleIP})$ to $y_{\infty} = \frac{m^2}{Y}$ for {\it{any}} real parameter $Y >0$, 
then take the (obviously-twisted) sum over $m$, we obtain
\begin{align*} \sum_{m \geq 1} \frac{\omega \xi(m)}{m^2} \left( Y^{\frac{1}{2}} \rho_{\phi \overline{\theta}_{q_1}, 0} \left( \frac{m^2}{Y} \right) \right)
&= \sum_{m \geq 1} \frac{\omega \xi (m)}{m} \frac{1}{w} \sum_{a, b \in {\bf{Z}} \atop q_1(a, b) \neq 0}
\frac{\lambda(q_1(a, b))}{q_1(a, b)^{\frac{1}{2}}} V_{k+1} \left( \frac{m^2 q_1(a, b)}{Y} \right). \end{align*} 
For instance, if we specialize to $Y = N \vert D \vert$ and use the estimates of Lemma \ref{residue} and (i), we then derive
\begin{align*} Y^{\frac{1}{2}} \rho_{\phi \overline{\theta}_{q_1}, 0} \left( \frac{1}{Y} \right) &= 2 \mathfrak{L}^{(k)}(1, f)
+ O_{\pi, \varepsilon} \left( N^{\frac{7}{16} + \varepsilon} \vert D \vert^{-\frac{1}{16} - \varepsilon} \right). \end{align*}
Beware however that the error term in this latter estimate depends on the conductor our given form $\pi$, 
which would limit its applicability to our subsequent arguments when $f$ has nontrivial nebentype 
character\footnote{However, we can and do use this for our estimates over ring class characters. In the setting 
where we average over Dirichlet or cyclotomic characters, we need to assume additionally 
that $\vert D \vert$ is sufficiently large to derive a suitable nonvanishing estimate.}. 
To get around this, we can derive an estimate by hand. To be clear, we wish to show that as a function of a nonzero real variable $y_{\infty}$, 
the coefficient $\rho_{\phi \overline{\theta}_{q_1}, 0}(y_{\infty})$ cannot be identically zero, and moreover is nonvanishing for all $y_{\infty}$ of 
norm $\vert y_{\infty} \vert$ in some region of ${\bf{R}}_{>0}$. We can then use that this nonzero function is smooth and of moderate growth to 
derive its limiting behaviour as desired. In particular, we are not restricted to a specific choice of of $y_{\infty}$ in this approach, so long as we 
derive a nonvanishing estimate in some suitable region. To this end, we start with the Epstein-like expansion $(\ref{doubleIP})$, 
\begin{align*} \rho_{\phi \overline{\theta}_{q_1}, 0} \left( y_{\infty} \right) 
&= \vert y_{\infty} \vert^{\frac{1}{2}}  \frac{1}{w} \sum_{a, b \in {\bf{Z}} \atop q_1(a, b) \neq 0} 
\frac{\lambda(q_1(a, b))}{q_1(a, b)^{\frac{1}{2}}} V_{k+1} \left( q_1(a, b) \vert y_{\infty} \vert \right)  \\ 
&= \vert y_{\infty} \vert^{\frac{1}{2}} \int_{\Re(s) = 2} \widehat{V}_{k+1}(s) \frac{1}{w} 
\sum_{a, b \in {\bf{Z}} \atop q_1(a, b) \neq 0} \frac{\lambda(q_1(a,b))}{q_1(a,b)^{\frac{1}{2}}} \left( q_1(a, b) \vert y_{\infty} \vert \right)^{-s} \frac{ds}{2 \pi i}.\end{align*}
Let us first consider the contribution $\rho_{\phi \overline{\theta}_{q_1}, 0} \left( y_{\infty} \right)\vert_{b = 0}$ from $b =0$ terms in in this expression for the coefficient 
$\rho_{\phi \overline{\theta}_{q_1}, 0} \left( y_{\infty} \right)$, which is seen easily by the residue calculations above to have the integral presentation 
\begin{align*} \rho_{\phi \overline{\theta}_{q_1}, 0} \left( y_{\infty} \right)\vert_{b = 0} 
&= \vert y_{\infty} \vert^{\frac{1}{2}} \int_{\Re(s) = 2} \widehat{V}_{k+1}(s) 
\frac{2}{w} \frac{L(1 + 2s, \operatorname{Sym}^2 f)}{L(2 + 4s, \xi)} \vert y_{\infty} \vert^{-s} \frac{ds}{2 \pi i}. \end{align*} 
Shifting the contour to the right to estimate the behaviour as $\vert y_{\infty} \vert \rightarrow \infty$, and to the left to estimate the 
behaviour as $\vert y_{\infty} \vert \rightarrow 0$, we can derive the familiar estimate 
\begin{align*} \rho_{\phi \overline{\theta}_{q_1}, 0}(y_{\infty})\vert_{b=0} &= \vert y_{\infty} \vert^{\frac{1}{2}} \cdot \begin{cases} 
\mathfrak{L}^{(k)}(1, f) + O_A\left(\vert y_{\infty} \vert^A \right) &\text{ for any $A \geq 1$ as $\vert y_{\infty} \vert \rightarrow 0$} \\
O_C \left( \vert y_{\infty} \vert^{-C} \right) &\text{ for any $C >0$ as $\vert y_{\infty} \vert \rightarrow \infty$}. \end{cases} \end{align*}
To derive similar contour estimates for the remaining contributions of $b \neq 0$ terms 
\begin{align*} \rho_{\phi \overline{\theta}_{q_1},0}(y_{\infty})\vert_{b \neq 0} &= \vert y_{\infty} \vert^{\frac{1}{2}}  \int_{\Re(s) = 2} \widehat{V}_{k+1}(s) 
\frac{1}{w} \sum_{a, b \in {\bf{Z}} \atop q_1(a, b) \neq 0} \frac{\lambda(q_1(a, b))}{q_1(a, b)^{\frac{1}{2}}} \left( q_1(a, b) \vert y_{\infty} \vert \right)^{-s} \frac{ds}{2 \pi i}, \end{align*}
we first observe that shifting contours to the right gives us the unsurprising bound of $O_C\left( \vert y_{\infty} \vert^{\frac{1}{2}} \cdot \vert D y_{\infty} \vert^{-C} \right)$ 
for any choice of constant $C >0$ as $\vert y_{\infty} \vert \rightarrow \infty$. In fact, opening up the contribution of the quadratic form $q_1$ as described in $(\ref{qf})$, 
we can refine this latter estimate to include any $y_{\infty}$ region $\vert D \vert^{-1} < \vert y_{\infty} \vert < 1$. Putting this together with our estimate for the $b=0$ 
contributions in this same region, we then derive the following nonvanishing estimate for the coefficient: 
For any choices of constants $A \geq 1$ and $C > 0$, we have for $y_{\infty}$ in the region $\vert D \vert^{-1} < \vert y_{\infty} \vert < 1$ that 
\begin{align*}  \vert y_{\infty} \vert^{-\frac{1}{2}} \cdot \rho_{\phi \overline{\theta}_{q_1},0}(y_{\infty}) 
&= \mathfrak{L}^{(k)}(1, f) + O_A(\vert y_{\infty} \vert^A) + O_C(\vert D y_{\infty} \vert^{-C}). \end{align*} 
In particular, taking $Y = \vert D \vert^{-1/2}$ (for instance), we deduce that the constant coefficient is nonvanishing for $\vert D \vert \gg 1$. 
In this way, we can deduce from the smoothness and moderate growth of $\rho_{\phi \overline{\theta}_{q_1}}$ that 
\begin{align*} \lim_{Y \rightarrow \infty} Y^{\frac{1}{2}} \rho_{\phi \overline{\theta}_{q_1}, 0} \left( \frac{1}{Y} \right)  = \mathfrak{L}^{(k)}(1, f), \end{align*}
and by Abel summation that 
\begin{align*} \lim_{Y \rightarrow \infty} \sum_{m \geq 1} \frac{\omega \xi(m)}{m} \left( \left( \frac{Y}{m^2} \right)^{\frac{1}{2}}
\rho_{\phi \overline{\theta}_{q_1}, 0} \left( \frac{m^2}{Y} \right)  \right) = L(1, \omega \xi) \cdot \mathfrak{L}^{(k)}(1, f). \end{align*}
It is then easy to deduce that for $Y = N \vert D \vert p^{2 \alpha}$, there exists a constant $\kappa >0$ for which
\begin{align*} \frac{1}{w} \sum_{m \geq 1} \frac{\omega \xi(m)}{m} \sum_{a, b \in {\bf{Z}} \atop q_1(a, b) \neq 0} \frac{\lambda(q_1(a, b))}{q_1(a, b)^{\frac{1}{2}}}
V_{k+1} \left( \frac{m^2 q_1(a, b)}{Y} \right) &= L(1, \omega \xi) \cdot \mathfrak{L}^{(k)}(1, f)
+ O \left( Y^{-\kappa} \right).\end{align*}
\end{proof}

Putting this together with Lemmas \ref{haf2} and \ref{residue}, we obtain the following estimates. 

\begin{theorem}\label{SDave} Let $\alpha \geq 0$ be any integer. We have the following estimates for the averages $H^{(k)}(\alpha, 0)$: \\

\begin{itemize}

\item[(i)] If $k= 0$, then for some constant $\eta_0 >0$ we have  
\begin{align*} H^{(0)}(\alpha, 0) &= \frac{4}{w} \cdot L(1, \omega) \cdot \frac{L^{(p^{\alpha})}(1, \operatorname{Sym}^2 f)}{\zeta^{(p^{\alpha})}(2)} 
+ O \left( (N \vert D \vert p^{2 \alpha})^{-\eta_0} \right). \end{align*}

\item[(ii)] If $k=1$, then for some constant $\eta_1 >0$ we have 
\begin{align*} H^{(1)}(\alpha, 0) &= \frac{4}{w} L(1, \omega) \cdot \frac{L^{(p^{\alpha})}(1, \operatorname{Sym}^2 f )}{\zeta^{(p^{\alpha})}(2)} 
\left[ \frac{1}{2}\log(N\vert D \vert p^{2 \alpha}) + \frac{L'}{L}(1, \omega)  \right. \\  & \left. + \frac{L'^{(p^{\alpha})}}{L^{(p^{\alpha})}}(1, \operatorname{Sym}^2 f) 
- \frac{\zeta'^{(p^{\alpha})}}{\zeta^{(p^{\alpha})}}(2) - \gamma- \log 2 \pi \right] + O((N \vert D \vert p^{2 \alpha})^{-\eta_1} ). \end{align*}

\end{itemize}

In particular, if the quantity $N \vert D \vert p^{2\alpha}$ is sufficiently large, then the average $H^{(k)}(\alpha, 0)$ does not vanish. \end{theorem}

\section{Non self-dual estimates}  

Fix integers $\alpha \geq 0$ and $\beta \geq 4$. We now estimate the averages $H^{(0)}(\alpha, \beta)$ 
for a fixed ring class exponent $\alpha \geq 0$, with varying cyclotomic exponent $\beta \geq 4$. 
Let us retain the setup of Propositions \ref{RT} and \ref{CAF}. We shall derive bounds for the respective sums 
$\widetilde{D}_1(\alpha, \beta; Z)$ (and $\widetilde{D}_1(\rho, \beta; Z)$) and $D_1(\alpha, \beta; Z)$ with unbalancing parameter $Z = p^{ \frac{\beta}{8}}$.
To be more precise, we shall evaluate the hyper-Kloosterman sums in the contragredient term $\widetilde{D}_1(\alpha, \beta; p^{\frac{\beta}{8}})$ 
explicitly to derive an equivalent expression in terms of quadratic Weyl sums, which allows us to use Weyl differencing to obtain a sufficient bound. 
Having reduced the length sufficiently, we can then give a relatively elementary (albeit delicate) argument to estimate the leading term 
$D_1(\alpha, \beta; p^{\frac{\beta}{8}})$. 

\subsection{The twisted sums (Weyl differencing of $p$-adic phase)} 

Fix integers $\alpha \geq 0$ and $\beta \geq 2$. Let us also fix a primitive ring class character $\rho$ of conductor $p^{\alpha}$. 
We now consider the twisted sums
\begin{align*}\widetilde{D}_{1}(\alpha, \beta; p^{\frac{\beta}{8}}) = \frac{\omega(N) \tau(\omega)^4}{(\vert D \vert p^{\beta})^2} \left( \frac{p}{\varphi(p)} \right)
\sum_{m \geq 1 \atop (m,  p)=1} 
&\frac{\omega(m)}{m} \sum_{ n \geq 1 \atop (n, p)=1} \frac{r(n)\lambda(n)}{n^{\frac{1}{2}}} 
V_1 \left(\frac{ m^2 n }{ N \vert D \vert p^{2 \alpha} p^{\frac{17}{8} \beta} } \right) \\
&\times \operatorname{Kl}_4( \pm (m^2 n \overline{N}^2)^{\frac{1}{2}} \overline{D}, p^{\beta}). \end{align*} 
and 
\begin{align*}\widetilde{D}_{1}(\rho, \beta; p^{\frac{\beta}{8}}) 
= \frac{\omega(N) \tau(\omega)^4}{(\vert D \vert p^{\beta})^2} \left( \frac{p}{\varphi(p)} \right) 
\sum_{m \geq 1 \atop (m, p)=1} &\frac{\omega(m)}{m} \sum_{ n \geq 1 \atop (n, p)=1} \frac{c_{\rho}(n)\lambda(n)}{n^{\frac{1}{2}}} 
V_{1} \left(\frac{m^2 n }{N \vert D \vert p^{2\alpha} p^{\frac{17}{8} \beta} } \right) \\
&\times \operatorname{Kl}_4( \pm (m^2 n \overline{N}^2)^{\frac{1}{2}} \overline{D}, p^{\beta}) \end{align*} 
from the formulae of Propositions \ref{haf} and \ref{CAF} respectively. Here again, we use the shorthand notation
\begin{align*} c_{\rho}(n) &= \sum_{A \in \operatorname{Pic}(\mathcal{O}_{p^{\alpha}})} r_A(n) \rho(A) \end{align*}
to denote the corresponding coefficient in the Dirichlet series expansion $(\ref{integralDirichlet})$
of the $L$-function $L(s, f \times \rho \chi \circ {\bf{N}})$ (this being independent of the choice of Dirichlet character $\chi$). 
Note that we have for each $n \geq 1$ the relation\footnote{Note that this can be viewed as a Siegel-Weil type relation, 
linking the sum over ring class theta series $\sum_A \theta_A$ with some Eisenstein series. In this way, the sums 
$D_1(\alpha, \beta; p^{\frac{\beta}{8}}) + \widetilde{D}_1(\alpha, \beta; p^{\frac{\beta}{8}})$ 
can be thought of as averages over primitive Dirichlet characters of certain Eisenstein series,
where we are exploiting the simple nature of the coefficients ($r(n)$) to derive suitable estimates.} 
\begin{align}\label{S-W} r(n) 
&= \frac{1}{\# \operatorname{Pic}(\mathcal{O}_{p^{\alpha}})} 
\sum_{\rho \in \operatorname{Pic}(\mathcal{O}_{p^{\alpha}})^{\vee}} c_{\rho}(n).\end{align}
Indeed, this is easy to see from the definition of $c_{\rho}(n)$, using orthogonality relations: 
\begin{align*} \sum_{\rho \in \operatorname{Pic}(\mathcal{O}_{p^{\alpha}})^{\vee}} c_{\rho}(n) 
&= \sum_{\rho \in \operatorname{Pic}(\mathcal{O}_{p^{\alpha}})^{\vee}} 
\sum_{A \in \operatorname{Pic}(\mathcal{O}_{p^{\alpha}})} r_A(n) \rho(A) 
= \sum_{A \in \operatorname{Pic}(\mathcal{O}_{p^{\alpha}})} r_A(n) 
\sum_{\rho \in \operatorname{Pic}(\mathcal{O}_{p^{\alpha}})^{\vee}} \rho(A) 
= \# \operatorname{Pic}( \mathcal{O}_{p^{\alpha}}) r_{\bf{1}}(n). \end{align*}
Here again, we use that $r_{\bf{1}}(n) = r(n)$ for any integer $n \geq 1$ prime to $p$ 
(see e.g.~\cite[Proposition 7.20]{Cox}).

\begin{proposition}[``Sali\'e"]\label{salie} 

Assume that $\beta \geq 4$, and without loss of generality that $\beta$ is even, say $\beta = 2 b$ for some integer $b \geq 2$. 
Then for any integer $c \geq 1$ prime to $p$, we have the formula
\begin{align*} \operatorname{Kl}_4(c, p^{\beta}) 
&=  p^{\frac{3 \beta}{2}} \sum_{w \operatorname{mod} p^{b} \atop w^4 \equiv c \operatorname{mod} p^{b}} 
e \left( \frac{3 w + c \overline{w}}{p^{\beta}}\right).\end{align*}
Here, the sum runs fourth roots of $c \operatorname{mod} p^b$. \end{proposition}

\begin{proof} The result, attributed to Sali\'e, is considered to be classical. However, the main written reference seems to be 
\cite[Theorem C.1, Lemma C.2]{BB}, where there is a minor error with the formulation of the final statement 
(which needs to be given in terms of liftings of the roots $r^{1/n}$). We therefore indicate a proof of the stated formula for the convenience of the reader. 
Let us lighten notation by writing $x = (x_1, x_2, x_3)$ to denote the $3$-tuple of classes $\operatorname{mod} p^{2 b}$. We then write 
$h = h_c$ denote the function defined on $x$ by $h(x) = x_1 + x_2 + x_3 + c \overline{x_1 x_2 x_2}$, 
and $\nabla h(x) = 1 - c \overline{x}^2 = (1 - c\overline{(x_1 x_2 x_3)x_1}, 1 - c\overline{(x_1 x_2 x_3)x_2}1 - c\overline{(x_1 x_2 x_3)x_2})$. 
It is easy to show (cf.~\cite[Lemme C4]{BB}) that we have the expansion $h(x) = h(y) + p^b \nabla h(y) \boldsymbol{\cdot} z$, 
where $\boldsymbol{\cdot}$ denotes the dot product. 
Substituting this expansion into the definition of the hyper-Kloosterman sum $\operatorname{Kl}_4(c, p^{\beta})$ then gives the relation 
\begin{align*} \operatorname{Kl}_4(c, p^{\beta}) = \sum_{x \operatorname{mod} p^{2b} \atop (x, p^{2b})=1} e \left( \frac{h(x)}{p^{2b}}\right) &=
\sum_{y \operatorname{mod} p^{b} \atop (y, p^{b})=1} \sum_{z \operatorname{mod} p^b} e \left(\frac{h(y) + p^b  \nabla h(y) \boldsymbol{\cdot} z}{p^{2b}} \right) \\
&= \sum_{y \operatorname{mod} p^{2b} \atop (y, p^{2b})=1} e \left(\frac{h(y) }{p^{2b}} \right) \sum_{z \operatorname{mod} p^b } 
e \left( \frac{ \nabla h(y) \boldsymbol{\cdot} z}{p^{b}}\right).\end{align*}
Since the inner sum runs over all ($3$-tuples of) classes $z \operatorname{mod} p^{b}$, 
we can use orthogonality of additive characters to evaluate the inner sum, and hence to obtain the relation
\begin{align*} \operatorname{Kl}_4(c, p^{2b}) 
&= p^{3b} \sum_{ {y \operatorname{mod} p^{2b} \atop (y, p^{2b})=1} \atop \nabla h(y) \equiv 0 \operatorname{mod} p^b} 
e \left(\frac{h(y) }{p^{2b}} \right).\end{align*}
Now, it is easy to see that the solutions to the congruence $\nabla h(y) \equiv 0 \operatorname{mod} p^b$ take the form
of invertible classes $y \operatorname{mod} p^b$ for which $y y_j \equiv c \operatorname{mod} p^b$ for each of $j = 1,2,3$. 
This proves the stated formula. \end{proof}

\begin{corollary} 

If $\beta = 2 b \geq 4$, then we have the following equivalent expressions for our twisted sums:
\begin{align*}\widetilde{D}_{1}(\alpha, \beta; p^{\frac{\beta}{8}}) 
= \frac{\omega(N) \tau(\omega)^4}{\vert D \vert^2 p^{\frac{\beta}{2}}} \frac{p}{\varphi(p)} 
\sum_{m \geq 1 \atop (m, p)=1} &\frac{\omega(m)}{m} \sum_{ n \geq 1 \atop (n, p)=1} \frac{r(n)\lambda(n)}{n^{\frac{1}{2}}} 
V_{1} \left(\frac{m^2 n }{N \vert D \vert p^{2\alpha} p^{\frac{17}{8} \beta} } \right) 
\sum_{w \operatorname{mod} p^{b} \atop w^8 \equiv \pm m^2 n \overline{N}^2 \overline{D}^2 \operatorname{mod} p^{b} } 
e \left( \frac{3 w + w^4 \overline{w}}{p^{\beta}} \right) \end{align*} 
and 
\begin{align*}\widetilde{D}_{1}(\rho, \beta; p^{\frac{\beta}{8}}) 
= \frac{\omega(N) \tau(\omega)^4}{\vert D \vert^2 p^{\frac{\beta}{2}}} \frac{p}{\varphi(p)}
 \sum_{m \geq 1 \atop (m, p)=1} 
&\frac{\omega(m)}{m} \sum_{ n \geq 1 \atop (n, p)=1} \frac{c_{\rho}(n)\lambda(n)}{n^{\frac{1}{2}}} 
V_{1} \left(\frac{m^2 n }{N \vert D \vert p^{2\alpha} p^{\frac{17}{8} \beta}} \right) 
\sum_{w \operatorname{mod} p^{b} \atop w^8 \equiv \pm m^2 n \overline{N}^2 \overline{D}^2 \operatorname{mod} p^{b} } 
e \left( \frac{3 w + w^4 \overline{w}}{p^{\beta}} \right).\end{align*} 

\end{corollary}

\begin{proof} The result follows by direct substitution, squaring the classes $w^4 \operatorname{mod} p^b$ to simplify the congruences. \end{proof}

Using this result, we can now express our twisted sums $\widetilde{D}_1(\alpha, \beta; p^{\frac{\beta}{8}})$ 
and $\widetilde{D}_1(\rho, \beta; p^{\frac{\beta}{8}})$ in terms of Weyl sums.
Here, to simplify expressions, we write $e(\pm x) := e(x) + e(-x) = \exp(2 \pi i  x) + \exp(- 2 \pi i x)$.

\begin{proposition}\label{DtildeWS} 

Suppose that $\beta \geq 4$, and without loss of generality that $\beta$ is even, say $\beta = 2 b$ with $\beta \geq 2$. 
Let $1 \leq s \leq \beta$ be any integer. Given a class $u \bmod p^s$, let $(\frac{u}{p})_8$ denote the octic residue symbol.
Given an integer $m \geq 1$, let $\mu_m$ denote the class $\mu_m \equiv m^2 \overline{N}^2 \overline{D}^2 \bmod p^s$.
Let $F_{u, m, s}(t)$ denote the polynomial 
\begin{align*} F_{u, m, s}(t) &= \frac{\xi_{u, m}}{ p^{\beta} } \left( 
\sum_{j = 0}^{\lfloor \beta / s \rfloor}  { { \frac{1}{8}}\choose{j}} \left[ 3 (\overline{u} \mu_m)^j    
+ \xi_{u,m}^3 \overline{\xi}_{u, m} { { \frac{1}{2}}\choose{j}} \right]p^{s j} t^j  \right). \end{align*} 
Here, 
\begin{align*} { \frac{1}{n}}\choose{j} &= \frac{\frac{1}{n} (\frac{1}{n} - 1) \cdots (\frac{1}{n} - j +1)}{j!}\end{align*} 
for each integer $n \geq 2$, $\xi_{u, m}$ is some fixed eight root of  $(m \overline{N} \overline{D}) u \overline{\mu}_m \bmod p^{\beta}$,
and $\lfloor \beta / s \rfloor$ denotes the nearest integer $\leq \beta/s$.
We have the following equivalent descriptions for the sums $\widetilde{D}_1(\alpha, \beta; p^{\frac{\beta}{8}})$ 
and $\widetilde{D}_1(\rho, \beta; p^{\frac{\beta}{8}})$: 
\begin{align*} &\widetilde{D}_1(\alpha, \beta; p^{\frac{\beta}{8}}) \\ 
&= \frac{\omega(N) \tau(\omega)^4}{\vert D \vert^2 p^{\frac{\beta}{2}}} \frac{p}{\varphi(p)} 
\sum_{1 \leq u \leq p^s \atop  (\frac{u}{p})_8=1} \sum_{m \geq 1} \frac{\omega(m)}{m} \sum_{t \geq 0} 
\frac{\lambda(u \overline{\mu}_m + p^s t) r(u \overline{\mu}_m + p^s t)}{ (u \overline{\mu}_m + p^s t)^{\frac{1}{2}}} 
V_1 \left( \frac{m^2 (u \overline{\mu}_m + p^s t)}{N \vert D \vert p^{2\alpha} p^{\frac{17}{8} \beta}}\right) 
e \left( \pm F_{u, m, s}(t) \right)  \end{align*} 
and 
\begin{align*} &\widetilde{D}_1(\rho, \beta; p^{\frac{\beta}{8}}) \\ 
&=\frac{\omega(N) \tau(\omega)^4}{\vert D \vert^2 p^{\frac{\beta}{2}}} \frac{p}{\varphi(p)} 
\sum_{1 \leq u \leq p^s \atop  (\frac{u}{p})_8=1} \sum_{m \geq 1} \frac{\omega(m)}{m} \sum_{t \geq 0} 
\frac{\lambda(u \overline{\mu}_m + p^s t) c_{\rho}(u \overline{\mu}_m + p^s t)}{ (u \overline{\mu}_m + p^s t)^{\frac{1}{2}}} 
V_1 \left( \frac{m^2 (u \overline{\mu}_m + p^s t)}{N \vert D \vert p^{2 \alpha} p^{\frac{17}{8} \beta}  }\right) e \left( \pm F_{u, m, s}(t) \right).
\end{align*} \end{proposition}

\begin{proof} 

Fix an integer $1 \leq s \leq \beta$. We then divide $mn$-sums of into congruence classes modulo $p^s$ to obtain 
\begin{align*} &\widetilde{D}_1(\alpha, \beta; p^{\frac{\beta}{8}}) \\
&= \frac{\omega(N) \tau(\omega)^4}{\vert D \vert^2 p^{\frac{\beta}{2}}} \frac{p}{\varphi(p)} \sum_{1 \leq u \leq p^s \atop  (\frac{u}{p})_8=1}
&\sum_{m, n \geq 1 \atop u \equiv m^2 n \overline{N}^2 \overline{D}^2 \bmod p^s} 
\frac{\omega(m) \lambda(n) r(n)}{ m n^{\frac{1}{2}}} V_1 \left( \frac{m^2 n}{N \vert D \vert p^{2 \alpha} p^{\frac{17}{8} \beta}}\right) 
\sum_{ w \bmod p^b \atop w^8 \equiv \pm u \bmod p^b } e\left( \frac{ 3 w + w^4 \overline{w}}{p^{\beta}}\right) \end{align*} 
and 
\begin{align*} &\widetilde{D}_1(\rho, \beta; p^{\frac{\beta}{8}}) \\
&= \frac{\omega(N) \tau(\omega)^4}{\vert D \vert^2 p^{\frac{\beta}{2}}} \frac{p}{\varphi(p)}
\sum_{1 \leq u \leq p^s \atop  (\frac{u}{p})_8=1}&\sum_{m, n \geq 1 \atop u \equiv m^2 n \overline{N}^2 \overline{D}^2 \bmod p^s} 
\frac{\omega(m) \lambda(n) c_{\rho}(n)}{ m n^{\frac{1}{2}}} V_1 \left( \frac{m^2 n}{N \vert D \vert p^{2\alpha} p^{\frac{17}{8} \beta}}\right) 
\sum_{ w \bmod p^b \atop w^8 \equiv \pm u \bmod p^b } e\left( \frac{ 3 w + w^4 \overline{w}}{p^{\beta}}\right), \end{align*}
where the condition $(\frac{u}{p})_8 = 1$ in the $u$-sum comes from the $w$-sum via Hensel's lemma. Note that here,
each of the $w$-sums consists one a single pair of eighth roots $w \operatorname{mod} p^b$. 

To give a more explicit description of the eighth roots $w^8 \equiv \pm u \bmod p^b$ appearing in these expressions, 
fix a class $u \bmod p^{s}$ for which $(\frac{u}{p})_8 = 1$, and consider the corresponding inner $mn$-sum. 
Let $\mu_m \equiv m^2 \overline{N}^2 \overline{D}^2 \bmod p^s$. Hence $n \mu_m \equiv m^2 \overline{N}^2 \overline{D}^2 n \bmod p^{s}$, 
from which it follows that $n \mu_m \equiv u \bmod p^{s}$. Hence, we can expand each integer $n \geq 1$ in the second  
sum in terms of the congruence condition $u \equiv n \mu_m \bmod p^s$ as $n = u \overline{\mu}_m + p^s t$, 
with $t \geq 0$ varying over positive integers. This in turn gives us the expansion
\begin{align*} w^8 &\equiv \pm m^2 \overline{N}^2 \overline{D}^2(u \overline{\mu}_m + p^s t) 
\equiv \pm (m \overline{N}\overline{D})^2 (u \overline{\mu}_m + p^s t) 
\equiv \pm (m \overline{N}\overline{D})^2 u \overline{\mu}_m(1 + \kappa p^s t) \bmod p^{\beta}, \end{align*} 
where $\kappa = \overline{u} \mu_m$ denotes the multiplicative inverse of $u \overline{\mu}_m \bmod p^{\beta}$. 
Now, Hensel's lemma ensures that we can find an integer $\xi = \xi_{u, m}$ such that 
$\xi^8 \equiv (m \overline{N} \overline{D}) u \overline{\mu}_m \bmod p^{\beta}$. 
(In fact, there exist $O_p(1)$ many such roots, and we choose one implicitly). 
Hence, we can express the roots appearing in the corresponding $w$-sum as 
\begin{align}\label{eighth}w^8 \equiv \pm \xi^8 (1 + \kappa p^s t) \bmod p^{\beta}, \end{align}
so that $w \equiv \xi_u(1 + \kappa p^s t)^{\frac{1}{8}}$ and $w^4 \equiv \xi_u^4 (1 + \kappa p^s t)^{\frac{1}{2}}$.
Now, to give an even more explicit description of these classes $w$ and $w^4$, 
we can use the classical fact that for any $x \in p{\bf{Z}}_p$, the power series 
\begin{align*} \sum_{j \geq 0}  { { \frac{1}{n}}\choose{j}} x^j  ~~~~~ \in {\bf{Z}}_p [[x]]\end{align*} 
converges in the $p$-adic norm to the $n$-th root $(1 + x)^{\frac{1}{n}}$ for any integer $n \geq 2$ (see e.g.~\cite[p. 173]{Rob}) to obtain
\begin{align*} (1 + \kappa p^s t)^{\frac{1}{n}} &= \sum_{j \geq 0}  { { \frac{1}{n}}\choose{j}} \kappa^j p^{s j} t^j, \end{align*} 
so that 
\begin{align*} w \equiv \xi_u \sum_{j \geq 0}  { { \frac{1}{8}}\choose{j}} \kappa^j p^{s j} t^j  \bmod  p^{\beta} \end{align*}
and 
\begin{align*} w^4 \equiv \xi_u^4 \sum_{j \geq 0}  { { \frac{1}{2}}\choose{j}} \kappa^j p^{s j} t^j \bmod p^{\beta}. \end{align*}
Hence, we derive the more explicit presentation 
\begin{align*} \frac{3 w + w^4 \overline{w}}{p^{\beta}} &= 
\frac{ 3 \xi (1 + \kappa p^s t)^{\frac{1}{8}} + \xi^4 (1 + \overline{\kappa} p^s t)^{\frac{1}{2}} \overline{\xi} (1 + \kappa p^s t)^{\frac{1}{8}}}{p^{\beta}} \\
&= \frac{\xi}{p^{\beta}} \left(  3 (1 + \kappa p^s t)^{\frac{1}{8}} + \xi^3 \overline{\xi}(1 + \overline{\kappa} p^s t)^{\frac{1}{2}}  (1 + \kappa p^s t)^{\frac{1}{8}} \right) \\
&= \frac{\xi_u}{ p^{\beta} } \left( 3 \sum_{j = 0}^{\lfloor \beta / s \rfloor}  { { \frac{1}{8}}\choose{j}} \kappa^j p^{s j} t^j    
 + \xi^3 \overline{\xi} \sum_{j = 0}^{\lfloor \beta / s \rfloor}  { { \frac{1}{8}}\choose{j}} { { \frac{1}{2}}\choose{j}} p^{ 2s j} t^{2j}   \right), \end{align*} 
which simplifies to give the stated expressions for the exponential sums described at the start of the proof. \end{proof}

Using this description of the twisted sums, we may now use Weyl differencing to derive bounds as follows. 

\begin{theorem}\label{D1tilde} 

Assume that $\beta \geq 4$. We have for any choice of $\varepsilon >0$ (in either case) the upper bounds 
\begin{align*} \widetilde{D}_1(\alpha, \beta; p^{\frac{\beta}{8}}) 
&= O_{p, f, D, \alpha, \varepsilon} \left( p^{-\beta (\frac{1}{4} - \varepsilon)} \right) \end{align*}
and 
\begin{align*} \widetilde{D}_1(\rho, \beta; p^{\frac{\beta}{8}}) 
&= O_{p, f, D, \rho, \varepsilon} \left( p^{-\beta (\frac{1}{4} - \varepsilon)} \right). \end{align*}
\end{theorem}

\begin{proof} Taking $s = b = \frac{\beta}{2}$ in Proposition \ref{DtildeWS} above, we have the identities
\begin{align}\label{exq1} \widetilde{D}_1(\alpha, \beta; p^{\frac{\beta}{8}}) = 
&\frac{\omega(N) \tau(\omega)^4}{\vert D \vert^2 p^{b}} \frac{p}{\varphi(p)}
\sum_{1 \leq u \leq p^b \atop  (\frac{u}{p})_8=1} \sum_{m \geq 1} \frac{\omega(m)}{m} \sum_{t \geq 0} 
\frac{\lambda(u \overline{\mu}_m + p^b t) r(u \overline{\mu}_m + p^b t)}{ (u \overline{\mu}_m + p^b t)^{\frac{1}{2}}} 
V_1 \left( \frac{m^2 (u \overline{\mu}_m + p^b t)}{N \vert D \vert p^{2 \alpha} p^{\frac{17}{8} \beta}}\right) e \left( \pm F_{u, m, b}(t) \right)  \end{align} 
and 
\begin{align}\label{exq2} \widetilde{D}_1(\rho, \beta; p^{\frac{\beta}{8}}) =  
&\frac{\omega(N) \tau(\omega)^4}{\vert D \vert^2 p^{b}} \frac{p}{\varphi(p)}
\sum_{1 \leq u \leq p^b \atop  (\frac{u}{p})_8=1} \sum_{m \geq 1} \frac{\omega(m)}{m} \sum_{t \geq 0} 
\frac{\lambda(u \overline{\mu}_m + p^b t) c_{\rho}(u \overline{\mu}_m + p^b t)}{ (u \overline{\mu}_m + p^b t)^{\frac{1}{2}}} 
V_1 \left( \frac{m^2 (u \overline{\mu}_m + p^b t)}{N \vert D \vert p^{2 \alpha} p^{\frac{17}{8} \beta}} \right) e \left( \pm F_{u, m, b}(t) \right), \end{align}
where each of the quadratic polynomials 
\begin{align*} F_{u, m, b}(t) &= \frac{\xi_{u, m}}{ p^{\beta} } \left( \sum_{j = 0}^{2}  { { \frac{1}{8}}\choose{j}} \left[ 3 (\overline{u} \mu_m)^j   
+ \xi_{u,m}^3 \overline{\xi}_{u, m} { { \frac{1}{2}}\choose{j}} \right]p^{s j} t^j  \right). \end{align*}
has leading coefficient given by 
\begin{align*} \theta = \theta_{u,m} &= \xi_{u, m} { { \frac{1}{8}}\choose{2}} \left[ 3 (\overline{u} \mu_m)^2   
+ \xi_{u,m}^3 \overline{\xi}_{u, m} { { \frac{1}{2}}\choose{2}} \right] \\ 
&= -\frac{7}{128} \xi_{u, m} \left[ 3 (\overline{u} \mu_m)^2  + \xi_{u,m}^3 \overline{\xi}_{u, m} \frac{1}{8} \right] = 
-\frac{7}{1024} \left[ 3 \cdot 8 (\overline{u} \mu_m)^2  + \xi_{u,m}^3 \overline{\xi}_{u, m} \right] \in {\bf{Q}} - \lbrace 0 \rbrace. \end{align*}

Let $T \geq 1$ be any integer, and $F(t)$ any quadratic polynomial of the form 
$F(t) = \theta t^2 + c_1 t + c_0$ with $\theta \in {\bf{R}}$. Dirichlet's approximation theorem implies
that there exist (infinitely many) pairs of coprime integers $a$ and $q$ such that 
\begin{align}\label{DAT} \left\vert 2\theta - \frac{a}{q} \right\vert &\leq \frac{1}{2 T q} \end{align}
with $1 \leq q \leq 2T$. If $\theta$ is sufficiently well-approximable to a rational number in this sense, 
then a classical argument due to Weyl can be used to estimate the exponential sum
\begin{align*} S_F(T) &:= \sum_{1 \leq t \leq T} e(F(t)) \end{align*} as 
\begin{align} \vert S_F(T) \vert &\leq 2 T q^{-\frac{1}{2}} + q^{\frac{1}{2}} \log q \end{align}
for any choice of $1 \leq q \leq 2T$ for which the inequality $(\ref{DAT})$ holds. 
We refer to \cite[Proposition 8.1]{IK} for more details of this argument. Here,
since our leading coefficient $\theta = \theta_{u, m}$ is rational, the inequality $(\ref{DAT})$ 
is satisfied trivially for any choice of $1 \leq q \leq 2T$ (e.g.~by taking $a = 2 \theta q$). 
Taking $q = 2T$ then gives us 
\begin{align}\label{Weylapp} \left\vert  S_{F_{u, m, b}}(T) \right\vert 
&\ll_{\varepsilon} T^{\frac{1}{2} + \varepsilon} \end{align}
for any choice of $\varepsilon >0$, uniformly in the choice of class $u \operatorname{mod} p^b$ 
or integer $m \geq 1$ in the sums $(\ref{exq1})$ and $(\ref{exq2})$.

Let us now return to the sums $(\ref{exq1})$ and $(\ref{exq2})$, starting with the expression $(\ref{exq1})$ for $\widetilde{D}_1(\alpha, \beta; p^{\frac{\beta}{8}})$.
Fixing a residue class $u \operatorname{mod} p^b$ (of which there are $O_p(p^b)$ many), it will do to bound the corresponding inner $t$-sum. 
Moreover, we argue using partial summation that it will do to consider the contribution from $m=1$, and by symmetry that it will do to consider the 
contribution of the exponential terms $e(F_{u, m, b}(t))$ (say) in the expansions $e(\pm F_{u, m, b}(t)) = e(F_{u, m, b}(t)) + e(- F_{u, m, b}(t))$. 
Hence, it remains to bound the contribution of 
\begin{align}\label{exq1r} \sum_{t \geq 0} \frac{\lambda(u \overline{\mu}_1 + p^b t) r(u \overline{\mu}_1 + p^b t)}{ (u \overline{\mu}_1 + p^b t)^{\frac{1}{2}}} 
V_1 \left( \frac{ u \overline{\mu}_1 + p^b t }{N \vert D \vert p^{2 \alpha } p^{\frac{17}{8} \beta}}\right) e \left(F_{u, 1, b}(t) \right). \end{align}
Using the rapid decay of the cutoff function $V_1$, we argue that it will suffice to consider the truncated sum of length 
$T := N \vert D \vert p^{2 \alpha + \frac{17}{8} \beta - b}$ corresponding to the region of moderate decay for $V_1$: 
\begin{align}\label{exq1rT} \sum_{0 \leq t \leq T} \frac{\lambda(u \overline{\mu}_1 + p^b t) r(u \overline{\mu}_1 + p^b t)}{ (u \overline{\mu}_1 + p^b t)^{\frac{1}{2}}} 
V_1 \left( \frac{u \overline{\mu}_1 + p^b t }{N \vert D \vert p^{2 \alpha} p^{\frac{17}{8} \beta}}\right) e \left(F_{u, 1, b}(t) \right), \end{align}
i.e.~as $t \leq T$ implies that $u \overline{\mu}_1 + p^b t \leq N \vert D \vert p^{2 \alpha + \frac{17}{8} \beta}$ 
(with $V_1(u \overline{\mu}_1 + p^b t) = O_C( (u \overline{\mu}_1 + p^b t)^{-C})$ when $t > T$).
To estimate this latter sum $(\ref{exq1rT})$ via the bound $(\ref{Weylapp})$, let us first apply a discrete version of partial summation
(cf.~Lemma \ref{ps} above). Hence, given an integer $t \geq 0$, let us write 
\begin{align*} h(t) &= \frac{\lambda(u \overline{\mu}_1 + p^b t) r(u \overline{\mu}_1 + p^b t)}{ (u \overline{\mu}_1 + p^b t)^{\frac{1}{2}}} 
V_1 \left( \frac{ u \overline{\mu}_1 + p^b t }{N \vert D \vert p^{2 \alpha} p^{\frac{17}{8} \beta} }\right).\end{align*}
It is then easy to see that the sum $(\ref{exq1rT})$ is given by the difference 
\begin{align*}S_{F_{u, 1, b}}(T) h(T) - \sum_{1 \leq t \leq T-1} S_{F_{u, 1, b}}(t) \left[ h(t) - h(t-1) \right], \end{align*}
and that it will suffice to bound the modulus of the leading term $S_{F_{u, 1, b}}(T) h(T)$. 
Using the Deligne/classical bounds $\lambda(n), r(n) \ll_{\varepsilon} n^{\varepsilon}$ together with the decay bound
$V_1(y) = O(y^{\frac{1}{2}})$ for $1 \leq y \leq 1$ and the Weyl bound $(\ref{Weylapp})$, we then derive the bound
\begin{align*} S_{F_{u, 1, b}}(T) h(T) 
&\ll_{\varepsilon} \left\vert S_{F_{u, 1, b}}(T) \right\vert \cdot \frac{(p^b T)^{\varepsilon}}{(p^b T)^{ \frac{1}{2} }} 
\frac{ (p^b T)^{\frac{1}{2}}}{ (N \vert D \vert p^{2 \alpha} p^{\frac{17}{8} \beta})^{\frac{1}{2}} } \\
&\ll_{\epsilon, \varepsilon'}  (N \vert D \vert p^{2\alpha} p^{\frac{17}{8} \beta})^{-\frac{1}{2}} (p^b T)^{\varepsilon} T^{\frac{1}{2} + \varepsilon'} 
=(N \vert D \vert p^{2 \alpha} p^{\frac{17}{8} \beta})^{-\frac{1}{2} + \varepsilon + \varepsilon'} T^{\frac{1}{2} + \varepsilon'}  \\
&\ll_{\varepsilon} (N \vert D \vert p^{2\alpha} p^{\frac{17}{8} \beta})^{\varepsilon}  (p^{- b})^{\frac{1}{2} + \varepsilon},  \end{align*}
which (by our various reductions above) also bounds the modulus of the sums $(\ref{exq1rT})$ and $(\ref{exq1r})$.
Taking the sum over residues $u \bmod p^{b}$ (of which there are $O_p(p^b)$ many), 
and taking into account the scaling factor $p^{-b}$ at the front of the sum, 
we then derive the claimed upped bound $\widetilde{D}_1(\alpha, \beta; p^{\frac{\beta}{8}}) 
= O_{p, f, D, \alpha, \varepsilon} \left(  p^{-\beta (\frac{1}{4} - \varepsilon)} \right) $.
A completely analogous argument (replacing $r$ by $c_{\rho}$)\footnote{Using that $c_{\rho}$ is the Fourier coefficient of the theta series $\theta(\rho)$}
works to bound the sum $\widetilde{D}_1(\rho, \beta; p^{\frac{\beta}{8}})$ as stated. \end{proof}

\begin{remark} It should be possible to derive sharper bounds for these sums (in the style of Khan \cite{Kh} say) 
by taking Weyl polynomials of higher degree. However, we have not pursued the idea in this work. \end{remark}

\subsection{The leading sums} 

Fix an integer $\beta \geq 2$. Fix an integer $\alpha \geq 1$, as well as a primitive ring class character 
$\rho$ of conductor $p^{\alpha}$. Let us now consider the leading sums 
\begin{align*} D_{1}(\alpha, \beta; p^{\frac{\beta}{8}}) = \sum_{m \geq 1} \frac{\omega(m)}{m} 
&\sum_{ {n \geq 1 \atop (n, p) =1} \atop m^2 n \equiv 1 \bmod p^{\beta}} \frac{r(n)\lambda(n)}{n^{\frac{1}{2}}} 
V_{1}\left(\frac{m^2 n }{ N \vert D \vert p^{2\alpha} p^{\frac{15}{8} \beta} } \right) \\ 
& - \frac{1}{\varphi(p)} \sum_{m \geq 1} \frac{\omega(m)}{m} 
\sum_{ {n \geq 1 \atop m^2 n \equiv 1 \bmod p^{\beta-1}} \atop m^2 n \not \equiv 1 \bmod p^{\beta}} \frac{r(n)\lambda(n)}{n^{\frac{1}{2}}} 
V_{1}\left(\frac{m^2 n }{ N \vert D \vert p^{2 \alpha} p^{\frac{15}{8} \beta} } \right) \end{align*} 
and 
\begin{align*} D_1(\rho, \beta; p^{\frac{\beta}{8}}) = \sum_{m \geq 1} \frac{\omega(m)}{m} 
&\sum_{ {n \geq 1 \atop (n, p) =1} \atop m^2 n \equiv 1 \bmod p^{\beta}} \frac{c_{\rho}(n)\lambda(n)}{n^{\frac{1}{2}}} 
V_{1}\left(\frac{m^2 n }{ N \vert D \vert p^{2 \alpha} p^{\frac{15}{8} \beta} } \right) \\ 
& - \frac{1}{\varphi(p)} \sum_{m \geq 1} \frac{\omega(m)}{m} 
\sum_{ {n \geq 1 \atop m^2 n \equiv 1 \bmod p^{\beta-1}} \atop m^2 n \not \equiv 1 \bmod p^{\beta}} \frac{c_{\rho}(n)\lambda(n)}{n^{\frac{1}{2}}} 
V_{1}\left(\frac{m^2 n }{ N \vert D \vert p^{2 \alpha}p^{\frac{15}{8} \beta} } \right).\end{align*}
We now estimate these sums. Let us lighten notation by writing 
\begin{align*} \mathfrak{D} = q_1(0, 1) = 
&= \begin{cases} \frac{\vert D \vert}{4} &\text{ if $D \equiv 0 \bmod 4$} \\ \frac{1+\vert D \vert}{4} &\text{ if $D \equiv 1 \bmod 4$}. \end{cases} \end{align*}

\begin{proposition}\label{D1} 

We have for the setup described above the estimate 
\begin{align*} D_1(\alpha, \beta; p^{\frac{\beta}{8}}) &= \frac{2}{w}\left( 1 + \frac{\lambda(\mathfrak{D})}{\mathfrak{D}^{\frac{1}{2}}} \right) 
\cdot \frac{2}{\varphi^\star(p^{\beta})} \sum_{\chi \bmod p^{\beta} \atop \chi(-1)=1, \operatorname{primitive}} 
L(1, \omega \chi^2) \cdot \frac{L(1, \operatorname{Sym^2} f \otimes \chi)}{L(2, \chi^2)} \\
&+ O_{\varepsilon} \left( (N \vert D \vert p^{2 \alpha})^{\frac{1}{2} + \varepsilon} (p^{\beta})^{- \frac{1}{16} + 30 \varepsilon} \right). \end{align*} 

\end{proposition}

\begin{proof} 

Recall that after unwinding, the sum $D_1(\alpha, \beta; p^{\frac{\beta}{8}})$ can be expressed equivalently as 
\begin{align*} D_1(\alpha, \beta; p^{\frac{\beta}{8}}) &= \frac{2}{\varphi^{\star}(p^{\beta})} 
\sum_{\chi \bmod p^{\beta} \atop \chi(-1) = 1, \operatorname{primitive}} 
\sum_{m \geq 1 \atop (m, p)=1} \frac{\omega \chi^2 (m)}{m} \sum_{n \geq 1 \atop (n, p)=1} 
\frac{\lambda(n) \chi(n) r(n)}{n^{\frac{1}{2}}} V_1\left( \frac{m^2 n}{N \vert D \vert p^{2 \alpha} p^{\frac{15}{8} \beta}} \right), \end{align*}
which after expanding in terms of the parametrization $(\ref{count})$ of the counting function $r(n)$ gives the expression 
\begin{align}\label{expression} D_1(\alpha, \beta; p^{\frac{\beta}{8}}) &= \frac{1}{w} \cdot \frac{2}{\varphi^{\star}(p^{\beta})} 
\sum_{\chi \bmod p^{\beta} \atop \chi(-1)= 1\operatorname{primitive}} 
\sum_{m \geq 1} \frac{\omega \chi^2 (m)}{m} \sum_{a, b \in {\bf{Z}} \atop q_1(a, b) \neq 0} 
\frac{\lambda(q_1(a, b)) \chi(q_1(a, b)) }{q_1(a, b)^{\frac{1}{2}}} 
V_1\left( \frac{m^2 q_1(a, b)}{N \vert D \vert p^{2 \alpha} p^{\frac{15}{8} \beta}} \right).\end{align}

Let us first consider the contributions from the $b=0$ terms in $(\ref{expression})$. Opening up contours, we see that
\begin{align*} &D_1(\alpha, \beta; p^{\frac{\beta}{8}})\vert_{b=0} \\ &= \frac{1}{w} \cdot \frac{2}{\varphi^{\star}(p^{\beta})} 
\sum_{\chi \bmod p^{\beta} \atop \chi(-1)= 1\operatorname{primitive}} \sum_{m \geq 1} \frac{\omega \chi^2 (m)}{m} \sum_{a \neq 0 \in {\bf{Z}}} 
\frac{\lambda(q_1(a, 0)) \chi(q_1(a, 0)) }{q_1(a, 0)^{\frac{1}{2}}} V_1\left( \frac{m^2 q_1(a, 0)}{N \vert D \vert p^{2 \alpha} p^{\frac{15}{8} \beta}} \right) \\ 
&= \frac{2}{w} \cdot \frac{2}{\varphi^{\star}(p^{\beta})} \sum_{\chi \bmod p^{\beta} \atop \chi(-1)= 1\operatorname{primitive}} 
\int_{\Re(s)=2} \frac{L_{\infty}(s + 1/2)}{L_{\infty}(1/2)} \frac{g^*(s)}{s} L(2s, \omega \chi^2) \frac{L(s + 1, \operatorname{Sym}^2 f \otimes \chi)}{L(2(s + 1), \chi^2)} 
(N \vert D \vert p^{2 \alpha} p^{\frac{15}{8} \beta})^s \frac{ds}{s}. \end{align*}
Shifting the contour in the latter integral to to $\Re(s)=-2$, we cross a simple pole at $s=0$ of residue
\begin{align*} \frac{2}{w} \cdot \frac{2}{\varphi^{\star}(p^{\beta})} \sum_{\chi \bmod p^{\beta} \atop \chi(-1)= 1\operatorname{primitive}}
L(1, \omega \chi^2) \frac{L(1, \operatorname{Sym}^2 f \otimes \chi)}{L(2, \chi^2)}. \end{align*}
Shifting the remaining contour leftward, we then see that the remaining contribution is bounded above by 
$O_C \left( (N \vert D \vert p^{2 \alpha} p^{\frac{15}{8} \beta})^{-C} \right)$ for any choice of constant $C >0$.
A completely analogous calculation for the $a = 0$ contributions gives the estimate 
\begin{align*} &D_1(\alpha, \beta; p^{\frac{\beta}{8}})\vert_{a=0} \\ 
&= \frac{1}{w} \cdot \frac{2}{\varphi^{\star}(p^{\beta})} \sum_{\chi \bmod p^{\beta} \atop \chi(-1)= 1\operatorname{primitive}} 
\sum_{m \geq 1} \frac{\omega \chi^2 (m)}{m} \sum_{b \neq 0 \in {\bf{Z}}} 
\frac{\lambda(q_1(0, b)) \chi(q_1(0, b)) }{q_1(0, b)^{\frac{1}{2}}} V_1\left( \frac{m^2 q_1(0, b)}{N \vert D \vert p^{2 \alpha} p^{\frac{15}{8} \beta}} \right) \\ 
&= \frac{2}{w} \left( \frac{\lambda(\mathfrak{D})}{\mathfrak{D}^{\frac{1}{2}}} \right) 
\cdot \frac{2}{\varphi^{\star}(p^{\beta})} \sum_{\chi \bmod p^{\beta} \atop \chi(-1)= 1\operatorname{primitive}} 
\int_{\Re(s)=2} \frac{L_{\infty}(s + 1/2)}{L_{\infty}(1/2)} \frac{g^*(s)}{s} L(2s, \omega \chi^2) 
\frac{L(s + 1, \operatorname{Sym}^2 f \otimes \chi)}{L(2(s + 1), \chi^2)} (N \vert D \vert p^{2 \alpha} p^{\frac{15}{8} \beta})^s \frac{ds}{s} \\ 
&= \frac{2}{w} \left( \frac{\lambda(\mathfrak{D})}{\mathfrak{D}^{\frac{1}{2}}} \right)
\cdot \frac{2}{\varphi^{\star}(p^{\beta})} \sum_{\chi \bmod p^{\beta} \atop \chi(-1)= 1\operatorname{primitive}} 
L(1, \omega \chi^2) \frac{L(1, \operatorname{Sym}^2 f \otimes \chi)}{L(2, \chi^2)} 
+ O_C \left( (N \vert D \vert p^{2 \alpha} p^{\frac{15}{8} \beta})^{-C} \right) \end{align*}
for any choice of constant $C>0$. Putting these contributions together, we then derive the residual estimate 
\begin{align*} &D_1(\alpha, \beta; p^{\frac{\beta}{8}})\vert_{b=0} + D_1(\alpha, \beta; p^{\frac{\beta}{8}})\vert_{a=0} \\ 
&= \frac{2}{w} \left( 1 + \frac{\lambda(\mathfrak{D})}{\mathfrak{D}^{\frac{1}{2}}} \right)
\cdot \frac{2}{\varphi^{\star}(p^{\beta})} \sum_{\chi \bmod p^{\beta} \atop \chi(-1)= 1\operatorname{primitive}} 
L(1, \omega \chi^2) \frac{L(1, \operatorname{Sym}^2 f \otimes \chi)}{L(2, \chi^2)} 
+ O_C \left( (N \vert D \vert p^{2 \alpha} p^{\frac{15}{8} \beta})^{-C} \right) \end{align*}
for any choice of constant $C>0$. Let us now estimate the remaining contribution
\begin{align*} D_1^{\star}(\alpha, \beta; p^{\frac{\beta}{8}}) &= \frac{1}{w} \cdot \frac{2}{\varphi^{\star}(p^{\beta})} 
\sum_{\chi \bmod p^{\beta} \atop \chi(-1)= 1\operatorname{primitive}} 
\sum_{m \geq 1} \frac{\omega \chi^2 (m)}{m} \sum_{a, b \in {\bf{Z}} \atop ab \neq 0} 
\frac{\lambda(q_1(a, b)) \chi(q_1(a, b)) }{q_1(a, b)^{\frac{1}{2}}} 
V_1\left( \frac{m^2 q_1(a, b)}{N \vert D \vert p^{2 \alpha} p^{\frac{15}{8} \beta}} \right),\end{align*}
which after evaluating via the orthogonality relation $(\ref{QO})$ is the same as 
\begin{align*} D^{\star}_1(\alpha, \beta; p^{\frac{\beta}{8}}) &= \frac{1}{w} \sum_{m \geq 1} \frac{\omega(m)}{m}
\sum_{ a, b \in {\bf{Z}}, ab \neq 0  \atop m^2 q_1(a, b) \equiv \pm 1 \bmod p^{\beta}   } \frac{ \lambda(q_1(a,b))}{q_1(a, b)^{\frac{1}{2}}} 
V_1 \left( \frac{m^2 q_1(a, b)}{N \vert D \vert p^{2 \alpha} p^{ \frac{15}{8} \beta }} \right) \\ 
&- \frac{1}{\varphi(p)} \frac{1}{w} \sum_{m \geq 1} \frac{\omega(m)}{m}
\sum_{ { a, b \in {\bf{Z}}, ab \neq 0 \atop m^2 q_1(a, b) \equiv \pm 1 \bmod p^{\beta-1}}Ê\atop m^2 q_1(a, b) \not\equiv \pm 1 \bmod p^{\beta} }
\frac{\lambda(q_1(a, b))}{q_1(a, b)^{\frac{1}{2}}} V_1 \left( \frac{m^2 q_1(a, b)}{N \vert D \vert p^{2 \alpha} p^{ \frac{15}{8} \beta }} \right). \end{align*}
Expanding out in terms of congruences and writing
\begin{align*} r^{\star}(n) &= \frac{1}{w} \cdot \# \left\lbrace (a, b) \in {\bf{Z}}^2, ab \neq 0, q_1(a, b) = n  \right\rbrace \end{align*}
to denote the corresponding counting function, we see that this is the same as 
\begin{align*} D^{\star}_1(\alpha, \beta; p^{\frac{\beta}{8}}) &= \frac{1}{w} \sum_{m \geq 1} \frac{\omega(m)}{m} \sum_{t \geq 1} 
\frac{\lambda( \overline{m}^2 + t p^{\beta}) r^{\star}(\overline{m}^2 + tp^{\beta})}{(\overline{m}^2 + t p^{\beta})} 
V_1 \left( \frac{m^2 (\overline{m}^2 + tp^{\beta})}{N \vert D \vert p^{2 \alpha} p^{ \frac{15}{8} \beta}} \right) \\
&- \frac{1}{\varphi(p)} \frac{1}{w} \sum_{m \geq 1} \frac{\omega(m)}{m} 
\sum_{l=1}^{p-1} \frac{\lambda(\overline{m}^2 + l p^{\beta-1}) r^{\star}( \overline{m}^2 + lp^{\beta-1})}{(\overline{m}^2 + l p^{\beta-1})}
V_1 \left( \frac{m^2 (\overline{m}^2 + lp^{\beta-1})}{N \vert D \vert p^{2 \alpha} p^{\frac{15}{8} \beta }} \right). \end{align*}
Let us first estimate the tail sum in this expression. For each $m \geq 1$, we use Deligne's bound $\lambda(n) \ll_{\varepsilon} n^{\varepsilon}$
and the classical estimate $r^{\star}(n) \ll r(n) \ll_{\varepsilon} n^{\varepsilon}$ to see that the corresponding contribution is 
\begin{align*} &\frac{1}{\varphi(p)} \frac{\omega(m)}{m} \sum_{l=1}^{p-1} 
\frac{\lambda(\overline{m}^2 + l p^{\beta-1}) r^{\star}(\overline{m}^2 + l p^{\beta-1})}{(\overline{m}^2 + l p^{\beta-1})^{\frac{1}{2}}}
V_1 \left( \frac{ m^2 (\overline{m}^2 + l p^{\beta-1})}{N \vert D \vert p^{2 \alpha} p^{\frac{15}{8} \beta}} \right) \\
&\ll_{p, \varepsilon} \frac{1}{m} \sum_{l=1}^{p-1} \frac{(\overline{m}^2 + l p^{\beta-1})^{\varepsilon}}{(\overline{m}^2 + l p^{\beta-1})^{\frac{1}{2}}} 
\frac{m (\overline{m}^2 + l p^{\beta-1})^{\frac{1}{2}}}{(N \vert D \vert p^{2 \alpha} p^{\frac{15}{8} \beta})^{\frac{1}{2}}} \\ 
&\ll_{p, \varepsilon} (N \vert D \vert p^{2 \alpha} p^{\frac{15}{8} \beta})^{- \frac{1}{2}} \varphi(p) (p^{\beta})^{\varepsilon}
= O_{p, f, D, \alpha, \varepsilon} \left( (p^{\beta})^{- \frac{15}{16} + \varepsilon} \right). \end{align*}
Since this estimate is uniform in the choice of $m$, we deduce that the contribution of the tail sum is bounded above by 
$O_{p, f, D, \alpha, \varepsilon} \left( (p^{\beta})^{- \frac{15}{16} + \varepsilon} \right)$. To estimate the main sum in this latter 
expression $D^{\star}_1(\alpha, \beta; p^{\frac{\beta}{8}})$, we consider for each integer $m \geq 1$ the corresponding contribution 
\begin{align*} \frac{\omega(m)}{m} 
\sum_{t \geq 0} \frac{\lambda(\overline{m}^2 + t p^{\beta}) r^{\star}(\overline{m}^2 + t p^{\beta})}{(\overline{m}^2 + tp^{\beta})^{\frac{1}{2}} }
V_1 \left( \frac{m^2 (\overline{m}^2 + t p^{\beta})}{N \vert D \vert p^{2 \alpha} p^{\frac{15}{8} \beta}} \right). \end{align*}
Now, observe that by the rapid decay of $V_1(y)$ for $y \rightarrow \infty$, it will suffice to estimate the contribution 
of the truncated sum determined by the condition $t p^{\beta} \leq N \vert D \vert p^{2 \alpha} p^{\frac{15}{8} \beta}$, 
equivalently $t \leq N \vert D \vert p^{2 \alpha} p^{\frac{7}{8} \beta}$. Using the moderate decay of 
$V_1(y)$ for $y \rightarrow 0$, it is easy to see via a similar argument that 
\begin{align*} &\frac{\omega(m)}{m} \sum_{t \leq N \vert D \vert p^{2 \alpha} p^{\frac{7}{8} \beta}} 
\frac{\lambda(\overline{m}^2 + t p^{\beta}) r^{\star}(\overline{m}^2 + t p^{\beta})}{(\overline{m}^2 + tp^{\beta})^{\frac{1}{2}} }
V_1 \left( \frac{m^2 (\overline{m}^2 + t p^{\beta})}{N \vert D \vert p^{2 \alpha} p^{\frac{15}{8} \beta}} \right) \\
&\ll_{\varepsilon} \frac{1}{m} \sum_{t \leq N \vert D \vert p^{2 \alpha} p^{\frac{7}{8} \beta}} 
\frac{(\overline{m}^2 + t p^{\beta})^{\varepsilon}}{(\overline{m}^2 + tp^{\beta})^{\frac{1}{2}}} 
\frac{m (\overline{m}^2 + tp^{\beta})^{\frac{1}{2}}}{(N \vert D \vert p^{2 \alpha} p^{\frac{15}{8} \beta})^{\frac{1}{2}}} \\ 
&\ll_{\varepsilon} (N \vert D \vert p^{2 \alpha} p^{\frac{15}{8} \beta})^{- \frac{1}{2}} \sum_{t \leq N \vert D \vert p^{2 \alpha} p^{\frac{7}{8} \beta}} 
(\overline{m}^2 + tp^{\beta})^{\varepsilon} \\ &\ll_{\varepsilon} (N \vert D \vert p^{2 \alpha})^{\frac{1}{2} + \varepsilon} 
p^{- \frac{15}{16} \beta} p^{\frac{7}{8} \beta} p^{\frac{15}{8} \varepsilon} 
= (N \vert D \vert p^{2 \alpha})^{\frac{1}{2} + \varepsilon} (p^{\beta})^{- \frac{1}{16} + 30 \varepsilon} 
= O_{f, D, \alpha, \varepsilon}\left( (p^{\beta})^{-\frac{1}{16} + \varepsilon} \right).  \end{align*}
Since the estimate is again seen easily to be uniform in $m$, we deduce that the corresponding sum is bounded in this way, and hence that 
\begin{align*} D_1^{\star}(\alpha, \beta; p^{\frac{\beta}{8}}) \ll_{p, \varepsilon} (N \vert D \vert p^{2 \alpha}) (p^{\beta})^{- \frac{1}{16} + 30 \varepsilon}
= O_{f, D, \alpha, p, \varepsilon} \left( (p^{\beta})^{- \frac{1}{16} + \varepsilon} \right). \end{align*} 
The result then follows after putting together this estimate with that for the contributions of $b=0$ and $a=0$ terms described above. \end{proof}

\begin{remark}[A remark on the sums $D_1(\rho, \beta; p^{\frac{\beta}{8}})$.] 

Let us now consider the sum $D_1(\rho, \beta; p^{\frac{\beta}{8}})$, which we can expand as 
\begin{align*} &D_1(\rho, \beta; p^{\frac{\beta}{8}}) \\ &= \sum_{A \in \operatorname{Pic}(\mathcal{O}_{p^{\alpha}})} 
\rho(A) \sum_{m \geq 1 \atop (m, p)=1} \frac{\omega(m)}{m} \sum_{ {n \geq 1 \atop (n, p) =1} \atop m^2 n \equiv 1 \bmod p^{\beta}} 
\frac{r_A(n) \lambda(n)}{n^{\frac{1}{2}}} V_{1}\left(\frac{ m^2 n }{ N \vert D \vert p^{2\alpha} p^{\frac{15}{8}\beta} } \right) \\ 
& - \frac{1}{\varphi(p)} \sum_{A \in \operatorname{Pic}(\mathcal{O}_{p^{\alpha}})} \rho(A) \sum_{m \geq 1 \atop (m, p)=1} \frac{\omega(m)}{m} 
\sum_{ {n \geq 1 \atop m^2 n \equiv 1 \bmod p^{\beta-1}} \atop m^2 n \not \equiv 1 \bmod p^{\beta}} 
\frac{r_A(n) \lambda(n)}{n^{\frac{1}{2}}} V_1 \left(\frac{m^2 n }{ N \vert D \vert p^{2 \alpha} p^{\frac{15}{8} \beta} } \right). \end{align*}
As before, $r_A(n)$ denotes the number of ideals of norm $n$ in $A \in \operatorname{Pic}(\mathcal{O}_{p^{\alpha}})$, 
and we can use the relation $(\ref{QO})$ to evaluate this equivalently as  
\begin{align*} &D_1(\rho, \beta; p^{\frac{\beta}{8}}) \\
&= \frac{2}{\varphi^{\star}(p^{\beta})} \sum_{\chi \bmod p^{\beta} \atop \chi(-1)=1, \operatorname{primitive}} 
\sum_{A \in \operatorname{Pic}(\mathcal{O}_{p^{\alpha}})} \rho(A) \sum_{m \geq 1} \frac{\omega \chi^2(m)}{m} 
\sum_{ n \geq 1 \atop (n, p) =1} \frac{r_A(n)\chi(n) \lambda(n)}{n^{\frac{1}{2}}} 
V_1 \left(\frac{m^2 n }{ N \vert D \vert p^{2\alpha} p^{\frac{15}{8} \beta} } \right). \end{align*}
Let us for each $A \in \operatorname{Pic}(\mathcal{O}_{p^{\alpha}})$ write $q_A = q_A(x, y)$ 
to denote the corresponding positive definite binary quadratic form. This latter expression can then be written equivalently as 
\begin{align*} &D_1(\rho, \beta; p^{\frac{\beta}{8}}) \\
&= \frac{2}{\varphi^{\star}(p^{\beta})} \sum_{\chi \operatorname{mod} p^{\beta} \atop \chi(-1)=1, \operatorname{primitive}} 
\sum_{A \in \operatorname{Pic}(\mathcal{O}_{p^{\alpha}})} \rho(A) \sum_{m \geq 1} \frac{\omega \chi^2(m)}{m} 
\frac{1}{w} \sum_{ a, b \in {\bf{Z}} \atop q_A(a, b) \neq 0 } \frac{\lambda(q_A(a, b) \chi(q_A(a, b)) }{q_A(a, b)^{\frac{1}{2}}} 
V_{1}\left(\frac{m^2 q_A(a, b) }{ N \vert D \vert p^{2 \alpha } p^{\frac{15}{8} \beta} } \right). \end{align*}
Notice that we could in principal attempt a similar argument for these sums, although the contribution 
of the fixed character $\rho$ makes it difficult a priori to use the corresponding estimate to derive nonvanishing results. 
However, we shall see later that it is possible to average the corresponding sum over primitive ring class characters $\rho$
of an exact given order (in the style of Rohrlich \cite{Ro2}) to reduce to a similar setup as we have here for the principal class. \end{remark}

\subsection{Main estimates} 

Using Propositions \ref{D1tilde} and \ref{D1}, we now derive the following estimates for the averages 
\begin{align*} H^{(0)}(\alpha, \beta) &= \frac{2}{\# \operatorname{Pic}(\mathcal{O}_{p^{\alpha}}) \varphi^{\star}(p^{\beta})} 
\sum_{\rho \in \operatorname{Pic}(\mathcal{O}_p^{\alpha})^{\vee}} \sum_{\chi \bmod p^{\beta} \atop  \chi(-1)=1
\operatorname{primitive}} L(1/2, f \times \rho \chi \circ {\bf{N}}) \end{align*} computed in Propositions \ref{haf} above. 

\begin{theorem}\label{cycharm} 

Fix integers $\alpha \geq 0$ and $\beta \geq 4$. Let us again write $h(\mathcal{O}_{p^{\alpha}}) = \# \operatorname{Pic}(\mathcal{O}_{p^{\alpha}})$ 
to lighten notation. If the pair $(f, \mathcal{W})$ is generic, then the average $H^{(0)}(\alpha, \beta)$ is estimated as 
\begin{align*} H^{(0)}(\alpha, \beta) &= \frac{2}{w} \left( 1 + \frac{\lambda(\mathfrak{D})}{\mathfrak{D}^{\frac{1}{2}}} \right) 
\cdot \frac{2}{\varphi^{\star}(p^{\beta})} \sum_{\chi \bmod p^{\beta} \atop \chi(-1) =1, \operatorname{primitive}} 
L(1, \omega\chi^2) \cdot \frac{ L(1, \operatorname{Sym^2} f \otimes \chi) }{L(2, \chi^2)} \\
&+ O_{\varepsilon} \left( (N \vert D \vert p^{2 \alpha})^{\frac{1}{2} + \varepsilon} (p^{\beta})^{- \frac{1}{16} + 30 \varepsilon} \right). \end{align*}
Moreover, the average $H^{(0)}(\alpha, \beta)$ does not vanish if $\beta \geq 4$ is sufficiently large. \end{theorem} 

\begin{proof} 

We start with the formula of Proposition \ref{haf}, taking the unbalancing parameter $Z = p^{\frac{\beta}{8}}$, and 
using a variation of the argument of Lemma \ref{haf2} to reduce to estimating the sum of leading sums 
\begin{align*} D_1(\alpha, \beta; p^{\frac{\beta}{8}}) + \widetilde{D}_1(\alpha, \beta; p^{\frac{\beta}{8}}). \end{align*} 
We then deduce the stated estimate after putting together the results of Propositions \ref{D1tilde} and \ref{D1}. 
To deduce the nonvanishing of the average, note that we can ignore the error term (which tends to zero with $\beta$). 
To estimate the leading term, let us first consider the Dirichlet series in $s \in {\bf{C}}$ (first for $\Re(s) > 1$),
\begin{align*} \frac{2}{\varphi^{\star}(p^{\beta})} \sum_{\chi \bmod p^{\beta} \atop \chi(-1), \operatorname{primitive}} 
L(s, \omega \chi^2) \cdot \frac{L(s, \operatorname{Sym^2} f \otimes \chi)}{L(2s, \chi^2)} &= 
\frac{2}{\varphi^{\star}(p^{\beta})} \sum_{\chi \bmod p^{\beta} \atop \chi(-1), \operatorname{primitive}}
\sum_{m \geq 1} \frac{\omega\chi^2(m)}{m^s} \sum_{n \geq 1} \frac{\lambda(n^2) \chi(n)}{n^s}. \end{align*}
Fix a primitive (even) Dirichlet character $\chi \bmod p^{\beta}$. 
Recall the P\'olya-Vinogradov inequality shows that for any real parameter $X \geq 1$, we have the estimate 
\begin{align}\label{PV} L(s, \omega \chi^2) &= \sum_{m \leq X} \frac{\omega \chi^2(m)}{m^s} 
+ O_D \left( (p^{\beta})^{\frac{1}{2}} \log p^{\beta} X^{-\Re(s)} \right). \end{align}
On the other hand, as explained in \cite[Lemma 4.1]{CM} (we vary their setup),
we can use the automorphy of $\operatorname{Sym}^2 f \otimes \chi$ 
to derive the following contour estimate for any $\frac{3}{2} > \Re(s) \geq 1$ and real parameter $Y \geq 1$:
\begin{align*} &\sum_{n \geq 1} \frac{\lambda(n^2) \chi(n)}{n^s} e^{- \frac{n}{Y}} 
= \int_{\Re(u) = 2} \frac{L(s + u, \operatorname{Sym}^2 f \otimes \chi)}{L(2(s + u), \chi^2)} \Gamma(u) Y^u \frac{du}{2\pi i} \\
&= \frac{L(s, \operatorname{Sym}^2 f \otimes \chi)}{L(2s, \chi^2)} + \int_{\Re(u) = (1/2 - \Re(s))} 
\frac{L(s + u, \operatorname{Sym}^2 f \otimes \chi)}{L(2(s + u), \chi^2)} \Gamma(u) Y^u \frac{du}{2\pi i}. \end{align*}
Writing $q$ to denote the conductor of $\operatorname{Sym}^2 f \otimes \chi$, and using the subconvexity bound 
\begin{align*} L(u, \operatorname{Sym}^2 f \otimes \chi) \ll_{\varepsilon} q^{\frac{1}{2} + \varepsilon} \vert u \vert^{\frac{3}{4} + \varepsilon} \end{align*}
for $\Re(u) = 1/2$, we derive the estimate  
\begin{align*} \sum_{n \geq 1} \frac{\lambda(n^2) \chi(n)}{n^s} e^{- \frac{n}{Y}} &= \frac{L(s, \operatorname{Sym}^2 f \otimes \chi)}{L(2s, \chi^2)} 
+ O_{\varepsilon}\left( (q \vert s \vert) ^{\varepsilon}  q^{\frac{1}{2}} \vert s \vert^{\frac{3}{4}} Y^{\frac{1}{2} - \Re(s)}\right). \end{align*}
Taking $Y = q^2$ (as in \cite[Lemma 4.1]{CM}) and $s = 1$, we then derive the estimate
\begin{align}\label{ib1} \frac{L(1, \operatorname{Sym}^2 f \otimes \chi)}{L(2, \chi^2)} &= 
\sum_{n \geq 1} \frac{\lambda(n^2) \chi(n)}{n} e^{- \frac{n}{q^2}} + O_{\varepsilon} \left( q^{-\frac{1}{2} + \varepsilon} \right), \end{align}
and via $\vert \lambda(n^2) \vert \ll d_2(n)$ the individual bounds 
\begin{align}\label{ib2} \sum_{n \geq 1} \frac{\lambda(n^2) \chi(n)}{n} e^{- \frac{n}{q^2}}, 
~~~~~ \frac{L(1, \operatorname{Sym}^2 f \otimes \chi)}{L(2, \chi^2)} &\ll_{\varepsilon} q^{\varepsilon}. \end{align}

Let us now consider the product of $L$-values 
\begin{align}\label{lead} L(1, \omega \chi^2) \cdot \frac{L(1, \operatorname{Sym}^2 f \otimes \chi)}{L(2, \chi^2)}
&= \left( \sum_{m \leq X} \frac{\omega \chi^2(m)}{m} + O_D \left( p^{\frac{\beta}{2}} (\log p^{\beta})X^{-1} \right)  \right) \cdot
\frac{L(1, \operatorname{Sym}^2 f \otimes \chi)}{L(2, \chi^2)},\end{align}
where we have used the P\'olya-Vinogradov inquality $(\ref{PV})$ at $s=1$ in to estimate the factor $L(1, \omega \chi^2)$. 
Note that since $\operatorname{Sym}^2 f$ determines some automorphic form on $\operatorname{GL}_3$, 
the conductor $q = O_f(p^{3 \beta})$ is of size $p^{3 \beta}$. Using $(\ref{ib1})$ to estimate the first term in the expression $(\ref{lead})$, we find that 
\begin{align*} \sum_{m \leq X} \frac{\omega \chi^2(m)}{m} \frac{L(1, \operatorname{Sym}^2 f \otimes \chi)}{L(2, \chi^2)} &= 
\sum_{m \leq X} \frac{\omega \chi^2(m)}{m} \left( \sum_{n \geq 1} \frac{\lambda(n^2)\chi(n)}{n} e^{- \frac{n}{q^2}} 
+ O_{\varepsilon}(q^{-\frac{1}{2} + \varepsilon}) \right). \end{align*}
Let us now take $X = (p^{\beta})^{\frac{1}{2} + \eta}$ for some small $\varepsilon < \eta < 1/2$, so that this latter sum is estimated as 
\begin{align*} \sum_{m \leq X} \frac{\omega \chi^2(m)}{m} \frac{L(1, \operatorname{Sym}^2 f \otimes \chi)}{L(2, \chi^2)}  &=
\sum_{m \leq X} \frac{\omega \chi^2(m)}{m} \sum_{n \geq 1} \frac{\lambda(n^2)\chi(n)}{n} e^{- \frac{n}{q^2}} 
+ O_{\varepsilon}\left( \log ((p^{\beta})^{\frac{1}{2} + \eta})  q^{-\frac{1}{2} + \varepsilon} \right). \end{align*}
Note that by using $(\ref{ib1})$ again, we can also estimate the first term in $(\ref{lead})$ as  
\begin{align*} \sum_{m \leq (p^{\beta})^{\frac{1}{2} + \eta}} \frac{\omega \chi^2(m)}{m} \frac{L(1, \operatorname{Sym}^2 f \otimes \chi)}{L(2, \chi^2)} 
&= \sum_{m \leq (p^{\beta})^{\frac{1}{2} + \eta}} \frac{\omega \chi^2(m)}{m} \sum_{n \geq 1} \frac{\lambda(n^2) \chi(n)}{n} 
+ O_{\varepsilon}\left( \log ((p^{\beta})^{\frac{1}{2} + \eta} ) q^{-\frac{1}{2} + \varepsilon} \right). \end{align*} 
To estimate the second term in the expression $(\ref{lead})$, we simply use the contour bound $(\ref{ib2})$ 
for the symmetric square $L$-value at $s=1$ to obtain an error of $O_{D, \varepsilon}\left( (p^{\beta})^{\varepsilon - \eta} \right)$. 
In this way, we derive the preliminary estimate 
\begin{align}\label{prelim} L(1, \omega \chi^2) \cdot \frac{L(1, \operatorname{Sym}^2 f \otimes \chi)}{L(2, \chi^2)} 
&= \sum_{m \leq (p^{\beta})^{\frac{1}{2} + \eta}} \frac{\omega \chi^2(m)}{m} 
\sum_{n \geq 1} \frac{\lambda(n^2) \chi(n)}{n} e^{-\frac{n}{q^2}} +O_{D, \varepsilon}((p^{\beta})^{-\eta + \varepsilon}). \end{align}
Taking the weighted sum over primitive even Dirichlet characters $\chi \bmod p^{\beta}$ in the latter estimate, we derive 
\begin{align*} \frac{2}{\varphi^{\star}(p^{\beta})} &\sum_{\chi \bmod p^{\beta} \atop \chi(-1)= 1, \operatorname{primitive}} 
L(1, \omega\chi^2) \cdot \frac{ L(1, \operatorname{Sym^2} f \otimes \chi) }{L(2, \chi^2)} 
\\&= \frac{2}{\varphi^{\star}(p^{\beta})} \sum_{\chi \bmod p^{\beta} \atop \chi(-1) = 1 \operatorname{primitive}} \sum_{m \leq (p^{\beta})^{\frac{1}{2} + \eta}} 
\frac{\omega\chi^2(m)}{m} \sum_{n \geq 1} \frac{\lambda(n^2) \chi(n)}{n} e^{- \frac{n}{q^2}}+ O_{D, \varepsilon}\left( (p^{\beta})^{-\eta + \varepsilon} \right). \end{align*}

Let us now consider the leading term in this latter expression, which after unwinding via $(\ref{QO})$ equals 
\begin{align*} \frac{2}{\varphi^{\star}(p^{\beta})} &\sum_{\chi \bmod p^{\beta} \atop \chi(-1)=1, \operatorname{primitive}} 
\sum_{1 \leq m \leq (p^{\beta})^{\frac{1}{2} + \eta}} \frac{ \omega \chi^2(m) }{ m } \sum_{n \geq 1} \frac{\lambda(n^2) \chi(n)}{n} e^{- \frac{n}{q^2}}\\
&= \sum_{n \geq 1} \frac{\lambda(n^2)}{n} e^{- \frac{n}{q^2}} 
\left( \sum_{m \leq (p^{\beta})^{\frac{1}{2} + \eta} \atop m \equiv \pm \overline{n}^{\frac{1}{2}} \bmod p^{\beta}} \frac{\omega(m)}{m} - \frac{1}{\varphi(p)} 
\sum_{ {m \leq (p^{\beta})^{\frac{1}{2} + \eta} \atop m^2 \equiv \pm \overline{n} \bmod p^{\beta-1}} \atop m \not\equiv \pm \overline{n}^{\frac{1}{2}} \bmod p^{\beta}} 
\frac{\omega(m)}{m} \right). \end{align*} 
Here, $\overline{n}^{\frac{1}{2}}$ denotes a square root of $\overline{n} \bmod p^{\beta}$ (when it exists). 
Now, taking the length $X = (p^{\beta})^{\frac{1}{2} + \eta}$ as we do, the congruence in the definition of the second $m$-sum 
is never satisfied, and there is only one solution to each congruence in the definition of the first $m$-sum. 
Hence, it remains to estimate the contribution of 
\begin{align}\label{RE} \sum_{n \geq 1} \frac{\lambda(n^2)}{n} e^{-\frac{n}{q^2}} 
\sum_{m \leq (p^{\beta})^{\frac{1}{2} + \eta} \atop m \equiv \pm \overline{n}^{\frac{1}{2}} \bmod p^{\beta}} \frac{\omega(m)}{m}
&= \sum_{n \geq 1} \frac{\omega(\overline{n}^{\frac{1}{2}})}{\overline{n}^{\frac{1}{2}}} \frac{\lambda(n^2)}{n} e^{- \frac{n}{q^2}}.\end{align} 
It is easy to see that $n=1$ term in $(\ref{RE})$ contributes $e^{-\frac{1}{q^2}} \gg 1$. To estimate remaining sum over $n \geq 2$, 
we partition it into residue classes modulo $p^{\beta}$ of integers $1 \leq u \leq (p^{\beta})^{\frac{1}{2} + \eta}$ 
which (via Hensel's lemma) are quadratic residues modulo $p$, then expand out in terms of progressions over integers $t \geq 0$:
\begin{align*} \sum_{n \geq 2}  \frac{\omega(\overline{n}^{\frac{1}{2}})}{ \overline{n}^{\frac{1}{2}} }  \frac{\lambda(n^2)}{n} e^{- \frac{n}{q^2}}
&= \sum_{2 \leq u \leq (p^{\beta})^{\frac{1}{2} + \eta} \atop \left(\frac{u}{p} \right)_p = 1} \frac{\omega(u)}{u} 
\sum_{t \geq 0} \frac{\lambda(\overline{u}^4 + tp^{\beta})}{(\overline{u}^2 + tp^{\beta})} e^{- \frac{(\overline{u}^2 + t p^{\beta})}{q^2}} \\
&=  \sum_{2 \leq u \leq (p^{\beta})^{\frac{1}{2} + \eta} \atop \left(\frac{u}{p} \right)_p = 1} \frac{\omega(u)}{u} e^{- \frac{\overline{u}^2}{q^2}}
\sum_{t \geq 0} \frac{\lambda(\overline{u}^4 + tp^{\beta})}{(\overline{u}^2 + tp^{\beta})} e^{- \frac{tp^{\beta}}{q^2}}. \end{align*} 
Let us first consider the sum over integers  $t \geq 1$ terms in this latter expression, 
which by Deligne's theorem $\lambda(n) \ll_{\varepsilon} n^{\varepsilon}$ 
together with the basic lower bound $e^{\frac{n}{q^2}} \gg n e^{\frac{1}{q^2}}$ can be bounded above in modulus by 
\begin{align*} \sum_{2 \leq u \leq (p^{\beta})^{\frac{1}{2} + \eta} \atop \left(\frac{u}{p} \right)_p = 1} &\frac{\omega(u)}{u} e^{- \frac{\overline{u}^2}{q^2}} 
\sum_{t \geq 1} \frac{\lambda(\overline{u}^4 + tp^{\beta})}{(\overline{u}^2 + tp^{\beta})} e^{- \frac{tp^{\beta}}{q^2}} \ll_{\varepsilon} 
\sum_{2 \leq u \leq (p^{\beta})^{\frac{1}{2} + \eta} \atop \left(\frac{u}{p} \right)_p = 1} \frac{1}{u} e^{-\frac{\overline{u}^2}{q^2}}  
\sum_{t \geq 1} \frac{(\overline{u}^4 + tp^{\beta})^{\varepsilon}}{(\overline{u}^2 + tp^{\beta})} e^{- \frac{t p^{\beta}}{q^2}} \\ 
&\ll e^{-\frac{1}{q^2}} \sum_{2 \leq u \leq (p^{\beta})^{\frac{1}{2} + \eta} \atop \left(\frac{u}{p} \right)_p = 1} \frac{1}{u \overline{u}^2} 
\sum_{t \geq 1} \frac{1}{(tp^{\beta})^{2 - \varepsilon}} = \left( \frac{1}{p^{\beta}} \right)^{2 - \varepsilon} e^{-\frac{1}{q^2}} 
\sum_{2 \leq u \leq (p^{\beta})^{\frac{1}{2} + \eta} \atop \left(\frac{u}{p} \right)_p = 1} \frac{1}{u \overline{u}^2} \sum_{t \geq 1}  \frac{1}{t^{2 - \varepsilon}} \\ 
&= O_{\varepsilon} \left( e^{-\frac{1}{q^2}} \left( p^{\beta} \right)^{ \eta + \varepsilon - \frac{3}{2} } \right). \end{align*} 
Let us now consider the remaining contributions from $t=0$,
\begin{align}\label{leading} \sum_{2 \leq u \leq (p^{\beta})^{\frac{1}{2} + \eta} \atop \left(\frac{u}{p} \right)_p = 1} 
\frac{\omega(u)}{u} \frac{\lambda(\overline{u}^4)}{\overline{u}^{2}} e^{- \frac{\overline{u}^2}{q^2}}. \end{align} 
Observe that for each term $u \geq 2$ in the latter sum, we have that  $\overline{u} u = 1 + l p^{\beta}$ for some integer $l \geq 1$. 
Hence, for each such pair in the sum, we have $\overline{u} u \geq 1 + p^{\beta}$. Using Deligne's bound again, it is then easy to see that
\begin{align*} \sum_{2 \leq u \leq (p^{\beta})^{\frac{1}{2} + \eta} \atop \left(\frac{u}{p} \right)_p = 1}  
\frac{\omega(u)}{u}  \frac{\lambda(\overline{u}^4)}{\overline{u}^{2}} e^{- \frac{ \overline{u}^2 }{q^2}}
&\ll_{\varepsilon} e^{-\frac{1}{q^2}}
\sum_{2 \leq u \leq (p^{\beta})^{\frac{1}{2} + \eta} \atop \left(\frac{u}{p} \right)_p = 1} \frac{1}{ \overline{u}^{3-\varepsilon}(1 + p^{\beta})} \\
&= e^{-\frac{1}{q^2}} \frac{1}{(1 + p^{\beta})} \sum_{2 \leq u \leq (p^{\beta})^{\frac{1}{2} + \eta} \atop \left(\frac{u}{p} \right)_p = 1} 
\frac{1}{\overline{u}^{3-\varepsilon}} = 
O_{f, p, \varepsilon}\left(e^{-\frac{1}{q^2}} (p^{\beta})^{\eta + \varepsilon - \frac{1}{2}}  \right). \end{align*}
In this way, we deduce that the leading term $(\ref{leading})$ is estimated as 
$e^{-\frac{1}{q^2}} + O_{f, p, \varepsilon}\left( e^{-\frac{1}{q^2}}(p^{\beta})^{\eta + \varepsilon - \frac{1}{2}}  \right)$, and hence  
\begin{align*} \frac{2}{\varphi^{\star}(p^{\beta})} &\sum_{\chi \bmod p^{\beta} \atop \chi(-1) = 1, \operatorname{primitive}} 
L(1, \omega\chi^2) \cdot \frac{ L(1, \operatorname{Sym^2} f \otimes \chi) }{L(2, \chi^2)} 
=  e^{-\frac{1}{q^2}} + O_{f, p, \varepsilon}\left( e^{-\frac{1}{q^2}}(p^{\beta})^{\eta + \varepsilon - \frac{1}{2}}  \right) 
+ O_{D, \varepsilon}\left( (p^{\beta})^{-\eta + \varepsilon} \right). \end{align*}
Is then easy to see (as $q = O_f(p^{3\beta})$) that this sum of $L$-values converges with $\beta$ to $1$, 
and hence that the average $H^{(0)}(\alpha, \beta)$ converges to the nonvanishing constant for 
$1 + \lambda(\mathfrak{D}) \mathfrak{D}^{- \frac{1}{2}}$ with $\beta$. \end{proof} 

\section{Galois conjugate ring class characters}

We now derive the following refinement of Theorems \ref{SDave} and Theorem \ref{cycharm} for Galois conjugate ring class characters, 
or more simply ring class characters of the same exact order, 
analogous to the settings considered by Rohrlich \cite{Ro2}, \cite{Ro} Vatsal \cite{Va} and Cornut \cite{Cor}.

Fix an integer $\alpha \geq 1$, as well as a primitive ring class character $\rho$ of conductor $p^{\alpha}$.
Let us lighten notation in writing $G(\alpha) = \operatorname{Pic}(\mathcal{O}_{p^{\alpha}})$ to denote the class group of the order $\mathcal{O}_{p^{\alpha}}$.
Given an integer $d \geq 1$ dividing the class number $h(\mathcal{O}_{p^{\alpha}}) = \# G(\alpha)$, let us also write 
$G(\alpha)^d = \left\lbrace g^d : g \in G(\alpha) \right\rbrace$ to denote the subgroup of $d$-th powers in $G(\alpha)$.
Writing $G(\alpha)^{\vee}$ to denote the characters of $G(\alpha)$, the characters $\rho \in G(\alpha)^{\vee}$ of exact 
order $d$ are precisely those for which $\rho^d = {\bf{1}}$, where ${\bf{1}}$ denotes the trivial character of $G(\alpha)$. 
Note that there are precisely $\varphi(d)$ many such characters, and also that such characters factor through the quotient $G(\alpha) / G(\alpha)^d$. 
Now, it is well-known and classical (see e.g.~\cite[$\S 3.1$]{IK}) that such characters detect $d$-th powers via the following orthogonality relation:
\begin{align}\label{QOGC} \sum_{\rho \in G(\alpha)^{\vee} \atop \rho^d \equiv {\bf{1}}} \rho(A) 
&= \begin{cases} [G(\alpha): G(\alpha)^d] &\text{ if $A \in G(\alpha)^d$} \\ 0 &\text{ if $A \notin G(\alpha)^d$}. \end{cases} \end{align}

Given an integer $1 \leq x \leq \alpha$, let us now define (in either case $k \in \lbrace 0, 1 \rbrace$ on the generic root number) the 
corresponding average over ring class characters $\rho \in \operatorname{Pic}(\mathcal{O}_{p^{\alpha}})$ of exact order $p^x$:
\begin{align}\label{Ga}\Gamma^{(k)}(\alpha(x)) 
&= \frac{1}{[G(\alpha): G(\alpha)^{p^x}]} \sum_{\rho \in G(\alpha)^{\vee} \atop \rho^{p^x} = {\bf{1}}} L^{(k)}(1/2, f \times \rho). \end{align} 
Given an integer $\beta \geq 2$, let us also define the corresponding double average of central values  
\begin{align}\label{Gab} \Gamma(\alpha(x), \beta) 
&= \frac{1}{[G(\alpha): G(\alpha)^{p^x}]} \frac{2}{\varphi^{\star}(p^{\beta})} 
\sum_{\rho \in G(\alpha)^{\vee} \atop \rho^{p^x} = {\bf{1}}} 
\sum_{\chi \bmod p^{\beta} \atop \chi(-1)=1, \operatorname{primitive}} L(1/2, f \times \rho \chi \circ {\bf{N}}). \end{align} 
Here, both averages are defined with the understanding that the ring class characters $\rho$ of exact order $p^x$
are primitive of conductor $\alpha$, i.e.~so that the choice of $x$ is constrained by the choice of $\alpha$. Also, since
the index $[G(\alpha): G(\alpha)^{p^x}]$ is equivalent to the number of primitive ring class characters of $G(\alpha)$ 
of exact order $p^x$ ($=\varphi(p^x)$), we argue that $\# G(\alpha)^{p^x} = \#\operatorname{Pic}(\mathcal{O}_{p^{\alpha}})/\varphi(p^x)$, 
where $\#\operatorname{Pic}(\mathcal{O}_{p^{\alpha}})$ can be described by Dedekind's class number formula $(\ref{Dedekind})$. 

\begin{lemma}\label{GAF} 

Let $\alpha \geq 1$ and $1 \leq x \leq \alpha$ and $\beta \geq 2$ be integers. 
We have the following formulae for the averages $\Gamma^{(k)}(\alpha(x))$ and $\Gamma(\alpha(x), \beta)$,
given in terms of the Dirichlet series expansion $(\ref{integralDirichlet})$: We have the self-dual average formula
\begin{align*}\Gamma^{(k)}(\alpha(x)) = 2 \sum_{A \in G(\alpha) \atop A \in G(\alpha)^{p^x}}
\sum_{m \geq 1} \frac{\omega(m)}{m} \sum_{n \geq 1 \atop (n, p)=1} \frac{\lambda(n) r_A(n)}{n^{\frac{1}{2}}} 
V_{k+1} \left(\frac{m^2 n}{N \vert D \vert p^{2(\alpha + \beta)}} \right), \end{align*}
and for any choice of real parameter $Z>0$ the non self-dual average formula
\begin{align*} \Gamma(\alpha(x), \beta) &= \sum_{A \in G(\alpha) \atop A \in G(\alpha)^{p^x}} 
\left( D_A(\alpha, \beta; Z) + \widetilde{D}_A (\alpha, \beta; Z)\right), \end{align*}
where 
\begin{align*} D_A(\alpha, \beta; Z) = \sum_{m \geq 1} \frac{\omega(m)}{m} &\sum_{n \geq 1 \atop m^2 n \equiv \pm 1 \mod p^{\beta}} 
\frac{\lambda(n) r_A(n)}{n^{\frac{1}{2}}} V_1 \left( \frac{Z m^2 n}{N \vert D \vert p^{2(\alpha + \beta)}} \right) \\ 
&- \frac{1}{\varphi(p)} \sum_{m \geq 1} \frac{\omega(m)}{m} 
\sum_{ {n \geq 1 \atop m^2 n \equiv \pm 1 \bmod p^{\beta-1}} \atop m^2n \not\equiv \pm 1 \bmod p^{\beta}  } 
\frac{\lambda(n) r_A(n)}{n^{\frac{1}{2}}} V_1 \left( \frac{Z m^2 n}{N \vert D \vert p^{2(\alpha + \beta)}}\right) \end{align*}
and 
\begin{align*} \widetilde{D}_A(\alpha, \beta; Z) &= \frac{\omega(N) \tau(\omega)^4}{(\vert D \vert p^{\beta})^{\frac{1}{2}}}  
\frac{p}{\varphi(p)} \sum_{m \geq 1} \frac{\omega(m)}{m} 
\sum_{n \geq 1 \atop (n, p)=1} \frac{\lambda(n)r_A(n)}{n^{\frac{1}{2}}} V_1 \left( \frac{m^2 n}{Z N \vert D \vert p^{2(\alpha + \beta)}} \right) 
\operatorname{Kl}_4( \pm (m^2 n \overline{N}^2)^{\frac{1}{2}} \overline{D}, p^{\beta}) . \end{align*}
Here (again), we write $r_A(n)$ to denote the number of ideals in the class $A \in G(\alpha) = \operatorname{Pic}(\mathcal{O}_{p^{\alpha}})$ 
of norm equal to $n$, and also $\operatorname{Kl}_4(\pm c, p^{\beta}) = \operatorname{Kl}_4(c, p^{\beta}) + \operatorname{Kl}_4(-c, p^{\beta})$ 
for $c$ an integer prime to $p$ to lighten notation.

\end{lemma}

\begin{proof} 

To compute the average $\Gamma^{(k)}(\alpha(x))$, we start with the fact that each of the values in the sum $(\ref{Ga})$ 
can be expressed via the approximate functional equation of Proposition \ref{cvformula} (using the expansion $(\ref{integralDirichlet})$) as 
\begin{align*} L^{(k)}(1/2, f \times \rho) 
&= 2 \sum_{m \geq 1} \frac{\omega(m)}{m} \sum_{n \geq 1 \atop (n, p)=1} \frac{\lambda(n) c_{\rho}(n)}{n^{\frac{1}{2}}}
V_{k+1} \left( \frac{m^2 n}{N \vert D \vert p^{2(\alpha + \beta)}} \right) \\ &= 
2 \sum_{m \geq 1} \frac{\omega(m)}{m} \sum_{n \geq 1 \atop (n, p)=1} \frac{\lambda(n) r_A(n)}{n^{\frac{1}{2}}}
V_{k+1} \left( \frac{m^2 n}{N \vert D \vert p^{2(\alpha + \beta)}} \right)  \sum_{A \in G(\alpha)} \rho(A). \end{align*}
Notice that the conductor $p^{\alpha}$ does not change in the sum $(\ref{Ga})$, and so there is no need to use partial summation
in taking the sum over ring class characters $\rho \in G(\alpha)$ of exact order $p^x$. Hence, we may simply use $(\ref{QOGC})$ 
to evaluate the inner $A$-sum in this latter expression, from which we derive the stated formula.  

To compute the average $\Gamma(\alpha(x), \beta)$, observe that we have (in the setup of Proposition \ref{CAF}) the relation 
\begin{align*} \Gamma(\alpha(x), \beta) 
&= \sum_{\rho \in G(\alpha)^{\vee} \atop \rho^{p^x} = {\bf{1}}} \left( D_1(\rho, \beta; Z) + \widetilde{D}_1(\rho, \beta; Z) \right).\end{align*}
The stated average formula is then deduced in a similar way, opening up the coefficients $c_{\rho}(n)$ in the sums $D_1(\rho, \beta; Z)$
and $\widetilde{D}_1(\rho, \beta; Z)$ (as defined in Proposition \ref{CAF} above), and using the relation $(\ref{QOGC})$. \end{proof}

Let us now describe the estimates we can derive for these Galois averages, using the same arguments and calculations as given for 
Theorems \ref{SDave} and \ref{cycharm} above. We start with the self-dual averages $\Gamma^{(k)}(\alpha(x))$. Here, we apply the 
argument of Theorem \ref{SDerror} (ii) above to each class $A \in G(\alpha)^{p^x}$ to derive the following result.

\begin{theorem}\label{SDGA} 

Let $\alpha \geq 1$ and $1 \leq x \leq \alpha$ be integers. 
We have the following estimates for the averages $\Gamma^{(k)}(\alpha(x))$ in either case $k \in \lbrace 0, 1 \rbrace$ on the generic root number: \\

\begin{itemize}

\item[(i)] If $k= 0$, then for some constant $\eta_0 >0$ we have the estimate
\begin{align*} \Gamma^{(0)}(\alpha(x))&= 
\frac{4}{w} \cdot L(1, \omega) \cdot \frac{L^{(p^{\alpha})}(1, \operatorname{Sym^2} f)}{\zeta^{(p^{\alpha})}(2)} 
+ O \left( (N \vert D \vert p^{2 \alpha})^{-\eta_0} \right). \end{align*}

\item[(ii)] If $k=1$, then for some constant $\eta_1 >0$ we have 
\begin{align*} \Gamma^{(1)}(\alpha(x))&= 
\frac{4}{w} \cdot L(1, \omega) \cdot \frac{L^{(p^{\alpha})}(1, \operatorname{Sym^2} f )}{\zeta^{(p^{\alpha})}(2)} 
\left[ \frac{1}{2}\log(N\vert D \vert p^{2 \alpha}) + \frac{L'}{L}(1, \omega)  \right. \\  
& \left. + \frac{L'^{(p^{\alpha})}}{L^{(p^{\alpha})}}(1, \operatorname{Sym^2} f) - \frac{\zeta'^{(p^{\alpha})} }{\zeta^{(p^{\alpha})}}(2) 
- \gamma- \log 2 \pi\right]  + O((N \vert D \vert p^{2 \alpha})^{-\eta_1} ). \end{align*}
\end{itemize}

In particular, in either case $k \in \lbrace 0, 1 \rbrace$ on the generic root number, the averages $\Gamma^{(k)}(\alpha(x))$ 
converge to nonzero constants with the quantity $N \vert D \vert p^{2 \alpha}$.

\end{theorem}

\begin{proof} 

Let us for each class $A \in G(\alpha)^{p^x}$ write $q_A$ to denote the corresponding binary quadratic form. 
Let us then write $\theta_{q_A}(z)$ to denote the binary theta series associated to $q_A$, 
viewed as an automorphic form on $\operatorname{GL}_2({\bf{A}}_{\bf{Q}})$ (as in the proof of Theorem \ref{SDerror} above).
Hence, for $x \in {\bf{A}}_{\bf{Q}}$ an adele, and $y = y_f y_{\infty} \in {\bf{A}}_{\bf{Q}}^{\times}$ 
an idele with finite component $y_f \in {\bf{A}}_{ {\bf{Q}}, f}^{\times}$
and archimedean component $y_{\infty} \in {\bf{R}}^{\times}$, $\theta_{q_A}$ has the Fourier-Whittaker expansion 
\begin{align*} \theta_{q_A} \left( \left(\begin{array} {cc} y &  x \\ 0 & 1 \end{array}\right) \right) 
&= \vert y \vert^{\frac{1}{2}} \frac{1}{w} \sum_{\gamma_1, \gamma_2 \in {\bf{Z}} } e \left( q_A(\gamma_1, \gamma_2)(x + i y) \right). \end{align*}
Replacing the binary quadratic form $q_1$ associated to the trivial class by $q_A$ in the argument of Theorem \ref{SDerror} above, 
we see that for $Y >0$ any real parameter, we have the same convergent limit 
\begin{align*} \lim_{Y \rightarrow \infty} Y^{\frac{1}{2}} \sum_{m \geq 1} \frac{\omega (m) }{m^2} \rho_{\phi \overline{\theta}_{q_A}, 0} \left( \frac{m^2}{Y} \right)
&= \lim_{Y \rightarrow \infty} \sum_{m \geq 1} \frac{\omega(m)}{m} \frac{1}{w} \sum_{a, b \in {\bf{Z}} \atop q_A(a, b) \neq 0}
\frac{\lambda(q_A(a, b))}{q_A(a, b)^{\frac{1}{2}}} V_{k+1} \left( \frac{m^2 q_A(a,b)}{Y} \right) \\
&= L(1, \omega) \cdot \mathfrak{L}^{(k)}(1, f). \end{align*} 
Here, $\rho_{\phi \overline{\theta}_{q_A}, 0}$ denotes the constant coefficient of $\phi \overline{\theta}_{q_A}$, 
which viewed as a function on $y = y_f y_{\infty} \in {\bf{A}}_{\bf{Q}}^{\times}$ is 
\begin{align*} \rho_{\phi \overline{\theta}_{q_A}, 0}(y) &= \int_{ {\bf{A}}_{\bf{Q}} / {\bf{Q}}} \phi \overline{\theta}_{q_A} 
\left( \left(\begin{array} {cc} y &  x \\ 0 & 1 \end{array}\right) \right) dx, \end{align*}
and the product of $L$-values $\mathfrak{L}^{(k)}(1, f)$ has the same definition as in Theorem \ref{SDerror} (ii) above. 
The claimed estimates are then easy to deduce after considering the limiting behaviour of the sum over classes $A \in G(\alpha)^{p^x}$,
divided by the index $[G(\alpha): G(\alpha)^{p^x}]$. \end{proof} 

Note that after interpreting the weighted averages $\Gamma^{(k)}(\alpha(x))$ over ring class characters of exact order 
as Galois averages (in the sense of \cite{Sh} and \cite{GZ}, cf.~\cite{Ro2}), we derive completely analytic proofs of the 
theorems of Vatsal \cite{Va} and Cornut \cite{Cor} respectively (in the style of Rohrlich \cite{Ro2}), as stated in the introduction. 
Let us now consider the weighted double average $\Gamma^{(k)}(\alpha(x), \beta)$ over primitive Dirichlet characters $\chi \bmod p^{\beta}$ with $\beta \geq 4$
(so that we can evaluate hyper-Kloosterman sums in the style of Sali\'e).

\begin{theorem} \label{NSDGA} 

Fix integers $\alpha \geq 1$, $1 \leq x \leq \alpha$, and $\beta \geq 4$. We have the estimate
\begin{align*} \Gamma(\alpha(x), \beta) &= \frac{2}{w}  \prod_{A \in G(\alpha)^{p^x}} \left( 1 + \frac{\lambda(q_A(0, 1))}{q_A(0, 1)^{\frac{1}{2}}} \right) 
\cdot \frac{2}{\varphi^{\star}(p^{\beta})} 
\sum_{\chi \bmod p^{\beta} \atop \chi(-1) = 1, \operatorname{primitive}} L(1, \omega \chi^2)
\cdot \frac{ L(1, \operatorname{Sym^2} f \otimes \chi^2) }{L(2, \chi^2)} \\
&+ O_{p, \varepsilon}\left( \varphi(p^x) (N \vert D \vert p^{2 \alpha})^{\frac{1}{2} + \varepsilon} (p^{\beta})^{- \frac{1}{16} + 30 \varepsilon} \right). \end{align*}
In particular, the average $\Gamma(\alpha(x), \beta)$ converges to a nonzero constant for $\beta \geq 4$ sufficiently large. 
Hence, if $\rho$ is any primitive ring class character of conductor $p^{\alpha}$ (for $\alpha \geq 1$) 
and exact order $p^x$, then for each sufficiently larger integer $\beta \gg x$ there exists a primitive Dirichlet character 
$\chi \operatorname{mod} p^{\beta}$ such that $L(1/2, f \times \rho \chi \circ {\bf{N}}) \neq 0$.\end{theorem} 

\begin{proof} We start with the expression of Proposition \ref{GAF}, taking $Z = p^{\frac{\beta}{8}}$, 
and opening up the counting functions $r_A(n)$ for each class $A \in G(\alpha)^{p^x}$. 
We then apply the arguments of Theorem \ref{D1tilde}, Proposition \ref{D1}, and Theorem \ref{cycharm} 
to estimate each of the corresponding $A$-sums $D_A(\alpha, \beta; p^{\frac{\beta}{8}})$ and $D_A(\alpha, \beta; p^{\frac{\beta}{8}})$. \end{proof}

\end{document}